\documentclass[11pt]{amsart}
\usepackage[T1]{fontenc}
\usepackage[utf8]{inputenc}
\usepackage[
  backend=biber,
  style=numeric,
  sorting=none,
  giveninits=false,
  doi=true,
  isbn=true,
  url=true,
  eprint=true
]{biblatex}
\usepackage{lmodern}
\usepackage{microtype}
\usepackage{tabularx}
\usepackage{array}
\usepackage{amsmath,amssymb,amsfonts,amsthm}
\usepackage{mathtools}
\usepackage{enumitem}
\usepackage{tikz-cd}
\usepackage{geometry}
\usepackage{float}
\usepackage[colorlinks=true,linkcolor=blue,citecolor=blue,urlcolor=blue]{hyperref}
\usepackage[nameinlink,noabbrev]{cleveref}

\hypersetup{
  pdftitle={From Finite-Node Conifold Geometry to BPS Structures II: Functorial Incidence and Quiver Assembly},
  pdfauthor={Abdul Rahman}
}
\numberwithin{table}{section}

\numberwithin{equation}{section}

\theoremstyle{plain}
\newtheorem{theorem}{Theorem}[section]
\newtheorem{proposition}[theorem]{Proposition}
\newtheorem{lemma}[theorem]{Lemma}
\newtheorem{corollary}[theorem]{Corollary}

\theoremstyle{definition}
\newtheorem{definition}[theorem]{Definition}

\theoremstyle{remark}
\newtheorem{remark}{Remark}[section]

\makeatletter



\makeatother

\newcommand{\Q}{\mathbb{Q}}

\newcommand{\Perv}{\mathrm{Perv}}

\newcommand{\Ext}{\mathrm{Ext}}

\newcommand{\rat}{\mathrm{rat}}

\title[From Finite-Node Conifold Geometry to BPS Structures II]{From Finite-Node Conifold Geometry to BPS Structures II: Functorial Incidence and Quiver Assembly}
\author{Abdul Rahman}
\thanks{Email: arahman@alum.howard.edu}
\subjclass[2020]{14D06, 32S30, 14F43, 18G80}
\keywords{Conifold degeneration, perverse sheaves, mixed Hodge modules, perverse schobers, incidence, quiver assembly}

\begin{document}

\begin{abstract}
In previous work, we extracted the intrinsic finite algebraic state data of a finite-node conifold degeneration in the form $A_\Sigma := (V_\Sigma,E_\Sigma,c_\Sigma)$, where $V_\Sigma$ is the finite node-indexed vertex set, $E_\Sigma$ is the nodewise coupling space, and $c_\Sigma$ is the coefficient vector of the corrected global extension class. The purpose of the present paper is to construct the corresponding interaction and incidence layer. Starting from the finite-node schober package $S_\Sigma := (\mathcal C_{\mathrm{bulk}},\{\mathcal C_{p_k}\}_{k=1}^r,\{\Phi_k,\Psi_k\}_{k=1}^r,Sh(S_\Sigma))$, we define the extended vertex set $V_\Sigma^{\mathrm{ext}} := V_\Sigma \sqcup \{v_{\mathrm{bulk}}\}$, the functorial coupling relation determined by the attachment functors, the resulting functorial incidence package $\mathfrak{I}_\Sigma := (V_\Sigma^{\mathrm{ext}},\rightsquigarrow_\Sigma)$, and its canonical binary decategorification $\mathcal I_\Sigma := (V_\Sigma^{\mathrm{ext}},I_\Sigma)$. From these data we assemble the finite quiver-theoretic package $\mathfrak Q_\Sigma := (V_\Sigma,E_\Sigma,c_\Sigma,\mathcal F_\Sigma,I_\Sigma)$, where $\mathcal F_\Sigma := \{(\Phi_k,\Psi_k)\}_{k=1}^r$ is the functorial coupling datum. We prove that this package is canonically determined by the finite-node schober datum, compatible with the corrected perverse extension and its mixed-Hodge-module refinement, and invariant under equivalence of finite-node schober realizations. This yields the interaction and incidence layer required for later graded interaction, stability, BPS, and wall-crossing structures.
\end{abstract}
\maketitle
\tableofcontents
\bigskip

\section{Introduction}

Let $\pi:X\to\Delta$ be a finite-node conifold degeneration whose central fiber $X_0$ has ordinary double points (ODP) $\Sigma=\{p_1,\dots,p_r\}\subset X_0$. In earlier work \cite{RahmanQuiverDataI}, we extracted from this degeneration the intrinsic finite algebraic state-data package $A_\Sigma:=(V_\Sigma,E_\Sigma,c_\Sigma)$, where $V_\Sigma=\{v_1,\dots,v_r\}$ is the finite vertex set indexed by the node set $\Sigma$, $E_\Sigma=\Ext^1_{\Perv(X_0;\Q)}(Q_\Sigma,IC_{X_0})$ is the nodewise coupling space, and $c_\Sigma=(c_1,\dots,c_r)\in\Q^r$ is the coefficient vector of the corrected global extension class. More precisely, the corrected finite-node perverse extension
\[
0\to IC_{X_0}\to \mathcal P\to Q_\Sigma\to 0,
\qquad
Q_\Sigma:=\bigoplus_{k=1}^r i_{k*}\Q_{\{p_k\}},
\]
together with the decomposition $E_\Sigma\cong\bigoplus_{k=1}^r \Q e_k$ and the expansion $[\mathcal P]_{\mathrm{perv}}=\sum_{k=1}^r c_k e_k$, determines the package $A_\Sigma$ canonically; see \cite{RahmanQuiverDataI,RahmanPerverseNearbyCycles,RahmanMixedHodgeModules}.

The purpose of the present paper is to construct the corresponding interaction and incidence layer. The key new input is the finite-node schober realization. By \cite{RahmanMultiNodeSchoberPaper}, the same finite-node degeneration determines a schober package $S_\Sigma:=\bigl(\mathcal C_{\mathrm{bulk}},\{\mathcal C_{p_k}\}_{k=1}^r,\{\Phi_k,\Psi_k\}_{k=1}^r,Sh(S_\Sigma)\bigr)$ with one localized categorical sector $\mathcal C_{p_k}$ for each node $p_k\in\Sigma$, a common bulk category $\mathcal C_{\mathrm{bulk}}$, attachment functors $\Phi_k:\mathcal C_{p_k}\to\mathcal C_{\mathrm{bulk}}$ and $\Psi_k:\mathcal C_{\mathrm{bulk}}\to\mathcal C_{p_k}$, and shadow $Sh(S_\Sigma)\cong \mathcal P$. In \cite{RahmanQuiverDataI} this categorical datum entered only as a compatible realization of the same finite-node architecture. The aim of the present paper is to make the functorial content of that realization mathematically active.

\subsection{From state data to interaction data}

The state-data package $A_\Sigma=(V_\Sigma,E_\Sigma,c_\Sigma)$ records the finite algebraic variables canonically attached to the degeneration, but it does not yet record how the localized sectors interact. The attachment functors $\Phi_k$ and $\Psi_k$ provide precisely the additional information needed to address that problem. The first step is therefore to enlarge the vertex set by adjoining a bulk vertex $v_{\mathrm{bulk}}$ and defining the extended vertex set $V_\Sigma^{\mathrm{ext}}:=V_\Sigma\sqcup\{v_{\mathrm{bulk}}\}$. One then uses the attachment functors to define the corresponding functorial coupling relation. At the most basic level, the datum of $\Phi_k$ and $\Psi_k$ determines the bulk couplings $v_k\rightsquigarrow v_{\mathrm{bulk}}$ and $v_{\mathrm{bulk}}\rightsquigarrow v_k$, and once compositions are admitted, the composites $\Psi_j\circ\Phi_i$ determine mediated node-to-node couplings $v_i\rightsquigarrow v_j$.

The mathematical problem addressed in this paper is to show that these relations are canonical consequences of the finite-node schober datum and that they admit an algebraic decategorification compatible with the state-data package extracted in \cite{RahmanQuiverDataI}. In this way the present paper produces the finite interaction layer carried by the degeneration. If \cite{RahmanQuiverDataI} isolated the intrinsic state variables, then the present paper isolates the intrinsic incidence data. These data are the missing algebraic layer between the corrected perverse/Hodge/schober architecture and the later stability, BPS-spectrum, and wall-crossing formalisms.

\subsection{Relation to earlier work}

The starting point remains the corrected perverse object first constructed in the ODP case in \cite{RahmanSchoberPaper} and then placed into the nearby-/vanishing-cycle and limiting mixed-Hodge-theoretic framework in \cite{RahmanPerverseNearbyCycles}. The finite-node extension picture and its mixed-Hodge-module refinement were established in \cite{RahmanMixedHodgeModules}, where the corrected global extension was shown to fit into
\[
0\to IC_{X_0}\to \mathcal P\to \bigoplus_{k=1}^r i_{k*}\Q_{\{p_k\}}\to 0
\]
and the nodewise decomposition of the global extension space was made explicit. In \cite{RahmanQuiverDataI} we extracted from this theorem package the finite algebraic state-data package $A_\Sigma=(V_\Sigma,E_\Sigma,c_\Sigma)$. The finite-node schober package was constructed in \cite{RahmanMultiNodeSchoberPaper}, where one proves the existence of one localized categorical sector per node, the shadow identification $Sh(S_\Sigma)\cong \mathcal P$, and the first quiver-shadow precursor.

The present paper is the next layer above that theorem package. It does not reprove the corrected extension, the mixed-Hodge-module lift, the algebraic state-data extraction of \cite{RahmanQuiverDataI}, or the existence of the finite-node schober package. Instead, it uses those results as input and extracts from them the canonical interaction/incidence structure carried by the functorial bulk/localized couplings. In particular, the paper is designed to sit between \cite{RahmanQuiverDataI}, where the finite algebraic state variables were isolated, and the later stability/BPS/wall-crossing paper, where those variables and incidence data will be used dynamically.

\subsection{Primitive local anchoring and functorial coupling}

The interaction and incidence data extracted in the present paper should not be read as externally imposed graph-theoretic decorations on the state-data package of \cite{RahmanQuiverDataI}. Rather, they are the first functorial organization of the same finite-node architecture already visible on the perverse, mixed-Hodge, and categorical sides. In \cite{RahmanQuiverDataI}, the nodewise basis
\[
E_\Sigma \cong \bigoplus_{k=1}^r \Q e_k
\]
was extracted as the first algebraic shadow of the localized finite-node package visible on the perverse, mixed-Hodge, and categorical sides. In the present paper, the new interaction layer is likewise not introduced from outside the geometry: it is carried by the localized categorical sectors
\[
\{\mathcal C_{p_k}\}_{k=1}^r,
\]
the common bulk category $\mathcal C_{\mathrm{bulk}}$, and the attachment functors
\[
\Phi_k : \mathcal C_{p_k} \to \mathcal C_{\mathrm{bulk}},
\qquad
\Psi_k : \mathcal C_{\mathrm{bulk}} \to \mathcal C_{p_k}.
\]

Thus the passage from the state-data package
\[
A_\Sigma := (V_\Sigma, E_\Sigma, c_\Sigma)
\]
to the interaction/incidence package of the present paper should be understood as the first functorial organization of the primitive local package already carried by the finite-node degeneration. The later stability, BPS, and wall-crossing papers will therefore not begin from an abstract quiver chosen independently of the geometry. They will consume an interaction layer extracted from the same corrected finite-node perverse extension, its mixed-Hodge-module refinement, and its schober realization.

\subsection{Role of current work in the larger sequence}

The present work is the first interaction-theoretic paper in the finite-node algebraic sequence. \cite{RahmanQuiverDataI} isolated the intrinsic state variables attached to the degeneration, namely the finite vertex set, the nodewise coupling space, and the coefficient vector of the corrected global extension class. The role of the present paper is to add the next layer forced by the finite-node schober package: the functorial coupling pattern, its incidence relation, its canonical binary decategorification, and the resulting quiver-theoretic package
\[
\mathfrak Q_\Sigma := (V_\Sigma, E_\Sigma, c_\Sigma, \mathcal F_\Sigma, I_\Sigma).
\]

In this sense, the present work sits exactly between the state-data paper and the later graded-interaction and dynamical papers. It does not yet define stability data, BPS sectors, or wall-crossing structure. Instead, it isolates the interaction layer without which those later constructions cannot be formulated intrinsically in finite-node terms. Thus the sequence
\[
\text{state data} \longrightarrow \text{interaction/incidence data} \longrightarrow \text{later dynamical structure}
\]
should be read as a forced progression rather than as a sequence of externally imposed algebraic enrichments.

\subsection{Relation to the quiver and BPS literature}

In the BPS-quiver literature, quivers and related incidence data provide the finite algebraic interface through which one studies constituent sectors, interaction channels, stability, and wall crossing; see, for example, \cite{AlimCecottiCordovaEspahbodiRastogiVafa_BPSQuivers,AlimCecottiCordovaEspahbodiRastogiVafa_N2Quivers,Denef_QuantumQuivers,Cecotti_QuiverBPS}. The present paper remains one step before the full BPS formalism. Its purpose is not yet to define stability conditions or BPS indices, but rather to extract the interaction layer that such a theory would consume.

The crucial point is that the finite-node schober datum already carries more than isolated categorical sectors: it carries attachment functors and hence a functorial coupling pattern. The present paper formalizes that pattern in a way that is compatible with the state-data package of \cite{RahmanQuiverDataI}. The resulting incidence data are therefore not postulated externally; they are extracted from the finite-node geometry through the corrected perverse extension, its mixed-Hodge-module refinement, and its schober realization.

\subsection{What the current work does not yet do}

The present paper deliberately stops at the interaction/incidence layer. It does not yet define a graded interaction law, does not yet impose a numerical arrow multiplicity law stronger than binary support, and does not yet introduce stability conditions, chamber structures, BPS indices, or wall-crossing formulas. Those later structures require additional categorical or homological input beyond the support-level interaction data extracted here.

More precisely, the present paper shows that the finite-node schober package already determines a canonical functorial coupling relation and hence a canonical incidence package. This is the correct first interaction-theoretic output of the finite-node geometry. The passage from support-level incidence to stronger graded or weighted interaction packages belongs to the next stage of the sequence, and the passage from those packages to stability, BPS, and wall-crossing structure belongs to the later dynamical papers.

\subsection{Why binary incidence is the correct output at this stage}

The binary decategorification adopted in the present paper is deliberate. From the finite-node schober package one canonically obtains a finite relation on the extended vertex set, and the mathematically minimal decategorification of such a relation is its characteristic function. This yields a \(\{0,1\}\)-valued incidence matrix recording only the presence or absence of coupling channels. At the level of the current theorem package, this is the only numerical incidence datum canonically forced by the constructions.

Accordingly, the present paper does not impose a stronger weighted interaction law by hand. Such a law would require additional invariants --- for example, ranks, dimensions, Euler characteristics, or graded categorical data attached to the mediated channels --- that are not yet part of the present finite-node theorem package. The binary incidence package constructed here should therefore be read as the correct support-level interaction object from which later weighted or graded refinements may be extracted.

\subsection{Main results}

The main results of the paper are the following.

\begin{enumerate}
\item[\textbf{(R1)}] The finite-node schober package determines an extended vertex set $V_\Sigma^{\mathrm{ext}}:=V_\Sigma\sqcup\{v_{\mathrm{bulk}}\}$, where the added bulk vertex corresponds to the bulk category $\mathcal C_{\mathrm{bulk}}$.

\item[\textbf{(R2)}] The attachment functors $\Phi_k:\mathcal C_{p_k}\to \mathcal C_{\mathrm{bulk}}$ and $\Psi_k:\mathcal C_{\mathrm{bulk}}\to \mathcal C_{p_k}$ determine a canonical functorial coupling relation on $V_\Sigma^{\mathrm{ext}}$, and the composites $\Psi_j\circ\Phi_i$ determine the corresponding mediated node-to-node relations.

\item[\textbf{(R3)}] This functorial coupling relation determines a canonical functorial incidence package $\mathfrak I_\Sigma:=\bigl(V_\Sigma^{\mathrm{ext}},\rightsquigarrow_\Sigma\bigr)$, which admits a canonical binary decategorification in the form of the incidence matrix $I_\Sigma\in M_{r+1}(\{0,1\})$, equivalently the decategorified incidence package $\mathcal I_\Sigma:=\bigl(V_\Sigma^{\mathrm{ext}},I_\Sigma\bigr)$.

\item[\textbf{(R4)}] The state-data package of \cite{RahmanQuiverDataI} together with the incidence data of the present paper determine a quiver-theoretic package $\mathfrak Q_\Sigma:=(V_\Sigma,E_\Sigma,c_\Sigma,\mathcal F_\Sigma,I_\Sigma)$, where $\mathcal F_\Sigma:=\{(\Phi_k,\Psi_k)\}_{k=1}^r$.

\item[\textbf{(R5)}] The incidence and quiver-theoretic package is compatible with the corrected perverse extension, its mixed-Hodge-module lift, and its schober shadow, and is invariant under the appropriate notion of equivalence of finite-node schober realizations.
\end{enumerate}

\subsection{Scope and organization}

The present paper is confined to the extraction of functorial coupling and incidence data from the finite-node schober package and their decategorified algebraic image. It does not yet define a stability structure, BPS indices, or a wall-crossing theorem. Those belong to the next paper in the sequence. The role of the present paper is to isolate the interaction layer that lies between the intrinsic state variables of \cite{RahmanQuiverDataI} and the stability/BPS formalism in future work.

Section~\ref{sec:finite-node-input} recalls the finite-node input data from \cite{RahmanQuiverDataI} and from the schober theorem package. Section~\ref{sec:extended-vertex-data} defines the bulk vertex and the extended vertex set. Section~\ref{sec:functorial-coupling-relation} introduces the functorial coupling relation determined by the attachment functors. Section~\ref{sec:functorial-incidence-package} packages these relations into the canonical incidence datum. Section~\ref{sec:decategorified-incidence-data} defines the corresponding algebraic incidence data. Section~\ref{sec:quiver-theoretic-package} assembles the quiver-theoretic package. Section~\ref{sec:compatibility-with-partI-data} proves compatibility with the state-data package of \cite{RahmanQuiverDataI}. Section~\ref{sec:invariance-under-equivalence} proves invariance under the appropriate equivalence notion. Section~\ref{sec:examples} gives explicit one-node, two-node, and three-node examples. Appendix~\ref{app:binary-decategorification} explains the mathematical status of the binary decategorification adopted in the paper and formulates the conditions required for a later weighted refinement.

\section{Finite-node input data}\label{sec:finite-node-input}

In this section we record the finite-node input data used throughout the paper. The purpose is not to reprove the results of \cite{RahmanQuiverDataI} or of the preceding papers, but to fix the notation and structural packages that serve as input to the interaction and incidence constructions developed below. The point of departure is the finite algebraic state-data package extracted in \cite{RahmanQuiverDataI}, together with the finite-node schober package constructed in \cite{RahmanMultiNodeSchoberPaper}.

\subsection{Finite node set and state-data package}

Let $\pi:X\to\Delta$ be a finite-node conifold degeneration in the sense fixed in \cite{RahmanQuiverDataI}, and let $\Sigma=\{p_1,\dots,p_r\}\subset X_0$ be the finite singular locus of the central fiber. Earlier work \cite{RahmanQuiverDataI} attached to this degeneration the finite algebraic state-data package
\[
A_\Sigma:=(V_\Sigma,E_\Sigma,c_\Sigma),
\]
whose components are recalled as follows.

First, the finite vertex set is
\begin{equation}\label{eq:partII-def-VSigma}
V_\Sigma:=\{v_1,\dots,v_r\},
\end{equation}
with $v_k$ corresponding to the node $p_k\in\Sigma$.

Second, the nodewise coupling space is
\begin{equation}\label{eq:partII-def-ESigma}
E_\Sigma:=\Ext^1_{\Perv(X_0;\Q)}(Q_\Sigma,IC_{X_0}),
\qquad
Q_\Sigma:=\bigoplus_{k=1}^r i_{k*}\Q_{\{p_k\}},
\end{equation}
and \cite{RahmanQuiverDataI} proved that
\begin{equation}\label{eq:partII-ESigma-decomposition}
E_\Sigma\cong \bigoplus_{k=1}^r \Q e_k.
\end{equation}
Thus $E_\Sigma$ is a finite-dimensional $\Q$-vector space of dimension $r$, canonically decomposed into one distinguished nodewise coupling line $\Q e_k$ for each node $p_k\in\Sigma$; see \cite{RahmanQuiverDataI,RahmanMixedHodgeModules}.

Third, the corrected global perverse extension class satisfies
\begin{equation}\label{eq:partII-cSigma-expansion}
[\mathcal P]_{\mathrm{perv}}=\sum_{k=1}^r c_k e_k,
\end{equation}
and the coefficient vector is
\begin{equation}\label{eq:partII-def-cSigma}
c_\Sigma:=(c_1,\dots,c_r)\in\Q^r.
\end{equation}

We therefore regard
\[
A_\Sigma=(V_\Sigma,E_\Sigma,c_\Sigma)
\]
as the finite algebraic state-data package attached to the degeneration. This is the algebraic input produced in \cite{RahmanQuiverDataI} and used as the state layer throughout the present paper.

\subsection{Finite-node schober package}

The categorical input is the finite-node schober package constructed in \cite{RahmanMultiNodeSchoberPaper}. We recall only the notation needed below.

\begin{definition}[finite-node schober package]
\label{def:partII-finite-node-schober-package}
The \emph{finite-node schober package} attached to the degeneration is the datum
\begin{equation}\label{eq:partII-def-SSigma}
S_\Sigma:=\Bigl(\mathcal C_{\mathrm{bulk}},\{\mathcal C_{p_k}\}_{k=1}^r,\{\Phi_k,\Psi_k\}_{k=1}^r,Sh(S_\Sigma)\Bigr),
\end{equation}
where:
\begin{enumerate}
\item $\mathcal C_{\mathrm{bulk}}$ is the bulk category attached to the smooth sector of the degeneration;
\item for each $1\le k\le r$, $\mathcal C_{p_k}$ is the localized categorical sector attached to the node $p_k\in\Sigma$;
\item $\Phi_k:\mathcal C_{p_k}\to \mathcal C_{\mathrm{bulk}}$ and $\Psi_k:\mathcal C_{\mathrm{bulk}}\to \mathcal C_{p_k}$ are the attachment functors;
\item $Sh(S_\Sigma)$ denotes the perverse-sheaf shadow of the schober datum.
\end{enumerate}
\end{definition}

By the main shadow theorem of \cite{RahmanMultiNodeSchoberPaper}, one has
\begin{equation}\label{eq:partII-shadow-is-P}
Sh(S_\Sigma)\cong \mathcal P,
\end{equation}
where $\mathcal P$ is the corrected finite-node perverse object. Thus the schober package is not independent of the corrected extension picture; it is a categorical realization of the same finite-node structure.

\begin{remark}
\label{rem:partII-same-node-indexing}
The same finite node set $\Sigma=\{p_1,\dots,p_r\}$ indexes all three levels of structure used in the present paper:
\begin{enumerate}
\item the localized quotient
\[
Q_\Sigma=\bigoplus_{k=1}^r i_{k*}\Q_{\{p_k\}}
\]
on the perverse side;
\item the localized Hodge quotient
\[
Q_\Sigma^H=\bigoplus_{k=1}^r i_{k*}\Q^H_{\{p_k\}}(-1)
\]
on the mixed-Hodge side;
\item the localized categorical family
\[
\mathcal C_\Sigma:=\{\mathcal C_{p_k}\}_{k=1}^r
\]
on the schober side.
\end{enumerate}
This common indexing, established in \cite{RahmanQuiverDataI}, is the basis of the interaction/incidence extraction carried out in the later sections.
\end{remark}

\subsection{Functorial coupling datum}

The new input for the present paper is the functorial coupling datum carried by the attachment functors of the schober package.

\begin{definition}[functorial coupling datum] \label{def:partII-functorial-coupling-datum}
Associated with the finite-node schober package \(S_\Sigma\) is the family
\[
\mathcal{F}_\Sigma := \{(\Phi_k,\Psi_k)\}_{k=1}^r,
\]
where
\[
\Phi_k : \mathcal C_{p_k} \to \mathcal C_{\mathrm{bulk}},
\qquad
\Psi_k : \mathcal C_{\mathrm{bulk}} \to \mathcal C_{p_k}.
\]
We call \(\mathcal{F}_\Sigma\) the functorial coupling datum of the finite-node degeneration.
\end{definition}

\begin{remark}
In earlier work, the data \(\mathcal{F}_\Sigma\) appeared only as part of the categorical realization of the corrected extension package. In the present paper, \(\mathcal{F}_\Sigma\) becomes mathematically active: it is the source of the functorial coupling relation, the functorial incidence package, the decategorified incidence package, and the quiver-theoretic assembly. Thus the passage from the state-data package
\[
A_\Sigma=(V_\Sigma,E_\Sigma,c_\Sigma)
\]
to the interaction layer of the present paper is exactly the passage to the structure determined by
\[
\mathcal{F}_\Sigma=\{(\Phi_k,\Psi_k)\}_{k=1}^r.
\]
\end{remark}

\subsection{Input packages for the present paper}

We summarize the input packages used below.

\begin{definition}[current input packages]
The present paper takes as input the following data:
\begin{enumerate}
\item the finite node set
\[
\Sigma := \{p_1,\dots,p_r\};
\]
\item the finite algebraic state-data package
\[
A_\Sigma := (V_\Sigma,E_\Sigma,c_\Sigma);
\]
\item the finite-node schober package
\[
S_\Sigma := \Bigl(\mathcal C_{\mathrm{bulk}},\{\mathcal C_{p_k}\}_{k=1}^r,\{\Phi_k,\Psi_k\}_{k=1}^r,Sh(S_\Sigma)\Bigr);
\]
\item the functorial coupling datum
\[
\mathcal{F}_\Sigma := \{(\Phi_k,\Psi_k)\}_{k=1}^r.
\]
\end{enumerate}
\end{definition}

\begin{remark}
The results of earlier work are used here only through the input packages of the preceding definition. In particular, the present paper does not reprove the corrected finite-node extension, the mixed-Hodge-module lift, or the existence of the finite-node schober package. Its purpose is to extract from these already-established structures the canonical interaction and incidence data they determine.
\end{remark}
\section{Extended vertex data}\label{sec:extended-vertex-data}

The purpose of this section is to enlarge the finite vertex set $V_\Sigma=\{v_1,\dots,v_r\}$ of the state-data package by adjoining a single bulk vertex corresponding to the bulk category $\mathcal C_{\mathrm{bulk}}$ of the finite-node schober package. This enlargement is necessary for three reasons. First, the attachment functors $\Phi_k:\mathcal C_{p_k}\to \mathcal C_{\mathrm{bulk}}$ and $\Psi_k:\mathcal C_{\mathrm{bulk}}\to \mathcal C_{p_k}$ are functors between localized node sectors and a common bulk sector, so the bulk sector must itself be represented at the level of vertex data. Second, the bulk category acts as the common mediator through which localized sectors are coupled. Third, the resulting incidence structure is naturally framed by the bulk sector, and this framing must be visible already at the level of vertices. Thus the correct vertex package for the present paper is not only the node-indexed set $V_\Sigma$, but its one-point bulk extension.

\subsection{The bulk vertex}

We begin by singling out the bulk categorical sector in vertex form.

\begin{definition}[bulk vertex]
\label{def:bulk-vertex}
Let
\[
S_\Sigma:=\Bigl(\mathcal C_{\mathrm{bulk}},\{\mathcal C_{p_k}\}_{k=1}^r,\{\Phi_k,\Psi_k\}_{k=1}^r,Sh(S_\Sigma)\Bigr)
\]
be the finite-node schober package of Definition~\ref{def:partII-finite-node-schober-package}. The \emph{bulk vertex} associated with $S_\Sigma$ is the formal symbol
\begin{equation}\label{eq:def-vbulk}
v_{\mathrm{bulk}}.
\end{equation}
\end{definition}

\begin{remark}
\label{rem:vbulk-formal-symbol}
The bulk vertex $v_{\mathrm{bulk}}$ is not introduced as a new geometric object on the central fiber. It is the algebraic/categorical vertex representative of the bulk category $\mathcal C_{\mathrm{bulk}}$. In particular, it plays for the bulk sector the same role that the vertices $v_k\in V_\Sigma$ play for the localized sectors indexed by the nodes $p_k\in\Sigma$.
\end{remark}

\begin{remark}
\label{rem:vbulk-why-needed}
The introduction of $v_{\mathrm{bulk}}$ is forced by the form of the attachment functors. Since each $\Phi_k$ has source $\mathcal C_{p_k}$ and target $\mathcal C_{\mathrm{bulk}}$, and each $\Psi_k$ has source $\mathcal C_{\mathrm{bulk}}$ and target $\mathcal C_{p_k}$, the coupling pattern carried by the schober datum is not purely node-to-node. It is node-to-bulk and bulk-to-node. Therefore any vertex-level incidence formalism must contain a bulk vertex.
\end{remark}

\subsection{The extended vertex set}

We now enlarge the finite vertex set of \cite{RahmanQuiverDataI} by adjoining the bulk vertex.

\begin{definition}[extended vertex set]
\label{def:extended-vertex-set}
Let $V_\Sigma=\{v_1,\dots,v_r\}$ be the finite vertex set of the state-data package $A_\Sigma=(V_\Sigma,E_\Sigma,c_\Sigma)$ from \cite{RahmanQuiverDataI}. The \emph{extended vertex set} is defined by
\begin{equation}\label{eq:def-extended-vertex-set}
V_\Sigma^{\mathrm{ext}}:=V_\Sigma\sqcup\{v_{\mathrm{bulk}}\}.
\end{equation}
Equivalently,
\[
V_\Sigma^{\mathrm{ext}}:=\{v_1,\dots,v_r,v_{\mathrm{bulk}}\}.
\]
\end{definition}

\begin{lemma}[finiteness of the extended vertex set]
\label{lem:finiteness-extended-vertex-set}
If $|\Sigma|=r<\infty$, then
\[
|V_\Sigma^{\mathrm{ext}}|=r+1.
\]
\end{lemma}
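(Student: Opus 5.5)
The count follows directly from Definition~\ref{def:extended-vertex-set}, which presents $V_\Sigma^{\mathrm{ext}}$ as a disjoint union. The plan is to compute the cardinalities of the two pieces separately and add them.

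\textbf{Step 1: the node-indexed piece has $r$ elements.} By \eqref{eq:partII-def-VSigma}, $V_\Sigma=\{v_1,\dots,v_r\}$, with $v_k$ attached to the node $p_k\in\Sigma$. I will make explicit that the assignment $p_k\mapsto v_k$ is a bijection $\Sigma\to V_\Sigma$. The only point needing care is that the $v_k$ are pairwise distinct. Here I will invoke the common node indexing of Remark~\ref{rem:partII-same-node-indexing}: distinct nodes index distinct vertices, exactly as they index distinct summands $i_{k*}\Q_{\{p_k\}}$ of $Q_\Sigma$. Hence $|V_\Sigma|=|\Sigma|=r$. As a cross-check, this matches $\dim_\Q E_\Sigma=r$ from \eqref{eq:partII-ESigma-decomposition}.

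\textbf{Step 2: the bulk piece adds exactly one element.} By Definition~\ref{def:bulk-vertex}, $v_{\mathrm{bulk}}$ is a single formal symbol. Since the union in \eqref{eq:def-extended-vertex-set} is disjoint, $v_{\mathrm{bulk}}$ is not identified with any $v_k$. Remark~\ref{rem:vbulk-formal-symbol} supports this reading: $v_{\mathrm{bulk}}$ represents $\mathcal C_{\mathrm{bulk}}$ rather than any localized sector $\mathcal C_{p_k}$.

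\textbf{Step 3: add.} Cardinality is additive on disjoint unions of finite sets, so
\[
|V_\Sigma^{\mathrm{ext}}|=|V_\Sigma|+|\{v_{\mathrm{bulk}}\}|=r+1.
\]

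The only potential obstacle is the injectivity of the indexing $k\mapsto v_k$, because a careless reading of $\{v_1,\dots,v_r\}$ as a set could allow coincidences. I would settle it by treating $V_\Sigma$ as indexed by $\Sigma$, as fixed in \cite{RahmanQuiverDataI}, so that it is in bijection with $\Sigma$ by construction. Everything else is routine.
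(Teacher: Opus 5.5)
Your proposal is correct and follows essentially the same route as the paper's proof: count $|V_\Sigma|=r$ from the definition, then add one for the bulk vertex, which is kept distinct by the disjoint union. Your extra care about the injectivity of $k\mapsto v_k$ and your cross-check against $\dim_\Q E_\Sigma=r$ are harmless elaborations; the paper simply takes both points as given by construction.
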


\begin{proof}
By definition, $V_\Sigma=\{v_1,\dots,v_r\}$ has cardinality $r$. The extended vertex set is obtained by adjoining the single bulk vertex $v_{\mathrm{bulk}}$, which is distinct from the node-indexed vertices. Hence $|V_\Sigma^{\mathrm{ext}}|=|V_\Sigma|+1=r+1$.
\end{proof}

\begin{proposition}[canonical decomposition of the extended vertex set]
\label{prop:extended-vertex-decomposition}
The extended vertex set admits the canonical decomposition
\begin{equation}\label{eq:extended-vertex-decomposition}
V_\Sigma^{\mathrm{ext}}:=V_\Sigma\sqcup\{v_{\mathrm{bulk}}\},
\end{equation}
where $V_\Sigma$ is the localized vertex set indexed by the node set $\Sigma=\{p_1,\dots,p_r\}$ and $v_{\mathrm{bulk}}$ is the bulk vertex attached to $\mathcal C_{\mathrm{bulk}}$.
\end{proposition}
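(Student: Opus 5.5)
The statement is essentially definitional, so the plan is to unwind Definition~\ref{def:extended-vertex-set} and check that the two summands are what the proposition claims and that the decomposition involves no choices. I would proceed in three steps.

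\textbf{Step 1: the decomposition itself.} By Definition~\ref{def:extended-vertex-set}, $V_\Sigma^{\mathrm{ext}}$ is defined as the disjoint union $V_\Sigma\sqcup\{v_{\mathrm{bulk}}\}$. The decomposition \eqref{eq:extended-vertex-decomposition} therefore holds by construction, with the canonical inclusions of the two summands into the coproduct.

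\textbf{Step 2: identification of the summands.}
\begin{enumerate}
\item For the first summand, I would cite \eqref{eq:partII-def-VSigma}: $V_\Sigma=\{v_1,\dots,v_r\}$ with $v_k\leftrightarrow p_k$. This is exactly the node indexing recorded in Remark~\ref{rem:partII-same-node-indexing}.
\item For the second summand, I would cite Definition~\ref{def:bulk-vertex} and Remark~\ref{rem:vbulk-formal-symbol}: $v_{\mathrm{bulk}}$ is the formal vertex representative of $\mathcal C_{\mathrm{bulk}}$ in $S_\Sigma$.
\item Disjointness is part of the definition of $\sqcup$. Lemma~\ref{lem:finiteness-extended-vertex-set} confirms the resulting cardinality $r+1$.
\end{enumerate}

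\textbf{Step 3: canonicity.} This is the only point needing a remark. I would observe that the decomposition uses no auxiliary choices: no ordering of $\Sigma$ beyond its given labelling, and no identification of categories. Instead it is intrinsically characterized. The first summand is the set of vertices indexed by the localized sectors $\{\mathcal C_{p_k}\}$, equivalently by $\Sigma$. The second summand is the unique vertex corresponding to the common target of all $\Phi_k$ and source of all $\Psi_k$, namely $\mathcal C_{\mathrm{bulk}}$.

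Consequently, any relabelling of the nodes permutes $V_\Sigma$ and fixes $v_{\mathrm{bulk}}$, so it preserves both summands. I expect this invariance statement to be the main, though mild, obstacle, since canonicity must be phrased relative to the data of $S_\Sigma$ and not merely as a set-theoretic identity.
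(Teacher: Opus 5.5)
Your proposal is correct and follows the paper's proof, which just says the decomposition is immediate from Definitions~\ref{def:bulk-vertex} and \ref{def:extended-vertex-set}, with the node-indexed vertices and the bulk vertex disjoint by construction. Your Step~3 on relabelling invariance is a harmless elaboration the paper omits: there, canonicity only means that the two summands come directly from $V_\Sigma$ and $\mathcal C_{\mathrm{bulk}}$ with no further choices.
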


\begin{proof}
This is immediate from Definitions~\ref{def:bulk-vertex} and \ref{def:extended-vertex-set}. The node-indexed vertices and the bulk vertex serve different roles and are therefore kept disjoint by construction.
\end{proof}

\begin{remark}
\label{rem:localized-bulk-split-vertex-level}
The decomposition $V_\Sigma^{\mathrm{ext}}:=V_\Sigma\sqcup\{v_{\mathrm{bulk}}\}$ is the vertex-level algebraic form of the bulk/localized distinction already visible geometrically in the decomposition $X_0=U\sqcup\Sigma$; see \cite{RahmanQuiverDataI}. It is likewise visible sheaf-theoretically in the corrected extension
\[
0\to IC_{X_0}\to \mathcal P\to Q_\Sigma\to 0,
\]
and categorically in the schober package $S_\Sigma:=\Bigl(\mathcal C_{\mathrm{bulk}},\{\mathcal C_{p_k}\}_{k=1}^r,\{\Phi_k,\Psi_k\}_{k=1}^r,Sh(S_\Sigma)\Bigr)$. At the level of vertices, $V_\Sigma$ records the localized node sectors and $v_{\mathrm{bulk}}$ records the common bulk sector.
\end{remark}

\subsection{Why the bulk vertex is required}

We now explain why the extended vertex set rather than the localized vertex set alone is the correct input for the interaction formalism of the current work.

\begin{proposition}[necessity of the bulk vertex]
\label{prop:necessity-of-bulk-vertex}
Let $\mathcal F_\Sigma:=\{(\Phi_k,\Psi_k)\}_{k=1}^r$ be the functorial coupling datum of Definition~\ref{def:partII-functorial-coupling-datum}. Then the datum $\mathcal F_\Sigma$ cannot be represented purely on the localized vertex set $V_\Sigma$ alone. Rather, its source-target structure requires the extended vertex set $V_\Sigma^{\mathrm{ext}}$.
\end{proposition}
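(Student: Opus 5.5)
The plan is to make precise what it means for the datum $\mathcal F_\Sigma$ to be \emph{represented} on a vertex set, and then to show that no such representation exists on $V_\Sigma$ while one does on $V_\Sigma^{\mathrm{ext}}$. By a representation of $\mathcal F_\Sigma$ on a set $W$ I will mean an assignment sending each categorical sector occurring in $S_\Sigma$ to a vertex of $W$, subject to two conditions. First, it is compatible with the canonical node indexing of Remark~\ref{rem:partII-same-node-indexing}, so $\mathcal C_{p_k}\mapsto v_k$. Second, it records each functor in $\mathcal F_\Sigma$ as a directed pair (source vertex, target vertex). With this convention, the proposition reduces to a statement about where the sources and targets of $\Phi_k$ and $\Psi_k$ must be placed.

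\textbf{Step 1: the target of $\Phi_k$ has no vertex in $V_\Sigma$.} For each $k$, the functor $\Phi_k:\mathcal C_{p_k}\to\mathcal C_{\mathrm{bulk}}$ has source $\mathcal C_{p_k}$, which is forced to $v_k$, and target $\mathcal C_{\mathrm{bulk}}$. Since the vertices of $V_\Sigma$ are by \eqref{eq:partII-def-VSigma} indexed bijectively by the node sectors, a vertex for $\mathcal C_{\mathrm{bulk}}$ inside $V_\Sigma$ would have to be some $v_j$. That would identify $\mathcal C_{\mathrm{bulk}}$ with the localized sector $\mathcal C_{p_j}$, contradicting the distinction between the bulk sector and the localized sectors in Definition~\ref{def:partII-finite-node-schober-package}. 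The same argument applies to the source of $\Psi_k$, and at this point it uses only one node.

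\textbf{Step 2: ruling out a chosen $v_j$ in general.} I expect this to be the main obstacle, because the informal ``distinct roles'' argument has to be made rigorous. For $r\ge2$ I would show that any choice $j$ is non-canonical. Sending $\mathcal C_{\mathrm{bulk}}\mapsto v_j$ turns $\Phi_j$ into a loop at $v_j$ while every $\Phi_k$ with $k\neq j$ becomes an edge $v_k\to v_j$. This breaks the symmetry of the datum under relabelling of the nodes, which $\mathcal F_\Sigma$ respects since it is indexed by $\Sigma$ alone. For $r=1$ the case has to be handled directly. Collapsing bulk onto $v_1$ sends both $\Phi_1$ and $\Psi_1$ to loops, which destroys the source/target distinction between $\Phi_1$ and $\Psi_1$ that the datum records. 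In either case the assignment fails to be a faithful representation.

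\textbf{Step 3: existence on $V_\Sigma^{\mathrm{ext}}$.} Send $\mathcal C_{\mathrm{bulk}}\mapsto v_{\mathrm{bulk}}$, which is available by Definition~\ref{def:extended-vertex-set} and is disjoint from $V_\Sigma$ by Proposition~\ref{prop:extended-vertex-decomposition}. Each $\Phi_k$ is then recorded as $(v_k,v_{\mathrm{bulk}})$ and each $\Psi_k$ as $(v_{\mathrm{bulk}},v_k)$. These $2r$ pairs are pairwise distinct and non-loops, so the representation is faithful. It is also canonical, since no choices were made beyond the indexing already fixed in Remark~\ref{rem:partII-same-node-indexing}. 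This yields the claimed requirement of $V_\Sigma^{\mathrm{ext}}$, consistent with Remark~\ref{rem:vbulk-why-needed}.
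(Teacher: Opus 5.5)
Your Step 1 is essentially the paper's proof. Every \(\Phi_k\) and \(\Psi_k\) has \(\mathcal C_{\mathrm{bulk}}\) as one endpoint, and the vertices of \(V_\Sigma\) are by construction representatives of the localized sectors \(\mathcal C_{p_k}\) only. So \(V_\Sigma\) has no vertex for \(\mathcal C_{\mathrm{bulk}}\), and adjoining \(v_{\mathrm{bulk}}\) supplies one. The paper stops there: it implicitly reads ``representation'' as giving each sector its own vertex, and never considers placing \(\mathcal C_{\mathrm{bulk}}\) at some \(v_j\). Your Steps 2--3 make the notion explicit, rule out that collapse, and exhibit the representation on \(V_\Sigma^{\mathrm{ext}}\). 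This turns a definitional remark into a genuine, if small, nonexistence statement.

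If you keep this route, state faithfulness in the definition from the start: distinct functors in \(\mathcal F_\Sigma\) must go to distinct ordered pairs. You use it in Step 3 and in the \(r=1\) case, but your stated definition does not contain it. With it, no case split is needed. For any \(r\ge 1\) and any \(j\), sending \(\mathcal C_{\mathrm{bulk}}\mapsto v_j\) makes both \(\Phi_j\) and \(\Psi_j\) the loop \((v_j,v_j)\), so faithfulness fails uniformly.

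The relabelling argument for \(r\ge 2\) is the weak link, and you should drop it, for two reasons.
\begin{enumerate}
\item \(\mathcal F_\Sigma\) need not be symmetric under permuting nodes; that would require an autoequivalence of \(\mathcal C_{\mathrm{bulk}}\) intertwining the various \(\Phi_k,\Psi_k\). So naturality does not exclude a rule that selects \(j\) from the data itself.
\item At best it shows non-canonicity, not non-faithfulness. Your conclusion ``in either case the assignment fails to be a faithful representation'' therefore does not follow from it as written, although the uniform loop argument above does give it.
\end{enumerate}
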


\begin{proof}
For each $1\le k\le r$, the attachment functors satisfy $\Phi_k:\mathcal C_{p_k}\to \mathcal C_{\mathrm{bulk}}$ and $\Psi_k:\mathcal C_{\mathrm{bulk}}\to \mathcal C_{p_k}$. Thus the coupling datum has one endpoint in the localized sector $\mathcal C_{p_k}$ and the other endpoint in the bulk category $\mathcal C_{\mathrm{bulk}}$. The localized vertex set $V_\Sigma=\{v_1,\dots,v_r\}$ contains only the node-indexed representatives of the categories $\mathcal C_{p_k}$ and does not contain a representative of $\mathcal C_{\mathrm{bulk}}$. Therefore it is insufficient for recording the source-target pattern of the attachment functors. Adjoining $v_{\mathrm{bulk}}$ remedies exactly this deficiency and yields the extended vertex set $V_\Sigma^{\mathrm{ext}}:=V_\Sigma\sqcup\{v_{\mathrm{bulk}}\}$.
\end{proof}

\begin{remark}
\label{rem:bulk-vertex-framing}
The role of the bulk vertex is closely related to framing. The localized sectors $\mathcal C_{p_k}$ do not couple directly in the input schober datum; they couple through the common bulk sector $\mathcal C_{\mathrm{bulk}}$. Thus the bulk vertex provides the framing point relative to which the node sectors are attached. Later sections will make this precise by defining the functorial coupling relation and the associated incidence package on $V_\Sigma^{\mathrm{ext}}$.
\end{remark}

\begin{remark}
\label{rem:bulk-vertex-mediation}
The bulk vertex is also the mediator of effective node-to-node interactions. Once the compositions $\Psi_j\circ \Phi_i:\mathcal C_{p_i}\to \mathcal C_{p_j}$ are taken into account, one obtains induced couplings between localized sectors through the bulk category. This mediated structure is invisible if one works only with the localized vertex set $V_\Sigma$. The extended vertex set is therefore the minimal vertex-level package on which both direct node-to-bulk and mediated node-to-node relations can be defined.
\end{remark}

\subsection{Extended vertex package}

We conclude by isolating the vertex-level output of the present section.

\begin{definition}[extended vertex package]
\label{def:extended-vertex-package}
The \emph{extended vertex package} attached to the finite-node degeneration is the pair
\begin{equation}\label{eq:def-extended-vertex-package}
(V_\Sigma^{\mathrm{ext}},v_{\mathrm{bulk}}),
\end{equation}
where $V_\Sigma^{\mathrm{ext}}:=V_\Sigma\sqcup\{v_{\mathrm{bulk}}\}$.
\end{definition}

\begin{proposition}[canonicality of the extended vertex package]
\label{prop:extended-vertex-package-canonical}
The extended vertex package $(V_\Sigma^{\mathrm{ext}},v_{\mathrm{bulk}})$ is canonically determined by the finite algebraic state-data package $A_\Sigma=(V_\Sigma,E_\Sigma,c_\Sigma)$ together with the finite-node schober package $S_\Sigma:=\Bigl(\mathcal C_{\mathrm{bulk}},\{\mathcal C_{p_k}\}_{k=1}^r,\{\Phi_k,\Psi_k\}_{k=1}^r,Sh(S_\Sigma)\Bigr)$.
\end{proposition}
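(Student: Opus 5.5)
The plan is to unwind the definitions and check that each component of the pair $(V_\Sigma^{\mathrm{ext}},v_{\mathrm{bulk}})$ is read off from the input data without any auxiliary choice. Since this proposition is essentially a bookkeeping statement, I would first make precise what ``canonically determined'' means here: the package is obtained by a construction that uses only the data in $A_\Sigma$ and $S_\Sigma$. Moreover, an equivalence of inputs respecting the common node indexing of Remark~\ref{rem:partII-same-node-indexing} induces a unique bijection of extended vertex sets that fixes the bulk vertex. I would prove the statement in this sense.

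\textbf{Step 1.} The localized part $V_\Sigma=\{v_1,\dots,v_r\}$ is a component of $A_\Sigma$ by \eqref{eq:partII-def-VSigma}. Its labelling $v_k\leftrightarrow p_k$ is the same labelling that indexes the sectors $\mathcal C_{p_k}$ of $S_\Sigma$, by Remark~\ref{rem:partII-same-node-indexing}. So no ordering or matching between state-side and schober-side vertices has to be chosen.

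\textbf{Step 2.} The bulk vertex $v_{\mathrm{bulk}}$ is attached to the single distinguished entry $\mathcal C_{\mathrm{bulk}}$ of $S_\Sigma$ by Definition~\ref{def:bulk-vertex}. Since $S_\Sigma$ has exactly one bulk category, there is exactly one such vertex, and it is a formal symbol with no further data.

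\textbf{Step 3.} Form $V_\Sigma^{\mathrm{ext}}=V_\Sigma\sqcup\{v_{\mathrm{bulk}}\}$ as in Definition~\ref{def:extended-vertex-set}. The disjoint union is canonical up to unique isomorphism, and Proposition~\ref{prop:extended-vertex-decomposition} gives the canonical splitting, so the distinguished element $v_{\mathrm{bulk}}$ is intrinsically marked. Lemma~\ref{lem:finiteness-extended-vertex-set} confirms that the cardinality $r+1$ depends only on $|\Sigma|$.

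\textbf{Step 4.} For naturality, take another realization $(A'_\Sigma,S'_\Sigma)$ with the same node set. The identification $v_k\mapsto v'_k$ comes from $p_k$, and $\mathcal C_{\mathrm{bulk}}\simeq\mathcal C'_{\mathrm{bulk}}$ forces $v_{\mathrm{bulk}}\mapsto v'_{\mathrm{bulk}}$. Together these give a unique bijection of pointed sets.

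The main obstacle is Step 4. The paper has not yet fixed the notion of equivalence of schober realizations (that comes in Section~\ref{sec:invariance-under-equivalence}). I would therefore prove only the weak form needed here: any equivalence preserving the node indexing and sending bulk to bulk induces a unique pointed bijection. I would defer the full invariance statement to that later section rather than anticipate its definition.
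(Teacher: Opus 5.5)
Your Steps 1--3 are the paper's argument: $V_\Sigma$ is read off from $A_\Sigma$, $v_{\mathrm{bulk}}$ is attached to the unique bulk category $\mathcal C_{\mathrm{bulk}}$ of $S_\Sigma$, and so the pair is uniquely determined; the proposal is correct. Your Step 4 goes beyond the paper, which reads ``canonically determined'' only as ``uniquely read off from the input data'' and leaves equivalence questions to Section~\ref{sec:invariance-under-equivalence}; your weak naturality claim is harmless (essentially tautological, since the vertices are formal symbols indexed by $\Sigma$) but is not needed for the statement as the paper proves it.
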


\begin{proof}
The localized part $V_\Sigma$ is already determined by the state-data package of \cite{RahmanQuiverDataI}. The additional vertex $v_{\mathrm{bulk}}$ is determined by the bulk category $\mathcal C_{\mathrm{bulk}}$ appearing in the schober package. Hence the pair $(V_\Sigma^{\mathrm{ext}},v_{\mathrm{bulk}})$ is determined uniquely by these data.
\end{proof}

The next section uses the extended vertex set $V_\Sigma^{\mathrm{ext}}$ and the attachment functors $\Phi_k,\Psi_k$ to define the functorial coupling relation associated with the finite-node degeneration.

\section{Functorial coupling relation}\label{sec:functorial-coupling-relation}
The purpose of this section is to extract from the functorial coupling datum $\mathcal F_\Sigma:=\{(\Phi_k,\Psi_k)\}_{k=1}^r$ the first incidence structure associated with the finite-node degeneration. The key point is that the finite-node schober package does not merely provide isolated localized sectors $\mathcal C_{p_k}$ together with a bulk category $\mathcal C_{\mathrm{bulk}}$. It also provides the attachment functors $\Phi_k:\mathcal C_{p_k}\to \mathcal C_{\mathrm{bulk}}$ and $\Psi_k:\mathcal C_{\mathrm{bulk}}\to \mathcal C_{p_k}$, which encode how each localized sector is coupled to the common bulk sector; see \cite[Section~6.2]{RahmanMultiNodeSchoberPaper}. The natural algebraic reflection of this situation is a relation on the extended vertex set $V_\Sigma^{\mathrm{ext}}:=V_\Sigma\sqcup\{v_{\mathrm{bulk}}\}$. This relation is the first functorial interaction object extracted from the finite-node schober package.

\subsection{Basic bulk/localized coupling relation}

We begin with the direct coupling pattern carried by the attachment functors.

\begin{definition}[basic functorial coupling relation]
\label{def:basic-functorial-coupling-relation}
Let
\[
S_\Sigma:=\Bigl(\mathcal C_{\mathrm{bulk}},\{\mathcal C_{p_k}\}_{k=1}^r,\{\Phi_k,\Psi_k\}_{k=1}^r,Sh(S_\Sigma)\Bigr)
\]
be the finite-node schober package of Definition~\ref{def:partII-finite-node-schober-package}. The \emph{basic functorial coupling relation}
\[
\rightsquigarrow_{\mathrm{bulk}}
\ \subseteq\ 
V_\Sigma^{\mathrm{ext}}\times V_\Sigma^{\mathrm{ext}}
\]
is defined by the rules
\begin{equation}\label{eq:def-basic-bulk-coupling}
v_k\rightsquigarrow_{\mathrm{bulk}} v_{\mathrm{bulk}}
\quad\Longleftrightarrow\quad
\Phi_k:\mathcal C_{p_k}\to \mathcal C_{\mathrm{bulk}}
\text{ is part of }S_\Sigma,
\end{equation}
and
\begin{equation}\label{eq:def-basic-bulk-coupling-reverse}
v_{\mathrm{bulk}}\rightsquigarrow_{\mathrm{bulk}} v_k
\quad\Longleftrightarrow\quad
\Psi_k:\mathcal C_{\mathrm{bulk}}\to \mathcal C_{p_k}
\text{ is part of }S_\Sigma.
\end{equation}
\end{definition}

\begin{remark}
\label{rem:basic-coupling-relation-meaning}
The relation $\rightsquigarrow_{\mathrm{bulk}}$ records only the existence of the direct bulk/localized coupling pattern already present in the finite-node schober package. It is not yet a numerical incidence function, not a quiver arrow count, and not a stability datum. It is the vertex-level record of the source-target structure of the attachment functors.
\end{remark}

\begin{lemma}[well-definedness of the basic coupling relation]
\label{lem:well-defined-basic-coupling}
The relation $\rightsquigarrow_{\mathrm{bulk}}$ of Definition~\ref{def:basic-functorial-coupling-relation} is well-defined on the extended vertex set $V_\Sigma^{\mathrm{ext}}$.
\end{lemma}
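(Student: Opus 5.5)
The plan is to verify directly that Definition~\ref{def:basic-functorial-coupling-relation} singles out a genuine subset of $V_\Sigma^{\mathrm{ext}}\times V_\Sigma^{\mathrm{ext}}$. By Definition~\ref{def:extended-vertex-set} and Lemma~\ref{lem:finiteness-extended-vertex-set}, $V_\Sigma^{\mathrm{ext}}=\{v_1,\dots,v_r,v_{\mathrm{bulk}}\}$ is a finite set of $r+1$ distinct elements. The product $V_\Sigma^{\mathrm{ext}}\times V_\Sigma^{\mathrm{ext}}$ therefore splits into four disjoint types of ordered pairs: $(v_k,v_{\mathrm{bulk}})$, $(v_{\mathrm{bulk}},v_k)$, $(v_i,v_j)$, and $(v_{\mathrm{bulk}},v_{\mathrm{bulk}})$. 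The rules \eqref{eq:def-basic-bulk-coupling} and \eqref{eq:def-basic-bulk-coupling-reverse} only address the first two types. I will read the definition as declaring that no other pairs lie in $\rightsquigarrow_{\mathrm{bulk}}$, and state this explicitly so that membership is decided for every pair.

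The key step is to check that each rule refers unambiguously to data in $S_\Sigma$. For a pair $(v_k,v_{\mathrm{bulk}})$, the index $k$ is recovered uniquely from $v_k$ by the node-indexing $v_k\leftrightarrow p_k$ of \eqref{eq:partII-def-VSigma}. This indexing is the same one that indexes $\mathcal C_{p_k}$ and $\Phi_k$ (Remark~\ref{rem:partII-same-node-indexing}). Hence the condition ``$\Phi_k$ is part of $S_\Sigma$'' is a well-posed yes/no statement about the fixed package of Definition~\ref{def:partII-finite-node-schober-package}.

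Because $v_{\mathrm{bulk}}\notin V_\Sigma$ by the disjointness in Proposition~\ref{prop:extended-vertex-decomposition}, no pair can be of both the first and second types. So the two rules never assign conflicting conditions to the same pair. The reverse rule is handled identically using $\Psi_k$. Since $S_\Sigma$ contains $\Phi_k$ and $\Psi_k$ for every $1\le k\le r$, one concludes that $\rightsquigarrow_{\mathrm{bulk}}=\{(v_k,v_{\mathrm{bulk}}),(v_{\mathrm{bulk}},v_k):1\le k\le r\}$, a well-defined subset of $V_\Sigma^{\mathrm{ext}}\times V_\Sigma^{\mathrm{ext}}$ of cardinality $2r$.

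The main obstacle is conceptual rather than computational. It is making precise that the relation depends only on the source--target pattern of the functors, not on the particular functors chosen. This matters because $\Phi_k,\Psi_k$ are presumably defined only up to natural isomorphism, or up to equivalence of the categories $\mathcal C_{p_k}$, $\mathcal C_{\mathrm{bulk}}$. I would handle this by observing that the defining conditions only test whether a functor with the stated source and target occurs in $S_\Sigma$ at index $k$. That property is unchanged under replacing $\Phi_k,\Psi_k$ by isomorphic functors or the categories by equivalent ones. Independence of choices is therefore automatic, and it will be revisited in the invariance section.
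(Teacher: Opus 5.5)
Your proposal is correct and follows essentially the same route as the paper. The paper also resolves each clause through the canonical indexing $v_k\leftrightarrow p_k\leftrightarrow\mathcal C_{p_k}$, together with the fact that $S_\Sigma$ contains $\Phi_k,\Psi_k$ for every $k$; there, the absence of ambiguity is attributed to the canonical node-indexing of the localized sectors from the multi-node schober paper. Your extra points are consistent with the paper: the explicit convention that no other pairs lie in $\rightsquigarrow_{\mathrm{bulk}}$ is stated in the star-shaped coupling proposition, the explicit $2r$-element description is the finiteness lemma, and independence under isomorphic functors is the invariance section.
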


\begin{proof}
By Definition~\ref{def:partII-finite-node-schober-package}, for each node $p_k\in\Sigma$ the finite-node schober package contains functors $\Phi_k:\mathcal C_{p_k}\to \mathcal C_{\mathrm{bulk}}$ and $\Psi_k:\mathcal C_{\mathrm{bulk}}\to \mathcal C_{p_k}$. By Definition~\ref{def:extended-vertex-set}, the vertex $v_k\in V_\Sigma$ is attached to the localized sector $\mathcal C_{p_k}$, and by Definition~\ref{def:bulk-vertex}, the vertex $v_{\mathrm{bulk}}$ is attached to the bulk category $\mathcal C_{\mathrm{bulk}}$. Therefore each clause in \eqref{eq:def-basic-bulk-coupling} and \eqref{eq:def-basic-bulk-coupling-reverse} determines a relation between elements of $V_\Sigma^{\mathrm{ext}}$. No ambiguity enters, because the localized sectors are canonically indexed by the node set $\Sigma$; see \cite[Theorem~6.1]{RahmanMultiNodeSchoberPaper}. Hence $\rightsquigarrow_{\mathrm{bulk}}$ is well-defined.
\end{proof}

\begin{lemma}[finiteness of the basic coupling relation]
\label{lem:finiteness-basic-coupling}
The relation $\rightsquigarrow_{\mathrm{bulk}}$ is finite. More precisely, it consists of exactly the $2r$ ordered pairs
\[
(v_k,v_{\mathrm{bulk}}),
\qquad
(v_{\mathrm{bulk}},v_k),
\qquad
1\le k\le r.
\]
\end{lemma}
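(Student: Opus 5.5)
The plan is to read off the relation directly from Definition~\ref{def:basic-functorial-coupling-relation} and then count. Since $\rightsquigarrow_{\mathrm{bulk}}$ is defined only by the clauses \eqref{eq:def-basic-bulk-coupling} and \eqref{eq:def-basic-bulk-coupling-reverse}, every pair it contains has one of the two forms $(v_k,v_{\mathrm{bulk}})$ or $(v_{\mathrm{bulk}},v_k)$ with $1\le k\le r$. In particular, no pair $(v_i,v_j)$ of node vertices occurs, and neither does the diagonal pair $(v_{\mathrm{bulk}},v_{\mathrm{bulk}})$. This gives the inclusion of $\rightsquigarrow_{\mathrm{bulk}}$ into the displayed set of ordered pairs.

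For the reverse inclusion, I will invoke Definition~\ref{def:partII-finite-node-schober-package}. It says that for every $k$ the package $S_\Sigma$ contains both $\Phi_k$ and $\Psi_k$. So the right-hand sides of both clauses hold for every $k$, and all $2r$ listed pairs belong to the relation. Lemma~\ref{lem:well-defined-basic-coupling} guarantees that these clauses really do define pairs in $V_\Sigma^{\mathrm{ext}}\times V_\Sigma^{\mathrm{ext}}$.

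It remains to count the pairs, and this is the only point needing care: we must show the $2r$ pairs are pairwise distinct. By Definition~\ref{def:extended-vertex-set} and Lemma~\ref{lem:finiteness-extended-vertex-set}, the vertices $v_1,\dots,v_r,v_{\mathrm{bulk}}$ are $r+1$ distinct elements. Hence:
\begin{enumerate}
\item $(v_k,v_{\mathrm{bulk}})=(v_l,v_{\mathrm{bulk}})$ forces $k=l$, and likewise for the reverse pairs;
\item a pair of the first type never equals a pair of the second type, because its first entry is a node vertex while the other's first entry is $v_{\mathrm{bulk}}$, and $v_k\neq v_{\mathrm{bulk}}$.
\end{enumerate}
Thus the relation has exactly $2r$ elements. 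Finiteness also follows independently, since it is a subset of the finite set $V_\Sigma^{\mathrm{ext}}\times V_\Sigma^{\mathrm{ext}}$, which has $(r+1)^2$ elements.
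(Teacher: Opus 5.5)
Your proposal is correct and follows essentially the same route as the paper. Both arguments read the pairs directly off Definition~\ref{def:basic-functorial-coupling-relation}, use that $S_\Sigma$ contains $\Phi_k$ and $\Psi_k$ for every $k$, and then count. Your explicit check that the $2r$ pairs are pairwise distinct, via Lemma~\ref{lem:finiteness-extended-vertex-set}, makes rigorous a step that the paper's count leaves implicit; the paper instead counts the $r$ pairs of attachment functors.
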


\begin{proof}
For each node $p_k\in\Sigma$, the finite-node schober package contains exactly one pair of attachment functors $\Phi_k:\mathcal C_{p_k}\to \mathcal C_{\mathrm{bulk}}$ and $\Psi_k:\mathcal C_{\mathrm{bulk}}\to \mathcal C_{p_k}$; see \cite[Definition~3.4 and Theorem~4.4]{RahmanMultiNodeSchoberPaper}. By Definition~\ref{def:basic-functorial-coupling-relation}, these determine exactly the two relations $v_k\rightsquigarrow_{\mathrm{bulk}} v_{\mathrm{bulk}}$ and $v_{\mathrm{bulk}}\rightsquigarrow_{\mathrm{bulk}} v_k$. Since $|\Sigma|=r$, there are exactly $r$ such pairs of functors, and hence exactly $2r$ ordered pairs in the relation.
\end{proof}

\begin{proposition}[canonical star-shaped coupling structure]
\label{prop:canonical-star-coupling}
The finite-node schober package determines a canonical star-shaped coupling relation on $V_\Sigma^{\mathrm{ext}}$, centered at the bulk vertex $v_{\mathrm{bulk}}$, whose edges are exactly the ordered pairs recorded by $\rightsquigarrow_{\mathrm{bulk}}$.
\end{proposition}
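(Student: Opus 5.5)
The plan is to read the proposition as three separate assertions: the relation $\rightsquigarrow_{\mathrm{bulk}}$ is canonically determined by $S_\Sigma$; its underlying edge set is star-shaped with centre $v_{\mathrm{bulk}}$; and its edges are exactly the pairs it records. The last clause is tautological once the first two are in place, so the work is in canonicality and the star shape.

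For canonicality, I would invoke Lemma~\ref{lem:well-defined-basic-coupling}. Each clause in \eqref{eq:def-basic-bulk-coupling} and \eqref{eq:def-basic-bulk-coupling-reverse} refers only to the source--target data of $\Phi_k$ and $\Psi_k$. The localized sectors $\mathcal C_{p_k}$ are indexed by the node set $\Sigma$ via \cite[Theorem~6.1]{RahmanMultiNodeSchoberPaper}, and $v_{\mathrm{bulk}}$ is attached to $\mathcal C_{\mathrm{bulk}}$ by Proposition~\ref{prop:extended-vertex-package-canonical}. Hence no choices enter the construction. I would also check that relabelling the nodes by a permutation of $\Sigma$ permutes $\{v_1,\dots,v_r\}$ and carries $\rightsquigarrow_{\mathrm{bulk}}$ to the relation built from the relabelled package. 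This makes the relation canonical up to the node indexing already fixed in Remark~\ref{rem:partII-same-node-indexing}.

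For the star shape, I would use Lemma~\ref{lem:finiteness-basic-coupling}, which says the relation is exactly
\[
\{(v_k,v_{\mathrm{bulk}}),(v_{\mathrm{bulk}},v_k)\;:\;1\le k\le r\}.
\]
From this I would read off three facts:
\begin{enumerate}
\item every pair in the relation has $v_{\mathrm{bulk}}$ as exactly one endpoint;
\item no pair $(v_i,v_j)$ with $v_i,v_j\in V_\Sigma$ occurs, since no functor in $\mathcal F_\Sigma$ goes directly between two localized sectors (Remark~\ref{rem:bulk-vertex-framing});
\item no loop $(v_{\mathrm{bulk}},v_{\mathrm{bulk}})$ occurs.
\end{enumerate}
Consequently, the undirected graph underlying the relation is the star $K_{1,r}$ centred at $v_{\mathrm{bulk}}$, and each spoke is doubled as a pair of opposite orientations. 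By Lemma~\ref{lem:finiteness-extended-vertex-set} and Proposition~\ref{prop:extended-vertex-decomposition}, every vertex of $V_\Sigma^{\mathrm{ext}}$ other than $v_{\mathrm{bulk}}$ is a leaf joined only to the centre.

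The main obstacle is stating precisely what ``star-shaped'' means, since the notion is not defined earlier in the paper. I would fix it in the proof itself: a relation $R$ on a finite set $W$ is star-shaped with centre $w_0$ if every pair in $R$ has exactly one coordinate equal to $w_0$, and every $w\neq w_0$ is related to $w_0$ in at least one direction. The second condition is the one that needs the existence of $\Phi_k$ and $\Psi_k$ for every $k$, which is again supplied by Lemma~\ref{lem:finiteness-basic-coupling}. The only subtlety I would flag is that ``star-shaped'' here applies to the basic relation only. Mediated couplings $\Psi_j\circ\Phi_i$ are deliberately excluded at this stage, so the claim does not conflict with the later enlarged relation.
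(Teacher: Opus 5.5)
Your proposal is correct and follows essentially the paper's own argument: well-definedness from Lemma~\ref{lem:well-defined-basic-coupling}, the explicit list of $2r$ pairs $(v_k,v_{\mathrm{bulk}}),(v_{\mathrm{bulk}},v_k)$ from Lemma~\ref{lem:finiteness-basic-coupling}, and then reading off that every pair involves $v_{\mathrm{bulk}}$ and that no localized-to-localized pair occurs. Your explicit definition of ``star-shaped'' and the relabelling check make precise what the paper's proof leaves informal, but they do not change the route.
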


\begin{proof}
By Lemma~\ref{lem:well-defined-basic-coupling}, the relation $\rightsquigarrow_{\mathrm{bulk}}$ is well-defined on $V_\Sigma^{\mathrm{ext}}$. By Lemma~\ref{lem:finiteness-basic-coupling}, it consists precisely of the pairs $(v_k,v_{\mathrm{bulk}})$ and $(v_{\mathrm{bulk}},v_k)$ for $1\le k\le r$. Thus every localized vertex $v_k$ is coupled directly to the common bulk vertex $v_{\mathrm{bulk}}$, and no direct relation between two distinct localized vertices is introduced at this stage. This is exactly a star-shaped coupling pattern centered at $v_{\mathrm{bulk}}$.
\end{proof}

\subsection{Mediated node-to-node coupling relation}

The direct coupling relation above records how each localized sector is attached to the bulk category. The next step is to record the node-to-node interaction pattern mediated through the bulk. The relevant categorical objects are the composites $\Psi_j\circ \Phi_i:\mathcal C_{p_i}\to \mathcal C_{p_j}$ for $1\le i,j\le r$. These composites are defined because $\Phi_i:\mathcal C_{p_i}\to \mathcal C_{\mathrm{bulk}}$ and $\Psi_j:\mathcal C_{\mathrm{bulk}}\to \mathcal C_{p_j}$ have matching source and target. The existence of these composites is therefore forced by the schober datum itself.

At the level of the present paper, no additional invariant is imposed on these composites. Accordingly, the mediated relation records only the existence of a composable bulk-mediated channel from $\mathcal C_{p_i}$ to $\mathcal C_{p_j}$.

\begin{definition}[mediated coupling relation]
\label{def:mediated-coupling-relation}
Let $S_\Sigma$ be the finite-node schober package. The \emph{mediated coupling relation}
\[
\rightsquigarrow_{\mathrm{med}}
\ \subseteq\ 
V_\Sigma\times V_\Sigma
\]
is defined by
\begin{equation}\label{eq:def-mediated-coupling}
v_i\rightsquigarrow_{\mathrm{med}} v_j
\quad\Longleftrightarrow\quad
\Psi_j\circ \Phi_i:\mathcal C_{p_i}\to \mathcal C_{p_j}
\end{equation}
where $\Psi_j\circ \Phi_i:\mathcal C_{p_i}\to \mathcal C_{p_j}$ is a composable bulk-mediated channel in the schober datum. Equivalently, since $\Phi_i$ and $\Psi_j$ are part of the schober package for every $1\le i,j\le r$, one has
\begin{equation}\label{eq:mediated-relation-full}
\rightsquigarrow_{\mathrm{med}} \;=\; V_\Sigma\times V_\Sigma.
\end{equation}
\end{definition}

\begin{remark}
\label{rem:mediated-relation-strength}
Definition~\ref{def:mediated-coupling-relation} is intentionally minimal. It records only the existence of a composable mediated categorical channel from the localized sector $\mathcal C_{p_i}$ to the localized sector $\mathcal C_{p_j}$ through the common bulk category. It does not yet impose a numerical multiplicity, a nonvanishing criterion on a decategorified invariant, or a skew-symmetric incidence law. Those refinements belong to later work.
\end{remark}

\begin{remark}
\label{rem:mediated-relation-maximal-support}
At the present support-level stage, the mediated relation records in Definition \ref{def:mediated-coupling-relation} only formal bulk-mediated composability. It should therefore be regarded as the maximal support-level precursor, not yet as a selective interaction law.
\end{remark}

\begin{lemma}[well-definedness of the mediated relation]
\label{lem:well-defined-mediated-relation}
The relation $\rightsquigarrow_{\mathrm{med}}$ of Definition~\ref{def:mediated-coupling-relation} is well-defined on $V_\Sigma$.
\end{lemma}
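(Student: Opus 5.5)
The plan is to follow the pattern of Lemma~\ref{lem:well-defined-basic-coupling}: check that each clause of Definition~\ref{def:mediated-coupling-relation} refers only to data already fixed by the schober package $S_\Sigma$, and that it names an unambiguous ordered pair of elements of $V_\Sigma$.

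First, fix indices $1\le i,j\le r$. By Definition~\ref{def:partII-finite-node-schober-package}, $S_\Sigma$ contains $\Phi_i:\mathcal C_{p_i}\to\mathcal C_{\mathrm{bulk}}$ and $\Psi_j:\mathcal C_{\mathrm{bulk}}\to\mathcal C_{p_j}$. The target of $\Phi_i$ and the source of $\Psi_j$ are the same common bulk category, so the composite
\[
\Psi_j\circ\Phi_i:\mathcal C_{p_i}\to\mathcal C_{p_j}
\]
is defined. Hence the right-hand side of \eqref{eq:def-mediated-coupling} is a meaningful condition for every pair $(i,j)$, including the case $i=j$.

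Second, I would transfer this condition to vertices. By Definition~\ref{def:extended-vertex-set} and Remark~\ref{rem:partII-same-node-indexing}, the assignment $p_k\mapsto v_k$ and the indexing $p_k\mapsto\mathcal C_{p_k}$ are bijections coming from the same node set $\Sigma$. This indexing is canonical by \cite[Theorem~6.1]{RahmanMultiNodeSchoberPaper}. Consequently the source sector $\mathcal C_{p_i}$ determines $v_i$ uniquely, and the target sector $\mathcal C_{p_j}$ determines $v_j$ uniquely. The channel $\Psi_j\circ\Phi_i$ therefore singles out exactly one ordered pair $(v_i,v_j)\in V_\Sigma\times V_\Sigma$.

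Third, the pair does not depend on how the composite is described, because the relation only records that the composite exists. Replacing $\Phi_i$ or $\Psi_j$ by naturally isomorphic functors does not change the source or the target, so it does not change the pair. Since all $\Phi_i$ and $\Psi_j$ are present in the package, every pair is realized, and this recovers the identity \eqref{eq:mediated-relation-full}.

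The only point needing care is the second step: different nodes must give different sectors, so that no two vertices $v_i\neq v_{i'}$ correspond to the same category $\mathcal C_{p_i}$. I would settle this by citing the canonical node-indexing from \cite[Theorem~6.1]{RahmanMultiNodeSchoberPaper}, as in the proof of Lemma~\ref{lem:well-defined-basic-coupling}.
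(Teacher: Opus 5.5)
Your proposal is correct and takes essentially the same route as the paper, whose proof also just observes that $\Phi_i$ and $\Psi_j$ compose through the common bulk category $\mathcal C_{\mathrm{bulk}}$ and that $v_i,v_j$ are by construction the vertices attached to $\mathcal C_{p_i},\mathcal C_{p_j}$; your third step (naturality, and recovering $\rightsquigarrow_{\mathrm{med}}=V_\Sigma\times V_\Sigma$) belongs to Lemma~\ref{lem:explicit-form-mediated-relation} rather than to this lemma. The worry in your last paragraph is unnecessary: the pair $(v_i,v_j)$ comes from the indices labelling $\Phi_i$ and $\Psi_j$, not from the categories themselves, so no injectivity of $k\mapsto\mathcal C_{p_k}$ is needed, and none should be expected since the local sectors at different nodes may well be equivalent copies of one local model.
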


\begin{proof}
For each pair $1\le i,j\le r$, the attachment functors $\Phi_i:\mathcal C_{p_i}\to \mathcal C_{\mathrm{bulk}}$ and $\Psi_j:\mathcal C_{\mathrm{bulk}}\to \mathcal C_{p_j}$ belong to the schober package $S_\Sigma$ by Definition~\ref{def:partII-finite-node-schober-package}. Their composition $\Psi_j\circ \Phi_i:\mathcal C_{p_i}\to \mathcal C_{p_j}$ is therefore a well-defined composite of functors. Since $v_i$ and $v_j$ are by construction the vertices attached to $\mathcal C_{p_i}$ and $\mathcal C_{p_j}$, respectively, the condition \eqref{eq:def-mediated-coupling} defines a relation on $V_\Sigma$.
\end{proof}

\begin{lemma}[explicit form of the mediated relation]
\label{lem:explicit-form-mediated-relation}
The mediated coupling relation is the full relation on $V_\Sigma$. More precisely,
\[
\rightsquigarrow_{\mathrm{med}} \;=\; V_\Sigma\times V_\Sigma.
\]
\end{lemma}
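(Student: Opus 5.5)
The plan is to prove the equality $\rightsquigarrow_{\mathrm{med}}=V_\Sigma\times V_\Sigma$ by double inclusion, using only Definition~\ref{def:mediated-coupling-relation} and Lemma~\ref{lem:well-defined-mediated-relation}.

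The inclusion $\rightsquigarrow_{\mathrm{med}}\subseteq V_\Sigma\times V_\Sigma$ holds by construction. Definition~\ref{def:mediated-coupling-relation} declares $\rightsquigarrow_{\mathrm{med}}$ to be a subset of $V_\Sigma\times V_\Sigma$, and Lemma~\ref{lem:well-defined-mediated-relation} guarantees this is a genuine relation on $V_\Sigma$. In particular, no pair involving $v_{\mathrm{bulk}}$ occurs.

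For the reverse inclusion, I fix an arbitrary pair $(v_i,v_j)$ with $1\le i,j\le r$, allowing $i=j$. By Definition~\ref{def:partII-finite-node-schober-package}, $S_\Sigma$ contains $\Phi_i:\mathcal C_{p_i}\to\mathcal C_{\mathrm{bulk}}$ and $\Psi_j:\mathcal C_{\mathrm{bulk}}\to\mathcal C_{p_j}$. The target of $\Phi_i$ equals the source of $\Psi_j$, namely the single common bulk category. Hence the composite $\Psi_j\circ\Phi_i:\mathcal C_{p_i}\to\mathcal C_{p_j}$ exists and is a composable bulk-mediated channel. The defining condition \eqref{eq:def-mediated-coupling} therefore holds, and $v_i\rightsquigarrow_{\mathrm{med}} v_j$.

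As a consistency check, I would note that $|\rightsquigarrow_{\mathrm{med}}|=r^2$, using $|V_\Sigma|=r$ (compare Lemma~\ref{lem:finiteness-extended-vertex-set}).

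The only step needing care is the reverse inclusion. The point there is that the defining condition is \emph{purely} composability and is not a nonvanishing condition on the composite (Remark~\ref{rem:mediated-relation-strength}). Composability in turn depends on all $\Phi_k$ and $\Psi_k$ sharing the same bulk category $\mathcal C_{\mathrm{bulk}}$, rather than node-dependent bulk pieces. I would cite the schober package structure of \cite{RahmanMultiNodeSchoberPaper}, where $\mathcal C_{\mathrm{bulk}}$ is a single common category, to justify this. Apart from that, the argument is a direct unwinding of the definitions.
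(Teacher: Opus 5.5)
Your proposal is correct and is essentially the paper's own argument: the paper fixes an arbitrary $(v_i,v_j)\in V_\Sigma\times V_\Sigma$, observes that $\Phi_i$ and $\Psi_j$ compose through the common bulk category $\mathcal C_{\mathrm{bulk}}$, and concludes $v_i\rightsquigarrow_{\mathrm{med}} v_j$. The only difference is that you spell out the trivial inclusion $\rightsquigarrow_{\mathrm{med}}\subseteq V_\Sigma\times V_\Sigma$, which the paper leaves implicit because Definition~\ref{def:mediated-coupling-relation} already places the relation inside $V_\Sigma\times V_\Sigma$.
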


\begin{proof}
Let $(v_i,v_j)\in V_\Sigma\times V_\Sigma$. By Definition~\ref{def:partII-finite-node-schober-package}, the schober package contains the attachment functors $\Phi_i:\mathcal C_{p_i}\to \mathcal C_{\mathrm{bulk}}$ and $\Psi_j:\mathcal C_{\mathrm{bulk}}\to \mathcal C_{p_j}$. Hence the composite $\Psi_j\circ \Phi_i:\mathcal C_{p_i}\to \mathcal C_{p_j}$ is a composable bulk-mediated channel in the sense of Definition~\ref{def:mediated-coupling-relation}. Therefore $v_i\rightsquigarrow_{\mathrm{med}} v_j$. Since $(v_i,v_j)$ was arbitrary, one obtains $\rightsquigarrow_{\mathrm{med}} = V_\Sigma\times V_\Sigma$.
\end{proof}

\begin{lemma}[finiteness of the mediated relation]
\label{lem:finiteness-mediated-relation}
The mediated coupling relation $\rightsquigarrow_{\mathrm{med}}$ is finite. More precisely,
\[
\rightsquigarrow_{\mathrm{med}} = V_\Sigma\times V_\Sigma
\]
contains exactly $r^2$ ordered pairs.
\end{lemma}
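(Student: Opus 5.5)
The plan is to combine Lemma~\ref{lem:explicit-form-mediated-relation} with a cardinality count for the product set. That lemma already identifies $\rightsquigarrow_{\mathrm{med}}$ with the full relation $V_\Sigma\times V_\Sigma$. So the finiteness statement and the count reduce to computing $|V_\Sigma\times V_\Sigma|$.

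First, I will record that $V_\Sigma=\{v_1,\dots,v_r\}$ has exactly $r$ elements. The vertices $v_k$ are indexed bijectively by the nodes $p_k\in\Sigma$ via \eqref{eq:partII-def-VSigma}, and $|\Sigma|=r<\infty$. Distinctness of the $v_k$ for distinct $k$ follows from the canonical node indexing recalled in Remark~\ref{rem:partII-same-node-indexing}, exactly as in the proof of Lemma~\ref{lem:finiteness-extended-vertex-set}.

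Second, I will invoke Lemma~\ref{lem:explicit-form-mediated-relation} to write $\rightsquigarrow_{\mathrm{med}}=V_\Sigma\times V_\Sigma$. The Cartesian product of two finite sets of cardinality $r$ is finite, and its cardinality is $|V_\Sigma|\cdot|V_\Sigma|=r^2$. Explicitly, the map
\[
\{1,\dots,r\}^2\to \rightsquigarrow_{\mathrm{med}},\qquad (i,j)\mapsto (v_i,v_j),
\]
is a bijection.

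I will also point out what the count includes. The $r^2$ pairs contain the $r$ diagonal pairs $(v_i,v_i)$, coming from the composites $\Psi_i\circ\Phi_i$, and the $r(r-1)$ off-diagonal mediated pairs. This decomposition is not needed for the proof, but it clarifies the count.

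The only step requiring any care is checking that distinct index pairs $(i,j)$ give distinct ordered pairs of vertices, so that nothing is double-counted. This amounts to the injectivity of $k\mapsto v_k$, which is built into Definition~\ref{def:extended-vertex-set}. With that, the argument is a direct consequence of the preceding lemma.
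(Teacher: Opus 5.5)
Your proposal is correct and follows the paper's own argument: Lemma~\ref{lem:explicit-form-mediated-relation} gives $\rightsquigarrow_{\mathrm{med}}=V_\Sigma\times V_\Sigma$, and since $|V_\Sigma|=r$ the product has exactly $r^2$ ordered pairs. Your extra check that $k\mapsto v_k$ is injective is harmless; the paper simply takes $|V_\Sigma|=r$ directly from the definition $V_\Sigma=\{v_1,\dots,v_r\}$.
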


\begin{proof}
By Lemma~\ref{lem:explicit-form-mediated-relation},
\[
\rightsquigarrow_{\mathrm{med}} = V_\Sigma\times V_\Sigma.
\]
Since $|V_\Sigma|=r$, the Cartesian product $V_\Sigma\times V_\Sigma$ contains exactly $r^2$ ordered pairs.
\end{proof}

\begin{proposition}[canonical mediated coupling structure]
\label{prop:canonical-mediated-coupling}
The finite-node schober package determines a canonical mediated coupling relation
\[
\rightsquigarrow_{\mathrm{med}}
\ \subseteq\ 
V_\Sigma\times V_\Sigma
\]
through the composites $\Psi_j\circ \Phi_i$. At the level of the present paper, this relation is the full composability relation
\[
\rightsquigarrow_{\mathrm{med}} = V_\Sigma\times V_\Sigma.
\]
Consequently, the schober datum determines both a direct bulk/localized coupling structure on $V_\Sigma^{\mathrm{ext}}$ and a mediated node-to-node coupling structure on $V_\Sigma$.
\end{proposition}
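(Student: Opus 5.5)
The plan is to assemble the statement directly from the lemmas already proved in this section, in three short steps.

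\emph{Step 1 (existence and well-definedness).} First I would establish that the schober package determines $\rightsquigarrow_{\mathrm{med}}\subseteq V_\Sigma\times V_\Sigma$ through the composites $\Psi_j\circ\Phi_i$. This is exactly Lemma~\ref{lem:well-defined-mediated-relation}. For each pair $1\le i,j\le r$, the functors $\Phi_i$ and $\Psi_j$ belong to $S_\Sigma$ by Definition~\ref{def:partII-finite-node-schober-package}, so the composite $\Psi_j\circ\Phi_i:\mathcal C_{p_i}\to\mathcal C_{p_j}$ is defined. The vertices $v_i,v_j$ are attached to $\mathcal C_{p_i},\mathcal C_{p_j}$ through the common node indexing of Remark~\ref{rem:partII-same-node-indexing}.

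\emph{Step 2 (canonicity).} Next I would argue that the relation is canonical, meaning that no choice beyond $S_\Sigma$ enters. The vertex set $V_\Sigma$ is fixed by the state data, as in Proposition~\ref{prop:extended-vertex-package-canonical}. The indexing of the localized sectors by $\Sigma$ is canonical, as already used in the proof of Lemma~\ref{lem:well-defined-basic-coupling}. The defining condition \eqref{eq:def-mediated-coupling} refers only to composability of functors already present in $S_\Sigma$. Hence two constructions from the same $S_\Sigma$ yield the same subset of $V_\Sigma\times V_\Sigma$.

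\emph{Step 3 (explicit form and the "consequently" clause).} I would then invoke Lemma~\ref{lem:explicit-form-mediated-relation} to identify $\rightsquigarrow_{\mathrm{med}}=V_\Sigma\times V_\Sigma$, with $r^2$ pairs by Lemma~\ref{lem:finiteness-mediated-relation}. Finally I would combine this with Proposition~\ref{prop:canonical-star-coupling}, which supplies the direct star-shaped relation $\rightsquigarrow_{\mathrm{bulk}}$ on $V_\Sigma^{\mathrm{ext}}$. Together these give both structures claimed in the last sentence of the statement.

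The only point needing care is Step 2. "Canonical" must be read as independence of choices given $S_\Sigma$, and the indexing $k\mapsto(\mathcal C_{p_k},\Phi_k,\Psi_k)$ must not be a choice. I would handle this by citing the node-indexing theorem of \cite[Theorem~6.1]{RahmanMultiNodeSchoberPaper}, which identifies each localized sector with its node. Relabeling the nodes then permutes $V_\Sigma$ compatibly, and the full relation is invariant under any such permutation anyway. Everything else is bookkeeping.
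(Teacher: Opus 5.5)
Your proposal is correct and follows essentially the same route as the paper: the paper's proof cites the well-definedness lemmas, the explicit-form lemma giving $\rightsquigarrow_{\mathrm{med}}=V_\Sigma\times V_\Sigma$, and the finiteness lemmas, and then asserts canonicity on the grounds that both relations are built solely from the attachment functors in $S_\Sigma$. The differences are cosmetic: you invoke Proposition~\ref{prop:canonical-star-coupling} for the bulk part where the paper cites Lemmas~\ref{lem:well-defined-basic-coupling} and \ref{lem:finiteness-basic-coupling} directly, and you spell out the ``independence of choices'' reading of canonicity via the node indexing more carefully than the paper does.
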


\begin{proof}
The direct bulk/localized relation is the relation $\rightsquigarrow_{\mathrm{bulk}}$ of Definition~\ref{def:basic-functorial-coupling-relation}. The mediated node-to-node relation is the relation $\rightsquigarrow_{\mathrm{med}}$ of Definition~\ref{def:mediated-coupling-relation}. By Lemmas~\ref{lem:well-defined-basic-coupling} and \ref{lem:well-defined-mediated-relation}, both are well-defined. By Lemma~\ref{lem:explicit-form-mediated-relation}, the mediated relation is exactly the full relation on $V_\Sigma$. Their finiteness follows from Lemmas~\ref{lem:finiteness-basic-coupling} and \ref{lem:finiteness-mediated-relation}. Since both are determined entirely by the attachment functors already present in $S_\Sigma$, they are canonical.
\end{proof}

\subsection{The total functorial coupling relation}

We now combine the direct and mediated relations into a single package.

\begin{definition}[total functorial coupling relation]
\label{def:total-functorial-coupling-relation}
The \emph{total functorial coupling relation} associated with the finite-node schober package is the relation
\begin{equation}\label{eq:def-total-functorial-coupling}
\rightsquigarrow_\Sigma\ :=\ \rightsquigarrow_{\mathrm{bulk}}\ \cup\ \rightsquigarrow_{\mathrm{med}},
\end{equation}
viewed as a relation on the extended vertex set $V_\Sigma^{\mathrm{ext}}$, where $\rightsquigarrow_{\mathrm{med}}$ is included through the natural embedding $V_\Sigma\times V_\Sigma\hookrightarrow V_\Sigma^{\mathrm{ext}}\times V_\Sigma^{\mathrm{ext}}$.
\end{definition}

\begin{theorem}[functorial coupling theorem]
\label{thm:functorial-coupling-theorem}
Let
\[
S_\Sigma:=\Bigl(\mathcal C_{\mathrm{bulk}},\{\mathcal C_{p_k}\}_{k=1}^r,\{\Phi_k,\Psi_k\}_{k=1}^r,Sh(S_\Sigma)\Bigr)
\]
be the finite-node schober package associated with a finite-node conifold degeneration. Then $S_\Sigma$ determines a canonical finite relation
\[
\rightsquigarrow_\Sigma\ \subseteq\ V_\Sigma^{\mathrm{ext}}\times V_\Sigma^{\mathrm{ext}}
\]
whose restriction to pairs involving $v_{\mathrm{bulk}}$ records the direct bulk/localized couplings and whose restriction to $V_\Sigma\times V_\Sigma$ records the mediated node-to-node couplings.
\end{theorem}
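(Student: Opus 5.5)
The plan is to assemble the theorem directly from the lemmas of this section, taking $\rightsquigarrow_\Sigma$ to be the relation of Definition~\ref{def:total-functorial-coupling-relation}. There are four things to verify: that $\rightsquigarrow_\Sigma$ is a well-defined subset of $V_\Sigma^{\mathrm{ext}}\times V_\Sigma^{\mathrm{ext}}$, that it is finite, that it is canonical (determined by $S_\Sigma$ alone), and that its two restrictions are as claimed.

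\emph{Well-definedness.} Lemma~\ref{lem:well-defined-basic-coupling} places $\rightsquigarrow_{\mathrm{bulk}}$ inside $V_\Sigma^{\mathrm{ext}}\times V_\Sigma^{\mathrm{ext}}$. Lemma~\ref{lem:well-defined-mediated-relation} places $\rightsquigarrow_{\mathrm{med}}$ inside $V_\Sigma\times V_\Sigma$, which embeds in $V_\Sigma^{\mathrm{ext}}\times V_\Sigma^{\mathrm{ext}}$ via the inclusion $V_\Sigma\hookrightarrow V_\Sigma^{\mathrm{ext}}$ of Proposition~\ref{prop:extended-vertex-decomposition}. So the union is a well-defined relation on $V_\Sigma^{\mathrm{ext}}$.

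\emph{Finiteness.} By Lemmas~\ref{lem:finiteness-basic-coupling} and \ref{lem:finiteness-mediated-relation}, the union has at most $2r+r^2$ elements. In fact it has exactly that many. Every pair in $\rightsquigarrow_{\mathrm{bulk}}$ involves $v_{\mathrm{bulk}}$, while no pair in $V_\Sigma\times V_\Sigma$ does, because $v_{\mathrm{bulk}}\notin V_\Sigma$ by the disjoint union. Hence the two pieces are disjoint.

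\emph{Restrictions.} The same disjointness gives the restriction statements. The restriction of $\rightsquigarrow_\Sigma$ to pairs involving $v_{\mathrm{bulk}}$ meets $\rightsquigarrow_{\mathrm{med}}$ trivially, so it is exactly $\rightsquigarrow_{\mathrm{bulk}}$. By Lemma~\ref{lem:finiteness-basic-coupling} this is the set of pairs $(v_k,v_{\mathrm{bulk}})$ and $(v_{\mathrm{bulk}},v_k)$, which record $\Phi_k$ and $\Psi_k$. The pair $(v_{\mathrm{bulk}},v_{\mathrm{bulk}})$ does not occur. Symmetrically, the restriction to $V_\Sigma\times V_\Sigma$ meets $\rightsquigarrow_{\mathrm{bulk}}$ trivially, so it is exactly $\rightsquigarrow_{\mathrm{med}}$. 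By Lemma~\ref{lem:explicit-form-mediated-relation} and Proposition~\ref{prop:canonical-mediated-coupling}, this records the mediated composites $\Psi_j\circ\Phi_i$.

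\emph{Canonicity.} Each constituent depends only on the following inputs from $S_\Sigma$: the indexing of the sectors $\mathcal C_{p_k}$ by $\Sigma$, the bulk category (through Proposition~\ref{prop:extended-vertex-package-canonical}), and the source-target data of $\Phi_k,\Psi_k$. No choices are made, so by Propositions~\ref{prop:canonical-star-coupling} and \ref{prop:canonical-mediated-coupling} the union is canonical.

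The main obstacle is the canonicity claim. One must make sure that the vertex labels $v_k$ are not an extra choice, i.e.\ that the bijection $v_k\leftrightarrow\mathcal C_{p_k}$ is fixed by the node indexing of \cite[Theorem~6.1]{RahmanMultiNodeSchoberPaper} and Remark~\ref{rem:partII-same-node-indexing}. I will address this by stating it explicitly and noting that any relabeling of $\Sigma$ transports $\rightsquigarrow_\Sigma$ compatibly.
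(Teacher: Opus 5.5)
Your proposal is correct and follows the paper's argument: $\rightsquigarrow_\Sigma$ is the union from Definition~\ref{def:total-functorial-coupling-relation}, and its canonicity and finiteness come from Propositions~\ref{prop:canonical-star-coupling} and \ref{prop:canonical-mediated-coupling}. The one difference is that you spell out the disjointness of the two pieces, which gives the restriction claims and the exact count $r^2+2r$; the paper leaves this implicit in the proof and only states it afterwards in Lemmas~\ref{lem:localized-restriction-incidence} and \ref{lem:bulk-localized-restriction-incidence}.
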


\begin{proof}
By Definition~\ref{def:total-functorial-coupling-relation}, the relation $\rightsquigarrow_\Sigma$ is obtained by taking the union of the direct bulk/localized relation $\rightsquigarrow_{\mathrm{bulk}}$ and the mediated relation $\rightsquigarrow_{\mathrm{med}}$. Proposition~\ref{prop:canonical-star-coupling} identifies the first component, and Proposition~\ref{prop:canonical-mediated-coupling} identifies the second. Since both are canonical and finite, their union is likewise canonical and finite.
\end{proof}

\begin{remark}
\label{rem:coupling-relation-pre-incidence}
Theorem~\ref{thm:functorial-coupling-theorem} isolates the first genuinely interaction-theoretic output of the paper. It is still intentionally weaker than a numerical incidence matrix or a full quiver. At this stage the paper extracts only the functorial coupling pattern already forced by the finite-node schober package. The next section converts this pattern into the corresponding functorial incidence package.
\end{remark}

\section{Functorial incidence package}\label{sec:functorial-incidence-package}

The previous section extracted from the finite-node schober package a canonical finite coupling relation
\[
\rightsquigarrow_\Sigma\ \subseteq\ V_\Sigma^{\mathrm{ext}}\times V_\Sigma^{\mathrm{ext}},
\qquad
V_\Sigma^{\mathrm{ext}}:=V_\Sigma\sqcup\{v_{\mathrm{bulk}}\}.
\]
The purpose of the present section is to package this relation as a single incidence object attached to the degeneration. This package is the first genuine interaction-theoretic output of the paper. It records, at the level of finite algebraic data, the direct bulk/localized couplings induced by the attachment functors and the mediated node-to-node couplings induced by their compositions. The key point is that this incidence package is not postulated externally: it is canonically extracted from the finite-node schober datum itself.

\subsection{Definition of the incidence package}

We begin by isolating the finite relation of Theorem~\ref{thm:functorial-coupling-theorem} as a single object.

\begin{definition}[functorial incidence package]
\label{def:functorial-incidence-package}
Let
\[
S_\Sigma:=\Bigl(\mathcal C_{\mathrm{bulk}},\{\mathcal C_{p_k}\}_{k=1}^r,\{\Phi_k,\Psi_k\}_{k=1}^r,Sh(S_\Sigma)\Bigr)
\]
be the finite-node schober package of Definition~\ref{def:partII-finite-node-schober-package}. The \emph{functorial incidence package} associated with \(S_\Sigma\) is the pair
\begin{equation}\label{eq:def-functorial-incidence-package}
\mathfrak{I}_\Sigma:=\bigl(V_\Sigma^{\mathrm{ext}},\rightsquigarrow_\Sigma\bigr),
\end{equation}
where \(V_\Sigma^{\mathrm{ext}}\) is the extended vertex set of Definition~\ref{def:extended-vertex-set} and \(\rightsquigarrow_\Sigma\) is the total functorial coupling relation of Definition~\ref{def:total-functorial-coupling-relation}.
\end{definition}

\begin{remark}
\label{rem:incidence-package-level}
The package \(\mathfrak{I}_\Sigma\) is intentionally defined as a relation object rather than as a numerical incidence matrix or a full quiver. At the level of the present section, the schober package determines a finite source-target relation on the extended vertex set. Numerical multiplicities, matrix entries, and quiver-theoretic refinements belong to the next stage of the construction.
\end{remark}

\begin{remark}
\label{rem:incidence-package-content}
The package
\[
\mathfrak{I}_\Sigma:=\bigl(V_\Sigma^{\mathrm{ext}},\rightsquigarrow_\Sigma\bigr)
\]
contains two types of information simultaneously:
\begin{enumerate}
\item the direct bulk/localized coupling pattern, encoded by the ordered pairs
\[
(v_k,v_{\mathrm{bulk}}),
\qquad
(v_{\mathrm{bulk}},v_k),
\qquad
1\le k\le r;
\]
\item the mediated node-to-node coupling pattern, encoded by the ordered pairs
\[
(v_i,v_j)
\]
for which the composite
\[
\Psi_j\circ\Phi_i:\mathcal C_{p_i}\to \mathcal C_{p_j}
\]
is part of the schober-induced coupling structure.
\end{enumerate}
Thus \(\mathfrak{I}_\Sigma\) records both framing and mediation in a single finite package.
\end{remark}

\subsection{Basic properties of the incidence package}

We now record the structural properties of \(\mathfrak{I}_\Sigma\).

\begin{lemma}[finiteness of the incidence package]
\label{lem:finiteness-incidence-package}
The functorial incidence package
\[
\mathfrak{I}_\Sigma:=\bigl(V_\Sigma^{\mathrm{ext}},\rightsquigarrow_\Sigma\bigr)
\]
is finite. More precisely, \(V_\Sigma^{\mathrm{ext}}\) has cardinality \(r+1\), and \(\rightsquigarrow_\Sigma\) is a finite subset of
\[
V_\Sigma^{\mathrm{ext}}\times V_\Sigma^{\mathrm{ext}}.
\]
\end{lemma}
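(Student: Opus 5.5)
The plan is to prove the two assertions of Lemma~\ref{lem:finiteness-incidence-package} separately, each time invoking results already established, so that no new categorical input is needed.

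\textbf{Vertex set.} The cardinality statement is exactly Lemma~\ref{lem:finiteness-extended-vertex-set}. Applied with $|\Sigma|=r<\infty$, it gives $|V_\Sigma^{\mathrm{ext}}|=r+1$.

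\textbf{The relation.} By Definition~\ref{def:total-functorial-coupling-relation},
\[
\rightsquigarrow_\Sigma=\rightsquigarrow_{\mathrm{bulk}}\cup\rightsquigarrow_{\mathrm{med}},
\]
where $\rightsquigarrow_{\mathrm{med}}$ sits inside $V_\Sigma^{\mathrm{ext}}\times V_\Sigma^{\mathrm{ext}}$ via the natural embedding $V_\Sigma\times V_\Sigma\hookrightarrow V_\Sigma^{\mathrm{ext}}\times V_\Sigma^{\mathrm{ext}}$. Hence $\rightsquigarrow_\Sigma$ is a subset of $V_\Sigma^{\mathrm{ext}}\times V_\Sigma^{\mathrm{ext}}$, which has $(r+1)^2$ elements. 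Any subset of a finite set is finite, so this already proves the claim.

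For a sharper statement I will also count $\rightsquigarrow_\Sigma$ explicitly:
\begin{enumerate}
\item Lemma~\ref{lem:finiteness-basic-coupling} gives exactly $2r$ pairs in $\rightsquigarrow_{\mathrm{bulk}}$.
\item Lemma~\ref{lem:finiteness-mediated-relation} gives exactly $r^2$ pairs in $\rightsquigarrow_{\mathrm{med}}$.
\item The two relations are disjoint. Every pair in $\rightsquigarrow_{\mathrm{bulk}}$ involves $v_{\mathrm{bulk}}$, whereas every pair in $\rightsquigarrow_{\mathrm{med}}$ lies in $V_\Sigma\times V_\Sigma$. Since $v_{\mathrm{bulk}}\notin V_\Sigma$ by Proposition~\ref{prop:extended-vertex-decomposition}, no pair can lie in both.
\end{enumerate}
It follows that $|\rightsquigarrow_\Sigma|=r^2+2r$. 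This equals $(r+1)^2$ minus the single missing loop $(v_{\mathrm{bulk}},v_{\mathrm{bulk}})$, which is consistent with the first bound.

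There is no substantial obstacle. The only point needing care is the disjointness of the union, and the disjoint decomposition of $V_\Sigma^{\mathrm{ext}}$ settles it.
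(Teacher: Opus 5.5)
Your proof is correct. The vertex count is handled exactly as in the paper, via Lemma~\ref{lem:finiteness-extended-vertex-set}.

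For the relation, the two arguments differ slightly. The paper cites Theorem~\ref{thm:functorial-coupling-theorem}, whose finiteness claim rests on the finiteness of the two components $\rightsquigarrow_{\mathrm{bulk}}$ and $\rightsquigarrow_{\mathrm{med}}$ (Lemmas~\ref{lem:finiteness-basic-coupling} and \ref{lem:finiteness-mediated-relation}). Your primary argument instead observes that any subset of the $(r+1)^2$-element set $V_\Sigma^{\mathrm{ext}}\times V_\Sigma^{\mathrm{ext}}$ is finite. This is more elementary and makes finiteness independent of how the relation is actually defined.

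Your additional count $|\rightsquigarrow_\Sigma|=r^2+2r$, which uses the disjointness of the bulk and mediated parts, goes beyond what the paper states in this lemma. It agrees with the framed matrices of Section~\ref{sec:examples}, where the only zero entry is the bulk--bulk one.
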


\begin{proof}
By Lemma~\ref{lem:finiteness-extended-vertex-set}, one has
\[
|V_\Sigma^{\mathrm{ext}}|=r+1.
\]
By Theorem~\ref{thm:functorial-coupling-theorem}, the relation \(\rightsquigarrow_\Sigma\) is finite. Therefore the pair
\[
\mathfrak{I}_\Sigma:=\bigl(V_\Sigma^{\mathrm{ext}},\rightsquigarrow_\Sigma\bigr)
\]
is a finite incidence object.
\end{proof}

\begin{lemma}[restriction to the localized vertex set]
\label{lem:localized-restriction-incidence}
The restriction of \(\rightsquigarrow_\Sigma\) to
\[
V_\Sigma\times V_\Sigma
\]
is exactly the mediated coupling relation \(\rightsquigarrow_{\mathrm{med}}\) of Definition~\ref{def:mediated-coupling-relation}.
\end{lemma}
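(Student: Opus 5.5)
The plan is a direct set-theoretic computation from Definition~\ref{def:total-functorial-coupling-relation}. By that definition,
\[
\rightsquigarrow_\Sigma=\rightsquigarrow_{\mathrm{bulk}}\cup\rightsquigarrow_{\mathrm{med}},
\]
with $\rightsquigarrow_{\mathrm{med}}$ included through the embedding $V_\Sigma\times V_\Sigma\hookrightarrow V_\Sigma^{\mathrm{ext}}\times V_\Sigma^{\mathrm{ext}}$. Intersection distributes over union, so the restriction to $V_\Sigma\times V_\Sigma$ is
\[
\bigl(\rightsquigarrow_{\mathrm{bulk}}\cap (V_\Sigma\times V_\Sigma)\bigr)\ \cup\ \bigl(\rightsquigarrow_{\mathrm{med}}\cap (V_\Sigma\times V_\Sigma)\bigr).
\]
The two pieces will be computed separately.

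For the first piece, I would invoke Lemma~\ref{lem:finiteness-basic-coupling}. It shows that $\rightsquigarrow_{\mathrm{bulk}}$ consists exactly of the pairs $(v_k,v_{\mathrm{bulk}})$ and $(v_{\mathrm{bulk}},v_k)$ for $1\le k\le r$. Each of these pairs has $v_{\mathrm{bulk}}$ as one coordinate. By Proposition~\ref{prop:extended-vertex-decomposition}, the union $V_\Sigma^{\mathrm{ext}}=V_\Sigma\sqcup\{v_{\mathrm{bulk}}\}$ is disjoint, so $v_{\mathrm{bulk}}\notin V_\Sigma$. Hence no such pair lies in $V_\Sigma\times V_\Sigma$, and the first intersection is empty. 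This disjointness is the only step carrying real content. It is what guarantees that the star-shaped structure of Proposition~\ref{prop:canonical-star-coupling} contributes nothing on the localized block.

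For the second piece, $\rightsquigarrow_{\mathrm{med}}\subseteq V_\Sigma\times V_\Sigma$ by Definition~\ref{def:mediated-coupling-relation}. Its intersection with $V_\Sigma\times V_\Sigma$ is therefore all of $\rightsquigarrow_{\mathrm{med}}$. The restriction of $\rightsquigarrow_\Sigma$ to $V_\Sigma\times V_\Sigma$ thus equals $\rightsquigarrow_{\mathrm{med}}$. By Lemma~\ref{lem:explicit-form-mediated-relation}, this is in fact the full relation $V_\Sigma\times V_\Sigma$.

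I do not expect a genuine obstacle. The one point to state carefully is that the natural embedding of $V_\Sigma\times V_\Sigma$ into $V_\Sigma^{\mathrm{ext}}\times V_\Sigma^{\mathrm{ext}}$ is injective and identifies $\rightsquigarrow_{\mathrm{med}}$ with its image. This makes the restriction statement an equality of subsets rather than merely an identification.
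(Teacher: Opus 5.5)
Your proposal is correct and follows the same route as the paper's proof. You unfold \(\rightsquigarrow_\Sigma=\rightsquigarrow_{\mathrm{bulk}}\cup\rightsquigarrow_{\mathrm{med}}\), note that every pair of \(\rightsquigarrow_{\mathrm{bulk}}\) involves \(v_{\mathrm{bulk}}\) and so misses \(V_\Sigma\times V_\Sigma\), and note that \(\rightsquigarrow_{\mathrm{med}}\) lies entirely in \(V_\Sigma\times V_\Sigma\); your version just makes explicit the disjointness \(v_{\mathrm{bulk}}\notin V_\Sigma\) and the distribution of intersection over union, which the paper uses tacitly.
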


\begin{proof}
By Definition~\ref{def:total-functorial-coupling-relation},
\[
\rightsquigarrow_\Sigma\ :=\ \rightsquigarrow_{\mathrm{bulk}}\cup \rightsquigarrow_{\mathrm{med}}.
\]
The relation \(\rightsquigarrow_{\mathrm{bulk}}\) only involves pairs containing the bulk vertex \(v_{\mathrm{bulk}}\), while \(\rightsquigarrow_{\mathrm{med}}\) is a relation on \(V_\Sigma\times V_\Sigma\). Hence restricting \(\rightsquigarrow_\Sigma\) to \(V_\Sigma\times V_\Sigma\) removes the bulk/localized part and leaves precisely \(\rightsquigarrow_{\mathrm{med}}\).
\end{proof}

\begin{lemma}[restriction to bulk/localized pairs]
\label{lem:bulk-localized-restriction-incidence}
The restriction of \(\rightsquigarrow_\Sigma\) to pairs involving \(v_{\mathrm{bulk}}\) is exactly the basic bulk/localized coupling relation \(\rightsquigarrow_{\mathrm{bulk}}\) of Definition~\ref{def:basic-functorial-coupling-relation}.
\end{lemma}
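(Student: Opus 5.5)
The plan is to argue directly from Definition~\ref{def:total-functorial-coupling-relation}, in parallel with the proof of Lemma~\ref{lem:localized-restriction-incidence}. Let
\[
B:=\{(a,b)\in V_\Sigma^{\mathrm{ext}}\times V_\Sigma^{\mathrm{ext}} : a=v_{\mathrm{bulk}}\ \text{or}\ b=v_{\mathrm{bulk}}\}
\]
be the set of pairs involving the bulk vertex. The claim is then the set-theoretic identity
\[
\rightsquigarrow_\Sigma\cap B=\rightsquigarrow_{\mathrm{bulk}}.
\]
By definition, $\rightsquigarrow_\Sigma=\rightsquigarrow_{\mathrm{bulk}}\cup\rightsquigarrow_{\mathrm{med}}$. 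Intersection distributes over union, so
\[
\rightsquigarrow_\Sigma\cap B=(\rightsquigarrow_{\mathrm{bulk}}\cap B)\cup(\rightsquigarrow_{\mathrm{med}}\cap B).
\]
I would compute the two pieces separately.

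For the first piece, I would invoke Lemma~\ref{lem:finiteness-basic-coupling}. It shows that $\rightsquigarrow_{\mathrm{bulk}}$ consists exactly of the pairs $(v_k,v_{\mathrm{bulk}})$ and $(v_{\mathrm{bulk}},v_k)$ for $1\le k\le r$. Each of these pairs involves $v_{\mathrm{bulk}}$, so $\rightsquigarrow_{\mathrm{bulk}}\subseteq B$. Hence $\rightsquigarrow_{\mathrm{bulk}}\cap B=\rightsquigarrow_{\mathrm{bulk}}$.

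For the second piece, Definition~\ref{def:mediated-coupling-relation} places $\rightsquigarrow_{\mathrm{med}}$ inside $V_\Sigma\times V_\Sigma$. Via the embedding of Definition~\ref{def:total-functorial-coupling-relation}, it sits in $V_\Sigma^{\mathrm{ext}}\times V_\Sigma^{\mathrm{ext}}$ as pairs whose two entries both lie in $V_\Sigma$. I would then use the disjointness $V_\Sigma^{\mathrm{ext}}=V_\Sigma\sqcup\{v_{\mathrm{bulk}}\}$ from Proposition~\ref{prop:extended-vertex-decomposition}. It gives $v_{\mathrm{bulk}}\notin V_\Sigma$, so $(V_\Sigma\times V_\Sigma)\cap B=\emptyset$, and therefore $\rightsquigarrow_{\mathrm{med}}\cap B=\emptyset$. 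Combining the two computations yields the lemma.

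The only step requiring care is the disjointness used in the second piece. It is exactly what prevents mediated pairs from contaminating the bulk part. It holds by construction, since $v_{\mathrm{bulk}}$ is a formal symbol adjoined disjointly in Definition~\ref{def:extended-vertex-set}, so I expect no real obstacle. As a by-product, one also obtains that $\rightsquigarrow_\Sigma$ is the disjoint union of its bulk and localized restrictions.
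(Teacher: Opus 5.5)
Your proposal is correct and follows essentially the paper's argument: write \(\rightsquigarrow_\Sigma=\rightsquigarrow_{\mathrm{bulk}}\cup\rightsquigarrow_{\mathrm{med}}\), and observe that the mediated part lies in \(V_\Sigma\times V_\Sigma\), so it contributes no pairs involving \(v_{\mathrm{bulk}}\). You additionally make explicit two facts the paper's proof uses tacitly: the inclusion of \(\rightsquigarrow_{\mathrm{bulk}}\) in the set of bulk-involving pairs, and the disjointness \(v_{\mathrm{bulk}}\notin V_\Sigma\).
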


\begin{proof}
Again, by Definition~\ref{def:total-functorial-coupling-relation},
\[
\rightsquigarrow_\Sigma\ :=\ \rightsquigarrow_{\mathrm{bulk}}\cup \rightsquigarrow_{\mathrm{med}}.
\]
The relation \(\rightsquigarrow_{\mathrm{med}}\) is supported entirely on \(V_\Sigma\times V_\Sigma\) and therefore contributes no pairs involving \(v_{\mathrm{bulk}}\). Hence the restriction of \(\rightsquigarrow_\Sigma\) to pairs involving \(v_{\mathrm{bulk}}\) is exactly \(\rightsquigarrow_{\mathrm{bulk}}\).
\end{proof}

\begin{proposition}[decomposition of the incidence package]
\label{prop:decomposition-incidence-package}
The functorial incidence package admits the canonical decomposition
\begin{equation}\label{eq:decomposition-incidence-package}
\mathfrak{I}_\Sigma
=
\bigl(V_\Sigma^{\mathrm{ext}},\rightsquigarrow_{\mathrm{bulk}}\cup \rightsquigarrow_{\mathrm{med}}\bigr),
\end{equation}
where the bulk/localized component is supported on pairs involving \(v_{\mathrm{bulk}}\) and the mediated component is supported on \(V_\Sigma\times V_\Sigma\).
\end{proposition}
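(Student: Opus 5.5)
The plan is to read the proposition off Definition~\ref{def:total-functorial-coupling-relation} together with the two restriction lemmas just proved. The decomposition \eqref{eq:decomposition-incidence-package} is then a rewriting of Definition~\ref{def:functorial-incidence-package}, in which \(\rightsquigarrow_\Sigma\) is replaced by the union defining it. What remains is to justify the two support claims and the word ``canonical.''

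First I will split \(V_\Sigma^{\mathrm{ext}}\times V_\Sigma^{\mathrm{ext}}\) into two disjoint pieces:
\[
V_\Sigma^{\mathrm{ext}}\times V_\Sigma^{\mathrm{ext}} = (V_\Sigma\times V_\Sigma)\ \sqcup\ B,
\qquad
B:=\{(a,b)\,:\, a=v_{\mathrm{bulk}}\text{ or } b=v_{\mathrm{bulk}}\}.
\]
This is disjoint because \(v_{\mathrm{bulk}}\notin V_\Sigma\), by Proposition~\ref{prop:extended-vertex-decomposition}. By Lemma~\ref{lem:finiteness-basic-coupling}, every pair in \(\rightsquigarrow_{\mathrm{bulk}}\) has the form \((v_k,v_{\mathrm{bulk}})\) or \((v_{\mathrm{bulk}},v_k)\), so \(\rightsquigarrow_{\mathrm{bulk}}\subseteq B\). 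By Definition~\ref{def:mediated-coupling-relation} and the embedding in Definition~\ref{def:total-functorial-coupling-relation}, \(\rightsquigarrow_{\mathrm{med}}\subseteq V_\Sigma\times V_\Sigma\). Hence the union is actually a disjoint union. Intersecting with each piece recovers its component, which is exactly the content of Lemmas~\ref{lem:localized-restriction-incidence} and \ref{lem:bulk-localized-restriction-incidence}.

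Second, for canonicity, I will argue as follows. The pieces \(B\) and \(V_\Sigma\times V_\Sigma\) are determined by the extended vertex package \((V_\Sigma^{\mathrm{ext}},v_{\mathrm{bulk}})\), which is canonical by Proposition~\ref{prop:extended-vertex-package-canonical}. The two components are canonical by Propositions~\ref{prop:canonical-star-coupling} and \ref{prop:canonical-mediated-coupling}. Consequently the decomposition is obtained by intersecting the canonical relation \(\rightsquigarrow_\Sigma\) of Theorem~\ref{thm:functorial-coupling-theorem} with a canonical partition, and so it involves no choices.

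The only point needing care is the disjointness of \(v_{\mathrm{bulk}}\) from \(V_\Sigma\). It guarantees that the components do not overlap and that the decomposition is intrinsic rather than merely a presentation. This is supplied by construction in Definition~\ref{def:extended-vertex-set}, so I expect no real obstacle.
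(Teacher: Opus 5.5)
Your proposal is correct and follows the same route as the paper. The paper's proof simply cites the definitions of \(\mathfrak{I}_\Sigma\) and \(\rightsquigarrow_\Sigma\) together with the two restriction lemmas; your explicit disjoint partition of \(V_\Sigma^{\mathrm{ext}}\times V_\Sigma^{\mathrm{ext}}\) and your canonicity remark spell out what the paper leaves as ``immediate.''
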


\begin{proof}
This is immediate from Definition~\ref{def:functorial-incidence-package}, Definition~\ref{def:total-functorial-coupling-relation}, Lemma~\ref{lem:localized-restriction-incidence}, and Lemma~\ref{lem:bulk-localized-restriction-incidence}.
\end{proof}

\begin{remark}
\label{rem:framed-mediation-split}
Proposition~\ref{prop:decomposition-incidence-package} isolates the two structural layers of the incidence package. The relation \(\rightsquigarrow_{\mathrm{bulk}}\) is the framing layer: it couples each localized sector to the common bulk sector. The relation \(\rightsquigarrow_{\mathrm{med}}\) is the mediation layer: it records the induced localized-to-localized coupling pattern through the bulk. The package \(\mathfrak{I}_\Sigma\) is the first object in the paper that records both layers at once.
\end{remark}

\subsection{Canonical determination by the schober datum}

We now show that the package \(\mathfrak{I}_\Sigma\) is canonically determined by the finite-node schober package.

\begin{theorem}[canonical functorial incidence package]
\label{thm:canonical-functorial-incidence-package}
Let
\[
S_\Sigma:=\Bigl(\mathcal C_{\mathrm{bulk}},\{\mathcal C_{p_k}\}_{k=1}^r,\{\Phi_k,\Psi_k\}_{k=1}^r,Sh(S_\Sigma)\Bigr)
\]
be the finite-node schober package associated with a finite-node conifold degeneration. Then \(S_\Sigma\) canonically determines a functorial incidence package
\[
\mathfrak{I}_\Sigma:=\bigl(V_\Sigma^{\mathrm{ext}},\rightsquigarrow_\Sigma\bigr).
\]
\end{theorem}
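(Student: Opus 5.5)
The plan is to assemble the statement directly from the results of the two preceding sections, separating the two components of the pair $\mathfrak I_\Sigma=(V_\Sigma^{\mathrm{ext}},\rightsquigarrow_\Sigma)$ and showing that each is determined by $S_\Sigma$ with no auxiliary choices. For the vertex component, I would invoke Proposition~\ref{prop:extended-vertex-package-canonical}. It shows that $V_\Sigma^{\mathrm{ext}}=V_\Sigma\sqcup\{v_{\mathrm{bulk}}\}$ is fixed by the node indexing of the localized sectors $\{\mathcal C_{p_k}\}_{k=1}^r$ together with the bulk category $\mathcal C_{\mathrm{bulk}}$. The localized part uses the common indexing of Remark~\ref{rem:partII-same-node-indexing}, which ensures that $v_k$ is attached to $\mathcal C_{p_k}$ unambiguously. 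By Lemma~\ref{lem:finiteness-extended-vertex-set}, this vertex set has cardinality $r+1$.

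For the relation component, I would apply Theorem~\ref{thm:functorial-coupling-theorem}. It gives $\rightsquigarrow_\Sigma=\rightsquigarrow_{\mathrm{bulk}}\cup\rightsquigarrow_{\mathrm{med}}$ as a canonical finite subset of $V_\Sigma^{\mathrm{ext}}\times V_\Sigma^{\mathrm{ext}}$. Concretely, Lemma~\ref{lem:finiteness-basic-coupling} identifies $\rightsquigarrow_{\mathrm{bulk}}$ as the $2r$ pairs $(v_k,v_{\mathrm{bulk}})$ and $(v_{\mathrm{bulk}},v_k)$, read off from the source and target of $\Phi_k$ and $\Psi_k$. Lemma~\ref{lem:explicit-form-mediated-relation} identifies $\rightsquigarrow_{\mathrm{med}}$ as $V_\Sigma\times V_\Sigma$, coming from the composites $\Psi_j\circ\Phi_i$. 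I would then cite Proposition~\ref{prop:decomposition-incidence-package} to record that these two pieces have disjoint supports, so the union involves no choice or overlap. Lemma~\ref{lem:finiteness-incidence-package} then shows that the resulting pair is a finite incidence object.

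The substantive point, and the step I expect to need the most care, is making ``canonically'' precise. I would take it to mean that the construction depends only on the source and target pattern of the attachment functors in $S_\Sigma$ and on the node indexing, and not on any choice of objects or presentations inside the categories. As a check, I would argue that replacing each $\mathcal C_{p_k}$, $\mathcal C_{\mathrm{bulk}}$, $\Phi_k$, $\Psi_k$ by equivalent data, compatibly with the indexing by $\Sigma$, leaves both $V_\Sigma^{\mathrm{ext}}$ and every defining clause of $\rightsquigarrow_\Sigma$ unchanged, since each clause only asks whether a functor with a given source and target exists. The full invariance statement is deferred to Section~\ref{sec:invariance-under-equivalence}, so here I would only note that no choices enter. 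Combining the vertex and relation components then yields $\mathfrak I_\Sigma$ as a datum canonically determined by $S_\Sigma$.
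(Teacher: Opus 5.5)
Your proposal is correct and follows essentially the same route as the paper, which likewise handles the vertex component via Proposition~\ref{prop:extended-vertex-package-canonical} and the relation component via Definitions~\ref{def:basic-functorial-coupling-relation}, \ref{def:mediated-coupling-relation} and \ref{def:total-functorial-coupling-relation}, and then observes that all the data involved are part of $S_\Sigma$. Your additional citations (the explicit forms of the two relations, the disjoint-support decomposition, and the choice-independence check that anticipates Section~\ref{sec:invariance-under-equivalence}) are harmless elaborations, and routing $V_\Sigma$ through the node indexing of the $\mathcal C_{p_k}$ (Remark~\ref{rem:partII-same-node-indexing}) rather than through $A_\Sigma$ matches the claim that $S_\Sigma$ alone suffices slightly more closely than the paper's own wording does.
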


\begin{proof}
The vertex component \(V_\Sigma^{\mathrm{ext}}\) is canonically determined by the state-data package \(A_\Sigma=(V_\Sigma,E_\Sigma,c_\Sigma)\) together with the bulk category \(\mathcal C_{\mathrm{bulk}}\); see Proposition~\ref{prop:extended-vertex-package-canonical}. The relation component \(\rightsquigarrow_\Sigma\) is canonically determined by the attachment functors \(\Phi_k,\Psi_k\) through Definition~\ref{def:basic-functorial-coupling-relation}, Definition~\ref{def:mediated-coupling-relation}, and Definition~\ref{def:total-functorial-coupling-relation}. Since all of these data are part of the schober package \(S_\Sigma\), the incidence package
\[
\mathfrak{I}_\Sigma:=\bigl(V_\Sigma^{\mathrm{ext}},\rightsquigarrow_\Sigma\bigr)
\]
is canonically determined by \(S_\Sigma\).
\end{proof}

\begin{corollary}[first interaction-theoretic output]
\label{cor:first-interaction-output}
The finite-node schober package determines, before any numerical decategorification or quiver assembly, a canonical finite interaction object
\[
\mathfrak{I}_\Sigma:=\bigl(V_\Sigma^{\mathrm{ext}},\rightsquigarrow_\Sigma\bigr).
\]
\end{corollary}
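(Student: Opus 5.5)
The corollary is essentially a restatement of Theorem~\ref{thm:canonical-functorial-incidence-package}, with two emphases to justify: that $\mathfrak{I}_\Sigma$ is produced \emph{before} any numerical decategorification or quiver assembly, and that it is a \emph{finite} interaction object. I will therefore prove it by invoking three earlier results in order and checking that none of them uses a numerical or quiver-theoretic input.

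First, I will apply Theorem~\ref{thm:canonical-functorial-incidence-package} to obtain that $S_\Sigma$ canonically determines
\[
\mathfrak{I}_\Sigma=\bigl(V_\Sigma^{\mathrm{ext}},\rightsquigarrow_\Sigma\bigr).
\]
Second, I will invoke Lemma~\ref{lem:finiteness-incidence-package} to get finiteness. That lemma gives $|V_\Sigma^{\mathrm{ext}}|=r+1$ and shows that $\rightsquigarrow_\Sigma$ is a finite subset of $V_\Sigma^{\mathrm{ext}}\times V_\Sigma^{\mathrm{ext}}$. Concretely, by Lemmas~\ref{lem:finiteness-basic-coupling} and \ref{lem:finiteness-mediated-relation}, together with the disjoint supports in Proposition~\ref{prop:decomposition-incidence-package}, the relation consists of exactly $2r+r^2$ ordered pairs.

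Third, I will justify the phrase ``before any numerical decategorification or quiver assembly'' by tracing the dependencies. The construction of $\rightsquigarrow_\Sigma$ uses only four definitions: Definitions~\ref{def:basic-functorial-coupling-relation}, \ref{def:mediated-coupling-relation} and \ref{def:total-functorial-coupling-relation}, plus the vertex data of Proposition~\ref{prop:extended-vertex-package-canonical}. None of these refers to an incidence matrix, arrow multiplicities, or the package $\mathfrak Q_\Sigma$. This also matches Remark~\ref{rem:incidence-package-level}, which fixes $\mathfrak{I}_\Sigma$ as a relation object.

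No step here is a real obstacle. The only point needing care is to make ``interaction object'' precise. I would define it simply as a finite set equipped with a binary relation, encoding framing and mediation as in Remark~\ref{rem:framed-mediation-split}. The corollary then follows directly from the three steps above.
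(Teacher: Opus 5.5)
Your proposal is correct and follows the paper's route: the paper proves the corollary in one line as an immediate consequence of Theorem~\ref{thm:canonical-functorial-incidence-package}. Your extra appeal to Lemma~\ref{lem:finiteness-incidence-package}, with the count of $2r+r^2$ ordered pairs, is a useful addition, since it justifies the word ``finite,'' which that theorem does not itself state.
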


\begin{proof}
This is an immediate consequence of Theorem~\ref{thm:canonical-functorial-incidence-package}.
\end{proof}

\begin{remark}
\label{rem:first-main-theorem-role}
Theorem~\ref{thm:canonical-functorial-incidence-package} is the first main theorem of the present paper. It is the precise mathematical expression of the claim that the finite-node schober package carries a canonical incidence structure. The theorem is deliberately stated at the relation level. This is the correct level of generality before introducing numerical incidence data or quiver-theoretic multiplicities.
\end{remark}

\subsection{Incidence relation as a relation object}

For later use, it is convenient to emphasize that \(\mathfrak{I}_\Sigma\) may be viewed as a finite relation object.

\begin{definition}[incidence relation object]
\label{def:incidence-relation-object}
The incidence relation object associated with the degeneration is the finite pair
\begin{equation}\label{eq:def-incidence-relation-object}
\bigl(V_\Sigma^{\mathrm{ext}},R_\Sigma\bigr),
\qquad
R_\Sigma:=\rightsquigarrow_\Sigma\ \subseteq\ V_\Sigma^{\mathrm{ext}}\times V_\Sigma^{\mathrm{ext}}.
\end{equation}
Equivalently, this relation object is exactly the functorial incidence package
\[
\mathfrak{I}_\Sigma:=\bigl(V_\Sigma^{\mathrm{ext}},\rightsquigarrow_\Sigma\bigr).
\]
\end{definition}

\begin{remark}
\label{rem:relation-object-precedes-matrix}
Definition~\ref{def:incidence-relation-object} records the minimal algebraic content of the interaction layer: a finite set together with a distinguished finite relation. This relation object is the correct precursor of the numerical incidence data constructed in the next section. In particular, it avoids introducing a matrix or an arrow-counting formalism before the relevant decategorification rule has been specified.
\end{remark}

The next section decategorifies the relation object \(\mathfrak{I}_\Sigma\) to obtain the corresponding algebraic incidence data.

\section{Decategorified incidence data}\label{sec:decategorified-incidence-data}

The purpose of this section is to pass from the functorial incidence package
\[
\mathfrak{I}_\Sigma:=\bigl(V_\Sigma^{\mathrm{ext}},\rightsquigarrow_\Sigma\bigr)
\]
to a purely algebraic incidence package. The key point is that Theorem~\ref{thm:canonical-functorial-incidence-package} already provides a canonical finite relation on the extended vertex set. The minimal and fully rigorous decategorification of such a relation is its characteristic function. This yields a canonical \(\{0,1\}\)-valued incidence matrix attached to the finite-node schober package. At the level of the present paper, this is the correct algebraic incidence datum: it records whether a given coupling relation is present or absent, without yet imposing additional numerical multiplicities beyond those forced by the relation itself.

\begin{remark}
\label{rem:binary-choice-section6}
At the level of the present paper, the passage from
\[
\mathfrak{I}_\Sigma:=\bigl(V_\Sigma^{\mathrm{ext}},\rightsquigarrow_\Sigma\bigr)
\]
to the algebraic incidence data is a support-level decategorification. Concretely, the matrix \(I_\Sigma\) is the characteristic matrix of the relation \(\rightsquigarrow_\Sigma\), so its entries lie in \(\{0,1\}\) and record only the presence or absence of coupling channels. Thus Section~\ref{sec:decategorified-incidence-data} does not yet impose a weighted interaction law. This binary choice is deliberate: it is the unique canonical decategorification available from the current theorem package without introducing additional invariants not yet justified. See Appendix~\ref{app:binary-decategorification}.
\end{remark}

\subsection{Binary decategorification of the relation object}

We first pass from the relation object to its indicator function.

\begin{definition}[binary incidence function]
\label{def:binary-incidence-function}
Let
\[
\mathfrak{I}_\Sigma:=\bigl(V_\Sigma^{\mathrm{ext}},\rightsquigarrow_\Sigma\bigr)
\]
be the functorial incidence package of Definition~\ref{def:functorial-incidence-package}. The \emph{binary incidence function}
\[
\chi_\Sigma:V_\Sigma^{\mathrm{ext}}\times V_\Sigma^{\mathrm{ext}}\to \{0,1\}
\]
is defined by
\begin{equation}\label{eq:def-binary-incidence-function}
\chi_\Sigma(u,v):=
\begin{cases}
1,&\text{if }u\rightsquigarrow_\Sigma v,\\[2mm]
0,&\text{if }u\not\rightsquigarrow_\Sigma v.
\end{cases}
\end{equation}
\end{definition}

\begin{remark}
\label{rem:binary-decategorification-minimal}
Definition~\ref{def:binary-incidence-function} is the minimal decategorification canonically available from the data constructed so far. It forgets the internal categorical structure of the functors and retains only the presence or absence of the coupling relation they determine. In particular, it does not require any additional choice of rank, dimension, Euler characteristic, or Grothendieck-group invariant.
\end{remark}

\begin{lemma}[well-definedness of the binary incidence function]
\label{lem:well-defined-binary-incidence-function}
The function \(\chi_\Sigma\) of Definition~\ref{def:binary-incidence-function} is well-defined.
\end{lemma}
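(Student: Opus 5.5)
The plan is to check the two requirements for a well-defined function: the domain is a fixed set, and each point of the domain receives exactly one value in \(\{0,1\}\).

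First, the domain. By Lemma~\ref{lem:finiteness-extended-vertex-set} and Proposition~\ref{prop:extended-vertex-package-canonical}, \(V_\Sigma^{\mathrm{ext}}\) is a canonically determined set with \(r+1\) elements. Hence \(V_\Sigma^{\mathrm{ext}}\times V_\Sigma^{\mathrm{ext}}\) is a fixed finite set with \((r+1)^2\) elements.

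Second, the relation. By Theorem~\ref{thm:functorial-coupling-theorem} and Theorem~\ref{thm:canonical-functorial-incidence-package}, \(\rightsquigarrow_\Sigma\) is a canonically determined subset of that product. Explicitly, by Lemma~\ref{lem:finiteness-basic-coupling} and Lemma~\ref{lem:explicit-form-mediated-relation}, it is
\[
\rightsquigarrow_\Sigma=\{(v_k,v_{\mathrm{bulk}}),(v_{\mathrm{bulk}},v_k)\}_{k=1}^r\ \cup\ V_\Sigma\times V_\Sigma .
\]
This means membership of any pair \((u,v)\) in \(\rightsquigarrow_\Sigma\) is a definite yes-or-no matter that needs no further choice.

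Third, the values. For each pair \((u,v)\), exactly one of \(u\rightsquigarrow_\Sigma v\) or \(u\not\rightsquigarrow_\Sigma v\) holds. So the two clauses in \eqref{eq:def-binary-incidence-function} are mutually exclusive and together cover every case. Therefore \(\chi_\Sigma(u,v)\) is a single, unambiguous element of \(\{0,1\}\). In other words, \(\chi_\Sigma\) is the characteristic function \(\mathbf 1_{\rightsquigarrow_\Sigma}\) of a subset.

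The only point needing care is that the relation must not depend on auxiliary choices, such as the particular functors chosen within their isomorphism classes or the labelling of the nodes. Two facts settle this. First, the defining conditions in Definitions~\ref{def:basic-functorial-coupling-relation} and \ref{def:mediated-coupling-relation} refer only to the existence of the attachment functors and their composites, not to their specific form. Second, the indexing of the sectors by \(\Sigma\) is canonical, as recorded in Remark~\ref{rem:partII-same-node-indexing} and in the proof of Lemma~\ref{lem:well-defined-basic-coupling}. I expect no real obstacle beyond recording this.
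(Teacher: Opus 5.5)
Your proof is correct and is essentially the paper's argument: since $\rightsquigarrow_\Sigma$ is a canonically defined subset of $V_\Sigma^{\mathrm{ext}}\times V_\Sigma^{\mathrm{ext}}$ (Theorem~\ref{thm:functorial-coupling-theorem}), exactly one of the two clauses holds for each ordered pair, so $\chi_\Sigma$ is its characteristic function. Your explicit listing of the relation and your remark on independence from auxiliary choices are harmless additions that the paper's proof does not need.
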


\begin{proof}
By Theorem~\ref{thm:functorial-coupling-theorem}, the relation
\[
\rightsquigarrow_\Sigma\subseteq V_\Sigma^{\mathrm{ext}}\times V_\Sigma^{\mathrm{ext}}
\]
is canonically defined. Therefore, for each ordered pair \((u,v)\in V_\Sigma^{\mathrm{ext}}\times V_\Sigma^{\mathrm{ext}}\), exactly one of the alternatives
\[
u\rightsquigarrow_\Sigma v
\qquad\text{or}\qquad
u\not\rightsquigarrow_\Sigma v
\]
holds. Hence \(\chi_\Sigma(u,v)\) is uniquely determined and the function is well-defined.
\end{proof}

\begin{lemma}[finiteness of the binary incidence function]
\label{lem:finiteness-binary-incidence-function}
The function \(\chi_\Sigma\) is supported on a finite set of ordered pairs. More precisely, if \(|\Sigma|=r\), then
\[
V_\Sigma^{\mathrm{ext}}\times V_\Sigma^{\mathrm{ext}}
\]
has cardinality \((r+1)^2\), and \(\chi_\Sigma\) takes the value \(1\) on finitely many of those pairs.
\end{lemma}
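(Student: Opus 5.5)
The plan is to reduce the statement to two counting facts: the cardinality of the domain of \(\chi_\Sigma\), and the cardinality of the relation \(\rightsquigarrow_\Sigma\). The support of \(\chi_\Sigma\), namely the set of pairs where it takes the value \(1\), is by Definition~\ref{def:binary-incidence-function} exactly the relation \(\rightsquigarrow_\Sigma\) viewed as a subset of \(V_\Sigma^{\mathrm{ext}}\times V_\Sigma^{\mathrm{ext}}\). So both claims follow once these two sets are counted.

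First, I would invoke Lemma~\ref{lem:finiteness-extended-vertex-set} to get \(|V_\Sigma^{\mathrm{ext}}|=r+1\). The Cartesian square \(V_\Sigma^{\mathrm{ext}}\times V_\Sigma^{\mathrm{ext}}\) then has \((r+1)^2\) elements. Lemma~\ref{lem:well-defined-binary-incidence-function} guarantees that \(\chi_\Sigma\) is a genuine function on this finite set. This already implies that \(\chi_\Sigma^{-1}(1)\) is finite.

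Second, to make the count explicit, I would use Proposition~\ref{prop:decomposition-incidence-package}, which writes \(\rightsquigarrow_\Sigma=\rightsquigarrow_{\mathrm{bulk}}\cup\rightsquigarrow_{\mathrm{med}}\). I would check that the union is disjoint. Every pair in \(\rightsquigarrow_{\mathrm{bulk}}\) involves \(v_{\mathrm{bulk}}\), whereas \(\rightsquigarrow_{\mathrm{med}}\subseteq V_\Sigma\times V_\Sigma\) contains no such pair (Lemmas~\ref{lem:localized-restriction-incidence} and \ref{lem:bulk-localized-restriction-incidence}). Then Lemma~\ref{lem:finiteness-basic-coupling} gives \(2r\) pairs and Lemma~\ref{lem:finiteness-mediated-relation} gives \(r^2\) pairs, so
\[
\bigl|\chi_\Sigma^{-1}(1)\bigr| = r^2+2r = (r+1)^2-1.
\]
The only pair on which \(\chi_\Sigma\) vanishes is therefore \((v_{\mathrm{bulk}},v_{\mathrm{bulk}})\). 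This sharper statement is worth recording, but it is not needed for the lemma as stated.

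No step is a real obstacle. The one point needing care is the disjointness of the bulk and mediated components, since the exact count depends on it. That disjointness is immediate from the disjoint-union construction \(V_\Sigma^{\mathrm{ext}}=V_\Sigma\sqcup\{v_{\mathrm{bulk}}\}\) in Definition~\ref{def:extended-vertex-set}. If one wants only finiteness, the bound \(|\chi_\Sigma^{-1}(1)|\le (r+1)^2\) suffices and avoids the issue entirely.
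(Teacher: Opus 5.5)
Your proof is correct and follows the paper's argument: the paper also gets $(r+1)^2$ from Lemma~\ref{lem:finiteness-extended-vertex-set} and deduces that $\chi_\Sigma^{-1}(1)$ is finite. The paper does this by citing the finiteness of $\rightsquigarrow_\Sigma$ from Theorem~\ref{thm:functorial-coupling-theorem}, whereas you simply note that it is a subset of a finite set; your sharper count $|\chi_\Sigma^{-1}(1)|=r^2+2r$, which excludes only $(v_{\mathrm{bulk}},v_{\mathrm{bulk}})$, is also correct and matches the framed block form \eqref{eq:general-framed-incidence-matrix}.
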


\begin{proof}
By Lemma~\ref{lem:finiteness-extended-vertex-set},
\[
|V_\Sigma^{\mathrm{ext}}|=r+1.
\]
Therefore the Cartesian product \(V_\Sigma^{\mathrm{ext}}\times V_\Sigma^{\mathrm{ext}}\) has cardinality \((r+1)^2\). Since \(\rightsquigarrow_\Sigma\) is a finite relation by Theorem~\ref{thm:functorial-coupling-theorem}, the set of pairs on which \(\chi_\Sigma\) takes the value \(1\) is finite.
\end{proof}

\subsection{The incidence matrix}

We now write the binary incidence function in matrix form.

\begin{definition}[binary incidence matrix]
\label{def:binary-incidence-matrix}
Choose the ordered basis
\begin{equation}\label{eq:def-ordered-extended-vertex-basis}
V_\Sigma^{\mathrm{ext}}:=\{v_1,\dots,v_r,v_{\mathrm{bulk}}\}.
\end{equation}
The \emph{binary incidence matrix} of the finite-node schober package is the matrix
\begin{equation}\label{eq:def-binary-incidence-matrix}
I_\Sigma:=\bigl(\chi_{\alpha\beta}\bigr)_{1\le \alpha,\beta\le r+1}
\in M_{r+1}(\{0,1\}),
\end{equation}
whose entries are defined by
\begin{equation}\label{eq:def-incidence-matrix-entry}
\chi_{\alpha\beta}:=\chi_\Sigma(v_\alpha,v_\beta),
\end{equation}
with the convention \(v_{r+1}:=v_{\mathrm{bulk}}\).
\end{definition}

\begin{remark}
\label{rem:indexing-convention-incidence}
The incidence matrix \(I_\Sigma\) depends on the chosen ordering of the extended vertex set, but only up to simultaneous permutation of rows and columns. The underlying incidence data are therefore independent of the ordering convention. In the present paper the canonical content is the function \(\chi_\Sigma\) and the associated relation \(\rightsquigarrow_\Sigma\); the matrix form is a convenient finite algebraic presentation.
\end{remark}

\begin{proposition}[support of the incidence matrix]
\label{prop:support-of-incidence-matrix}
The matrix \(I_\Sigma\) has the following canonical support decomposition:
\begin{enumerate}
\item the entries involving the bulk vertex \(v_{\mathrm{bulk}}\) encode the direct bulk/localized coupling relation \(\rightsquigarrow_{\mathrm{bulk}}\);
\item the entries indexed by pairs \((v_i,v_j)\in V_\Sigma\times V_\Sigma\) encode the mediated coupling relation \(\rightsquigarrow_{\mathrm{med}}\).
\end{enumerate}
\end{proposition}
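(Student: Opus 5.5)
The plan is to read the support decomposition directly off the definitions, partitioning the index set of \(I_\Sigma\) by whether the bulk vertex occurs. With the ordering \(v_{r+1}:=v_{\mathrm{bulk}}\) of Definition~\ref{def:binary-incidence-matrix}, I will split the \((r+1)^2\) index pairs \((\alpha,\beta)\) into two disjoint blocks. The first block \(B_{\mathrm{bulk}}\) consists of pairs with \(\alpha=r+1\) or \(\beta=r+1\), i.e. the last row and column. The second block \(B_{\mathrm{loc}}\) consists of pairs with \(1\le\alpha,\beta\le r\), i.e. the upper-left \(r\times r\) block. By Proposition~\ref{prop:extended-vertex-decomposition} the decomposition \(V_\Sigma^{\mathrm{ext}}=V_\Sigma\sqcup\{v_{\mathrm{bulk}}\}\) is disjoint. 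Hence \(V_\Sigma^{\mathrm{ext}}\times V_\Sigma^{\mathrm{ext}}\) is the disjoint union of \(V_\Sigma\times V_\Sigma\) and the set of pairs involving \(v_{\mathrm{bulk}}\), so these two blocks exhaust the matrix.

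For item (1), take \((\alpha,\beta)\in B_{\mathrm{bulk}}\). By \eqref{eq:def-incidence-matrix-entry} and Definition~\ref{def:binary-incidence-function}, \(\chi_{\alpha\beta}=1\) if and only if \(v_\alpha\rightsquigarrow_\Sigma v_\beta\). Lemma~\ref{lem:bulk-localized-restriction-incidence} then replaces \(\rightsquigarrow_\Sigma\) by \(\rightsquigarrow_{\mathrm{bulk}}\) on such pairs. By Lemma~\ref{lem:finiteness-basic-coupling}, the entries are explicitly \(\chi_{k,r+1}=\chi_{r+1,k}=1\) for \(1\le k\le r\). In addition, \(\chi_{r+1,r+1}=0\), because \((v_{\mathrm{bulk}},v_{\mathrm{bulk}})\) is not among the \(2r\) listed pairs. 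So this block is exactly the characteristic function of \(\rightsquigarrow_{\mathrm{bulk}}\).

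For item (2), take \((\alpha,\beta)\in B_{\mathrm{loc}}\). The same reasoning applies, now using Lemma~\ref{lem:localized-restriction-incidence} to replace \(\rightsquigarrow_\Sigma\) by \(\rightsquigarrow_{\mathrm{med}}\). Lemma~\ref{lem:explicit-form-mediated-relation} then gives \(\chi_{ij}=1\) for all \(1\le i,j\le r\). I will record the resulting block form
\[
I_\Sigma=\begin{pmatrix} J_r & \mathbf 1 \\ \mathbf 1^{T} & 0\end{pmatrix},
\]
where \(J_r\) is the all-ones \(r\times r\) matrix and \(\mathbf 1\) is the all-ones column vector.

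I will add a final remark on canonicity. Reordering \(V_\Sigma\) conjugates \(I_\Sigma\) by a permutation matrix fixing index \(r+1\). Such a conjugation preserves both blocks, so the decomposition does not depend on the ordering, in line with Remark~\ref{rem:indexing-convention-incidence}.

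The only point needing care is the disjointness of the two supports inside the union \(\rightsquigarrow_{\mathrm{bulk}}\cup\rightsquigarrow_{\mathrm{med}}\). I must make sure that no pair of \(\rightsquigarrow_{\mathrm{med}}\) involves \(v_{\mathrm{bulk}}\). This holds because the embedding \(V_\Sigma\times V_\Sigma\hookrightarrow V_\Sigma^{\mathrm{ext}}\times V_\Sigma^{\mathrm{ext}}\) of Definition~\ref{def:total-functorial-coupling-relation} avoids the bulk vertex. I expect this to be the main, though mild, obstacle, and I will handle it by citing the disjointness built into the two restriction lemmas above.
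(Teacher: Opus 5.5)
Your proposal is correct and follows essentially the paper's route. The paper's proof invokes Proposition~\ref{prop:decomposition-incidence-package}, which is assembled from the same two restriction lemmas you cite, and observes that the characteristic matrix of $\rightsquigarrow_{\mathrm{bulk}}\cup\rightsquigarrow_{\mathrm{med}}$ inherits the same support split; your explicit entries (all ones in the localized block and in the bulk row and column, zero in the bulk diagonal entry) and your reordering remark are correct, consistent with Lemma~\ref{lem:explicit-form-mediated-relation} and Section~\ref{sec:examples}, but not needed for the statement.
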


\begin{proof}
By Proposition~\ref{prop:decomposition-incidence-package},
\[
\rightsquigarrow_\Sigma
=
\rightsquigarrow_{\mathrm{bulk}}\cup \rightsquigarrow_{\mathrm{med}},
\]
where \(\rightsquigarrow_{\mathrm{bulk}}\) is supported on pairs involving \(v_{\mathrm{bulk}}\) and \(\rightsquigarrow_{\mathrm{med}}\) is supported on \(V_\Sigma\times V_\Sigma\). Since \(I_\Sigma\) is the characteristic matrix of \(\rightsquigarrow_\Sigma\), its support decomposes in the same way.
\end{proof}

\begin{remark}
\label{rem:binary-incidence-matrix-framing}
The matrix \(I_\Sigma\) should be viewed as a framed incidence matrix. The distinguished role of the bulk vertex separates the bulk/localized rows and columns from the purely localized block. In particular, the localized-localized block of \(I_\Sigma\) is the decategorified image of the mediated relation, while the bulk row and bulk column encode the direct attachment pattern.
\end{remark}

\subsection{Decategorified incidence package}

We now isolate the algebraic package extracted from the relation object.

\begin{definition}[decategorified incidence package]
\label{def:decategorified-incidence-package}
The \emph{decategorified incidence package} attached to the finite-node schober package is the pair
\begin{equation}\label{eq:def-decategorified-incidence-package}
\mathcal I_\Sigma:=\bigl(V_\Sigma^{\mathrm{ext}},I_\Sigma\bigr),
\end{equation}
where \(V_\Sigma^{\mathrm{ext}}\) is the extended vertex set and \(I_\Sigma\) is the binary incidence matrix of Definition~\ref{def:binary-incidence-matrix}.
\end{definition}

\begin{proposition}[canonical determination of the decategorified incidence package]
\label{prop:canonical-decategorified-incidence-package}
The decategorified incidence package
\[
\mathcal I_\Sigma:=\bigl(V_\Sigma^{\mathrm{ext}},I_\Sigma\bigr)
\]
is canonically determined by the finite-node schober package \(S_\Sigma\).
\end{proposition}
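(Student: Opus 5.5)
The plan is to verify separately that each of the two components of $\mathcal I_\Sigma=(V_\Sigma^{\mathrm{ext}},I_\Sigma)$ is determined by $S_\Sigma$ through canonical steps only. The one choice that enters, the ordering of the vertices, will then be shown to be harmless.

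For the vertex component I will invoke Proposition~\ref{prop:extended-vertex-package-canonical}. It says $V_\Sigma^{\mathrm{ext}}=V_\Sigma\sqcup\{v_{\mathrm{bulk}}\}$ is canonically determined by the node-indexed vertex set together with the bulk category $\mathcal C_{\mathrm{bulk}}$ of $S_\Sigma$. Since the node indexing of the localized sectors $\mathcal C_{p_k}$ is itself part of the schober datum (Remark~\ref{rem:partII-same-node-indexing}), this component is determined by $S_\Sigma$.

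For the matrix component I will proceed in three steps.
\begin{enumerate}
\item Theorem~\ref{thm:canonical-functorial-incidence-package} gives that the relation $\rightsquigarrow_\Sigma$ is canonically determined by $S_\Sigma$.
\item The binary incidence function $\chi_\Sigma$ of Definition~\ref{def:binary-incidence-function} is the characteristic function of this relation. Lemma~\ref{lem:well-defined-binary-incidence-function} shows it is well defined, and it involves no further choice. So $\chi_\Sigma$ is a function of $S_\Sigma$ alone.
\item $I_\Sigma$ is the array of values of $\chi_\Sigma$ with respect to the ordering $(v_1,\dots,v_r,v_{\mathrm{bulk}})$ of Definition~\ref{def:binary-incidence-matrix}.
\end{enumerate}

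The main point to address is this dependence on the ordering. I will argue that the ordering is itself canonical: the localized vertices carry the node labels $p_1,\dots,p_r$ fixed in the input package, and the bulk vertex is placed last by the convention $v_{r+1}:=v_{\mathrm{bulk}}$. If one changes the labeling of $\Sigma$ by a permutation $\sigma$, the matrix changes to $P_\sigma I_\Sigma P_\sigma^{-1}$, where $P_\sigma$ is the permutation matrix fixing the last index. This is the content of Remark~\ref{rem:indexing-convention-incidence}. Hence the package is canonical up to simultaneous row and column permutation, and strictly canonical once the node labeling is fixed.

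For extra concreteness I will record the explicit form of the matrix. Lemma~\ref{lem:finiteness-basic-coupling} and Lemma~\ref{lem:explicit-form-mediated-relation} give $I_\Sigma=J_{r+1}-E_{r+1,r+1}$: every entry equals $1$ except the bulk diagonal entry, which is $0$. This formula depends only on $r=|\Sigma|$, which makes the canonical determination transparent.
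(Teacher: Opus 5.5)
Your proposal is correct and follows essentially the same route as the paper. The paper also gets canonicity of $\rightsquigarrow_\Sigma$ (and of $V_\Sigma^{\mathrm{ext}}$) from Theorem~\ref{thm:canonical-functorial-incidence-package}, passes to the characteristic function $\chi_\Sigma$, and notes that the only choice in writing $I_\Sigma$ is the vertex ordering, which changes the matrix only by simultaneous row and column permutation. Your explicit formula $I_\Sigma=J_{r+1}-E_{r+1,r+1}$ is correct, since $\rightsquigarrow_{\mathrm{bulk}}$ is the full star and $\rightsquigarrow_{\mathrm{med}}=V_\Sigma\times V_\Sigma$; the paper's proof does not need it, but it does show the matrix is literally unchanged under relabelings of the nodes that keep $v_{\mathrm{bulk}}$ last.
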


\begin{proof}
By Theorem~\ref{thm:canonical-functorial-incidence-package}, the schober package \(S_\Sigma\) canonically determines the functorial incidence package
\[
\mathfrak{I}_\Sigma:=\bigl(V_\Sigma^{\mathrm{ext}},\rightsquigarrow_\Sigma\bigr).
\]
By Definition~\ref{def:binary-incidence-function}, the relation \(\rightsquigarrow_\Sigma\) canonically determines its characteristic function \(\chi_\Sigma\), and by Definition~\ref{def:binary-incidence-matrix}, this in turn canonically determines the matrix \(I_\Sigma\) once an ordering convention on \(V_\Sigma^{\mathrm{ext}}\) has been fixed. Since different orderings only permute rows and columns simultaneously, the underlying package \(\mathcal I_\Sigma\) is canonically determined by \(S_\Sigma\).
\end{proof}

\begin{theorem}[decategorified incidence theorem]
\label{thm:decategorified-incidence-theorem}
Let
\[
S_\Sigma:=\Bigl(\mathcal C_{\mathrm{bulk}},\{\mathcal C_{p_k}\}_{k=1}^r,\{\Phi_k,\Psi_k\}_{k=1}^r,Sh(S_\Sigma)\Bigr)
\]
be the finite-node schober package associated with a finite-node conifold degeneration. Then \(S_\Sigma\) determines a canonical decategorified incidence package
\[
\mathcal I_\Sigma:=\bigl(V_\Sigma^{\mathrm{ext}},I_\Sigma\bigr),
\]
where \(I_\Sigma\) is the \(\{0,1\}\)-valued incidence matrix recording the functorial coupling relation on the extended vertex set.
\end{theorem}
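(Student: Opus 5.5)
The plan is to assemble Theorem~\ref{thm:decategorified-incidence-theorem} directly from the chain of canonical constructions already established, in three steps. First, I would invoke Theorem~\ref{thm:canonical-functorial-incidence-package} to obtain from \(S_\Sigma\) the functorial incidence package \(\mathfrak{I}_\Sigma=(V_\Sigma^{\mathrm{ext}},\rightsquigarrow_\Sigma)\). Its vertex component comes from Proposition~\ref{prop:extended-vertex-package-canonical}, and its relation component comes from Theorem~\ref{thm:functorial-coupling-theorem}. Second, I would apply Lemma~\ref{lem:well-defined-binary-incidence-function} to pass to the characteristic function \(\chi_\Sigma\), and Lemma~\ref{lem:finiteness-binary-incidence-function} to see that it lives on the finite set \(V_\Sigma^{\mathrm{ext}}\times V_\Sigma^{\mathrm{ext}}\) of cardinality \((r+1)^2\). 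This makes \(I_\Sigma\) an honest element of \(M_{r+1}(\{0,1\})\), with \(\{0,1\}\) entries by construction. Third, Proposition~\ref{prop:canonical-decategorified-incidence-package} gives canonicity of the pair \(\mathcal I_\Sigma=(V_\Sigma^{\mathrm{ext}},I_\Sigma)\).

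To make the statement that \(I_\Sigma\) ``records the functorial coupling relation'' concrete, I would also write out the matrix explicitly. I would use Lemma~\ref{lem:finiteness-basic-coupling} and Lemma~\ref{lem:explicit-form-mediated-relation}, together with the support decomposition of Proposition~\ref{prop:support-of-incidence-matrix}. In the ordering \(v_1,\dots,v_r,v_{\mathrm{bulk}}\), this gives
\[
I_\Sigma=\begin{pmatrix} J_r & \mathbf 1 \\ \mathbf 1^{T} & 0\end{pmatrix},
\]
where \(J_r\) is the all-ones \(r\times r\) block and \(\mathbf 1\) is the all-ones column. The \((r+1,r+1)\) entry vanishes because neither \(\rightsquigarrow_{\mathrm{bulk}}\) nor \(\rightsquigarrow_{\mathrm{med}}\) contains \((v_{\mathrm{bulk}},v_{\mathrm{bulk}})\). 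Having the matrix explicitly shows that \(\chi_\Sigma\) is recovered from \(I_\Sigma\), since it is just its entries. Hence \(I_\Sigma\) and \(\rightsquigarrow_\Sigma\) determine each other, which is the precise sense of ``recording'' the relation.

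The only genuine subtlety is the ordering dependence of the matrix, and I would handle it as follows. Any two orderings of \(V_\Sigma^{\mathrm{ext}}\) differ by a permutation \(\sigma\) of \(\{1,\dots,r+1\}\), and the corresponding matrices are related by \(P_\sigma I_\Sigma P_\sigma^{-1}\). So \(\mathcal I_\Sigma\) is canonical as a matrix up to simultaneous row/column permutation, equivalently as the function \(\chi_\Sigma\) on \(V_\Sigma^{\mathrm{ext}}\times V_\Sigma^{\mathrm{ext}}\). The bulk vertex is distinguished, so one can go further: the natural convention of placing \(v_{\mathrm{bulk}}\) last reduces the residual ambiguity to permutations of the node labels. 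These leave the matrix displayed above invariant, because every block is constant. I would therefore state the theorem's canonicity in this form, and I expect this ordering bookkeeping to be the main point requiring care.
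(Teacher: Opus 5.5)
Your proposal is correct and follows essentially the paper's route. The paper proves the theorem as an immediate consequence of Proposition~\ref{prop:canonical-decategorified-incidence-package}, whose proof is exactly your chain: Theorem~\ref{thm:canonical-functorial-incidence-package}, then the characteristic function $\chi_\Sigma$, then the matrix $I_\Sigma$ determined up to simultaneous row/column permutation; your explicit framed block form of $I_\Sigma$ and the observation that relabeling nodes (with $v_{\mathrm{bulk}}$ last) leaves it unchanged agree with the paper's general pattern and are harmless extras rather than needed steps.
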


\begin{proof}
This is an immediate consequence of Proposition~\ref{prop:canonical-decategorified-incidence-package}.
\end{proof}

\begin{corollary}[binary multiplicities]
\label{cor:binary-multiplicities}
At the level of the present paper, the finite-node schober package determines honest incidence multiplicities in the binary sense: for each ordered pair \((u,v)\in V_\Sigma^{\mathrm{ext}}\times V_\Sigma^{\mathrm{ext}}\), the corresponding multiplicity is
\[
\chi_\Sigma(u,v)\in\{0,1\}.
\]
\end{corollary}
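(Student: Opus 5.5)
The statement to establish is Corollary~\ref{cor:binary-multiplicities}: every ordered pair in $V_\Sigma^{\mathrm{ext}}\times V_\Sigma^{\mathrm{ext}}$ receives a well-defined multiplicity $\chi_\Sigma(u,v)\in\{0,1\}$, and this multiplicity is determined by $S_\Sigma$. The plan is to read this directly off Theorem~\ref{thm:decategorified-incidence-theorem} together with the definitions of $\chi_\Sigma$ and $I_\Sigma$. No new construction is needed.

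\textbf{Step 1.} By Theorem~\ref{thm:decategorified-incidence-theorem}, $S_\Sigma$ canonically determines $\mathcal I_\Sigma=(V_\Sigma^{\mathrm{ext}},I_\Sigma)$. The entries of $I_\Sigma$ are $\chi_{\alpha\beta}=\chi_\Sigma(v_\alpha,v_\beta)$ by \eqref{eq:def-incidence-matrix-entry}. So each ordered pair $(u,v)$ is assigned the single number $\chi_\Sigma(u,v)$.

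\textbf{Step 2.} Lemma~\ref{lem:well-defined-binary-incidence-function} shows this number is uniquely defined: exactly one of $u\rightsquigarrow_\Sigma v$ or $u\not\rightsquigarrow_\Sigma v$ holds. Definition~\ref{def:binary-incidence-function} then shows the value lies in $\{0,1\}$.

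\textbf{Step 3.} We check that calling these numbers \emph{multiplicities} is independent of choices. Remark~\ref{rem:indexing-convention-incidence} and the proof of Proposition~\ref{prop:canonical-decategorified-incidence-package} show that reordering the vertices only permutes rows and columns simultaneously. Hence the value attached to a given pair $(u,v)$ does not depend on the ordering.

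For concreteness, one can also write the multiplicities out. Lemma~\ref{lem:finiteness-basic-coupling} and Lemma~\ref{lem:explicit-form-mediated-relation} give:
\begin{itemize}
\item $\chi_\Sigma(v_k,v_{\mathrm{bulk}})=\chi_\Sigma(v_{\mathrm{bulk}},v_k)=1$ for $1\le k\le r$;
\item $\chi_\Sigma(v_i,v_j)=1$ for all $1\le i,j\le r$;
\item $\chi_\Sigma(v_{\mathrm{bulk}},v_{\mathrm{bulk}})=0$, because neither component relation contains the pair $(v_{\mathrm{bulk}},v_{\mathrm{bulk}})$.
\end{itemize}

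\textbf{Main obstacle.} The main point needing care is not mathematical difficulty but the meaning of \emph{honest} multiplicities. I would state explicitly that these are multiplicities only in the binary, support-level sense of Remark~\ref{rem:binary-choice-section6}. They are not arrow counts derived from weighted invariants, so no additional invariant enters the argument.
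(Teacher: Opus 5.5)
Your proposal is correct and follows essentially the paper's route. The paper's proof is the one-line observation that the claim is exactly the content of Definition~\ref{def:binary-incidence-function}, and your Steps 1--3 spell out the supporting facts that citation relies on: canonical determination by Theorem~\ref{thm:decategorified-incidence-theorem}, well-definedness by Lemma~\ref{lem:well-defined-binary-incidence-function}, and independence of the vertex ordering. Your explicit list of values, including $\chi_\Sigma(v_{\mathrm{bulk}},v_{\mathrm{bulk}})=0$, agrees with Lemmas~\ref{lem:finiteness-basic-coupling} and \ref{lem:explicit-form-mediated-relation}, but the corollary does not need it.
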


\begin{proof}
This is exactly the content of Definition~\ref{def:binary-incidence-function}.
\end{proof}

\subsection{Remarks on stronger decategorifications}

The binary decategorification above is the only one canonically forced by the theorem package established so far. One could ask for stronger numerical decategorifications, for example:
\begin{enumerate}
\item integer-valued incidence numbers obtained from ranks on a Grothendieck group;
\item dimensions of induced maps on decategorified Hom-spaces;
\item Euler characteristics of suitable Hom-complexes;
\item other numerical invariants extracted from the composites \(\Psi_j\circ\Phi_i:\mathcal C_{p_i}\to\mathcal C_{p_j}\).
\end{enumerate}
Such refinements require additional structure and additional choices not yet fixed in the present paper. Accordingly, they are not imposed here.

\begin{remark}
\label{rem:why-binary-decategorification}
The choice of binary decategorification is not a weakness. It is the mathematically correct output at the present stage. The functorial incidence package already determines, without any further choice, which couplings are present and which are absent. That information is precisely what is needed for the first quiver-theoretic assembly carried out in the next section.
\end{remark}

\begin{remark}
\label{rem:binary-before-weighted}
The present paper therefore proceeds in two stages. First, it constructs the canonical binary incidence package \(\mathcal I_\Sigma\). Second, it uses this package together with the state-data package
\[
A_\Sigma=(V_\Sigma,E_\Sigma,c_\Sigma)
\]
and the functorial coupling datum
\[
\mathcal F_\Sigma:=\{(\Phi_k,\Psi_k)\}_{k=1}^r
\]
to assemble the quiver-theoretic package. Stronger weighted incidence formalisms, if desired, belong to later work.
\end{remark}

The next section combines the state-data package \(A_\Sigma\), the functorial coupling datum \(\mathcal F_\Sigma\), and the decategorified incidence package \(\mathcal I_\Sigma\) into the quiver-theoretic package attached to the finite-node degeneration.
\section{Quiver-theoretic package}\label{sec:quiver-theoretic-package}

The purpose of this section is to assemble the outputs of \cite{RahmanQuiverDataI} and the present work into a single finite algebraic package. The input data are the state-data package
\[
A_\Sigma:=\bigl(V_\Sigma,E_\Sigma,c_\Sigma\bigr)
\]
from \cite{RahmanQuiverDataI}, the functorial coupling datum
\[
\mathcal F_\Sigma:=\{(\Phi_k,\Psi_k)\}_{k=1}^r,
\]
and the decategorified incidence package
\[
\mathcal I_\Sigma:=\bigl(V_\Sigma^{\mathrm{ext}},I_\Sigma\bigr)
\]
of Section~\ref{sec:decategorified-incidence-data}. The output of the present section is the first fully assembled finite quiver-theoretic package attached to the degeneration.

The terminology is chosen deliberately. At the present stage of the theory, the data constructed so far determine a finite vertex package, a finite coupling space, a coefficient vector, a finite functorial coupling datum, and a finite binary incidence matrix. These data are sufficient to define a canonical quiver-theoretic package. They do not yet determine a full weighted quiver in the classical representation-theoretic sense, since no stronger numerical arrow multiplicity law has yet been imposed beyond the binary incidence pattern. Thus the correct output of this paper is a quiver-theoretic package rather than a fully weighted quiver. This is the mathematically precise formulation of the interaction layer extracted from the finite-node geometry.

\subsection{Definition of the package}

We begin by isolating the algebraic output.

\begin{definition}[finite-node quiver-theoretic package]
\label{def:finite-node-quiver-theoretic-package}
Let \(\pi:X\to\Delta\) be a finite-node conifold degeneration with finite algebraic state-data package
\[
A_\Sigma:=\bigl(V_\Sigma,E_\Sigma,c_\Sigma\bigr),
\]
functorial coupling datum
\[
\mathcal F_\Sigma:=\{(\Phi_k,\Psi_k)\}_{k=1}^r,
\]
and decategorified incidence package
\[
\mathcal I_\Sigma:=\bigl(V_\Sigma^{\mathrm{ext}},I_\Sigma\bigr).
\]
The \emph{finite-node quiver-theoretic package} attached to the degeneration is the tuple
\begin{equation}\label{eq:def-quiver-theoretic-package}
\mathfrak Q_\Sigma:=\bigl(V_\Sigma,E_\Sigma,c_\Sigma,\mathcal F_\Sigma,I_\Sigma\bigr).
\end{equation}
\end{definition}

\begin{remark}
\label{rem:quiver-theoretic-package-components}
The package \(\mathfrak Q_\Sigma\) contains five layers of information:
\begin{enumerate}
\item the finite localized vertex set
\[
V_\Sigma:=\{v_1,\dots,v_r\};
\]
\item the nodewise coupling space
\[
E_\Sigma:=\Ext^1_{\Perv(X_0;\Q)}(Q_\Sigma,IC_{X_0})
\cong \bigoplus_{k=1}^r \Q e_k;
\]
\item the coefficient vector
\[
c_\Sigma:=(c_1,\dots,c_r)\in\Q^r;
\]
\item the functorial coupling datum
\[
\mathcal F_\Sigma:=\{(\Phi_k,\Psi_k)\}_{k=1}^r;
\]
\item the binary incidence matrix
\[
I_\Sigma\in M_{r+1}(\{0,1\}).
\]
\end{enumerate}
The package therefore combines the state layer of \cite{RahmanQuiverDataI} with the interaction/incidence layer extracted in the present paper.
\end{remark}

\begin{remark}
\label{rem:bulk-vertex-in-I-not-in-V}
The bulk vertex \(v_{\mathrm{bulk}}\) belongs to the extended vertex set
\[
V_\Sigma^{\mathrm{ext}}:=V_\Sigma\sqcup\{v_{\mathrm{bulk}}\}
\]
and is therefore present implicitly through the incidence matrix \(I_\Sigma\). It is not included separately as a component of \(\mathfrak Q_\Sigma\), because its role is already encoded in the decategorified incidence package
\[
\mathcal I_\Sigma:=\bigl(V_\Sigma^{\mathrm{ext}},I_\Sigma\bigr).
\]
\end{remark}

\subsection{Basic properties}

We now record the structural properties of \(\mathfrak Q_\Sigma\).

\begin{proposition}[finiteness of the quiver-theoretic package]
\label{prop:finiteness-quiver-theoretic-package}
The package
\[
\mathfrak Q_\Sigma=\bigl(V_\Sigma,E_\Sigma,c_\Sigma,\mathcal F_\Sigma,I_\Sigma\bigr)
\]
is finite in the following sense:
\begin{enumerate}
\item \(|V_\Sigma|=r<\infty\);
\item \(\dim_\Q E_\Sigma=r\);
\item \(c_\Sigma\in\Q^r\);
\item \(\mathcal F_\Sigma\) consists of exactly \(r\) ordered pairs of functors
\[
(\Phi_k,\Psi_k),
\qquad
1\le k\le r;
\]
\item \(I_\Sigma\) is a finite \((r+1)\times (r+1)\) matrix with entries in \(\{0,1\}\).
\end{enumerate}
\end{proposition}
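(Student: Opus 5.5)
The plan is to verify the five finiteness clauses one at a time. Each reduces to a result already recorded either in Section~\ref{sec:finite-node-input} or in the preceding sections, so no new construction is needed. The proof is essentially bookkeeping, and the only care required is to cite the correct input for each component of \(\mathfrak Q_\Sigma\).

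For clause (1), I will use Definition~\eqref{eq:partII-def-VSigma}. It gives \(V_\Sigma=\{v_1,\dots,v_r\}\), indexed bijectively by the node set \(\Sigma=\{p_1,\dots,p_r\}\), and \(r=|\Sigma|<\infty\) because the degeneration is finite-node. For clause (2), I will invoke the decomposition \eqref{eq:partII-ESigma-decomposition}, \(E_\Sigma\cong\bigoplus_{k=1}^r\Q e_k\), proved in \cite{RahmanQuiverDataI}. Since the lines \(\Q e_k\) are one-dimensional and indexed by the \(r\) nodes, this gives \(\dim_\Q E_\Sigma=r\). Clause (3) is simply the definition \eqref{eq:partII-def-cSigma} of \(c_\Sigma\), read off from the expansion \eqref{eq:partII-cSigma-expansion} in the basis \(\{e_k\}\).

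For clause (4), I will combine Definition~\ref{def:partII-functorial-coupling-datum} with the counting used in Lemma~\ref{lem:finiteness-basic-coupling}. There, for each node \(p_k\), the schober package contains exactly one pair \((\Phi_k,\Psi_k)\), by \cite[Definition~3.4 and Theorem~4.4]{RahmanMultiNodeSchoberPaper}, and so \(\mathcal F_\Sigma\) is indexed by \(\{1,\dots,r\}\). This is the one step that is not purely formal. One must make sure the pairs are counted by their index \(k\) rather than as a set of functors, since distinct nodes could a priori give abstractly equivalent functors. I will handle this by treating \(\mathcal F_\Sigma\) as the indexed family \(k\mapsto(\Phi_k,\Psi_k)\), as the notation \(\{(\Phi_k,\Psi_k)\}_{k=1}^r\) indicates, so that it has exactly \(r\) members.

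Finally, for clause (5), Definition~\ref{def:binary-incidence-matrix} places \(I_\Sigma\) in \(M_{r+1}(\{0,1\})\). Its size comes from Lemma~\ref{lem:finiteness-extended-vertex-set}, which gives \(|V_\Sigma^{\mathrm{ext}}|=r+1\), and its entries lie in \(\{0,1\}\) by Definition~\ref{def:binary-incidence-function}. Lemma~\ref{lem:finiteness-binary-incidence-function} confirms this is a finite matrix. Together, the five clauses give the proposition.
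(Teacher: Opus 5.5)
Your proposal is correct and follows the paper's proof: each of the five clauses is read off from the corresponding definition or recalled input. These are the definition of \(V_\Sigma\), the decomposition \(E_\Sigma\cong\bigoplus_{k=1}^r\Q e_k\), the definition of \(c_\Sigma\), the one-pair-per-node structure of \(\mathcal F_\Sigma\), and \(|V_\Sigma^{\mathrm{ext}}|=r+1\) from Lemma~\ref{lem:finiteness-extended-vertex-set}. Your extra remark that \(\mathcal F_\Sigma\) should be counted as an indexed family \(k\mapsto(\Phi_k,\Psi_k)\) rather than as a set of functors is a harmless sharpening that the paper leaves implicit.
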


\begin{proof}
Statement (1) follows from the definition
\[
V_\Sigma:=\{v_1,\dots,v_r\}.
\]
Statement (2) follows from the decomposition
\[
E_\Sigma\cong \bigoplus_{k=1}^r \Q e_k
\]
established in \cite{RahmanQuiverDataI,RahmanMixedHodgeModules}. Statement (3) follows from the definition
\[
c_\Sigma:=(c_1,\dots,c_r)\in\Q^r.
\]
Statement (4) follows from Definition~\ref{def:partII-functorial-coupling-datum}, since the finite-node schober package contains one pair of attachment functors for each node \(p_k\in\Sigma\). Statement (5) follows from Definition~\ref{def:binary-incidence-matrix}, since the matrix is indexed by the extended vertex set \(V_\Sigma^{\mathrm{ext}}\), which has cardinality \(r+1\) by Lemma~\ref{lem:finiteness-extended-vertex-set}.
\end{proof}

\begin{proposition}[state-data embedding]
\label{prop:state-data-embedding}
The algebraic state-data package
\[
A_\Sigma:=\bigl(V_\Sigma,E_\Sigma,c_\Sigma\bigr)
\]
embeds canonically into the quiver-theoretic package \(\mathfrak Q_\Sigma\) as its first three components.
\end{proposition}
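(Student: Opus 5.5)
The plan is to make ``embeds canonically as its first three components'' precise and then observe that it holds essentially by construction. I would define the embedding as the assignment
\[
\iota_\Sigma:\; A_\Sigma=\bigl(V_\Sigma,E_\Sigma,c_\Sigma\bigr)\;\longmapsto\;\mathfrak Q_\Sigma=\bigl(V_\Sigma,E_\Sigma,c_\Sigma,\mathcal F_\Sigma,I_\Sigma\bigr).
\]
Its companion is the truncation map $\mathrm{pr}_{\le 3}:\mathfrak Q_\Sigma\mapsto (V_\Sigma,E_\Sigma,c_\Sigma)$, which forgets $\mathcal F_\Sigma$ and $I_\Sigma$. The claim then reduces to two verifications: $\mathrm{pr}_{\le 3}\circ\iota_\Sigma=\mathrm{id}$ on the state-data layer, and no choice enters the identification of the first three slots.

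The first step is to quote Definition~\ref{def:finite-node-quiver-theoretic-package}. It shows that the first three entries of $\mathfrak Q_\Sigma$ are literally the components $V_\Sigma$, $E_\Sigma$, $c_\Sigma$ of the state-data package of \cite{RahmanQuiverDataI}, recalled in \eqref{eq:partII-def-VSigma}, \eqref{eq:partII-def-ESigma}, and \eqref{eq:partII-def-cSigma}. Hence truncation recovers $A_\Sigma$ exactly, which gives injectivity and the retraction property.

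The second step is canonicity. Here I would point out that $V_\Sigma$ is indexed by the node set $\Sigma$. Next, $E_\Sigma$ is an intrinsic $\Ext^1$-group together with its nodewise decomposition \eqref{eq:partII-ESigma-decomposition}. Finally, $c_\Sigma$ is read off from the class $[\mathcal P]_{\mathrm{perv}}$ via \eqref{eq:partII-cSigma-expansion}. None of these three depends on the schober package $S_\Sigma$ or on the ordering convention of Definition~\ref{def:binary-incidence-matrix}. The ordering affects only $I_\Sigma$, and only up to simultaneous permutation (Remark~\ref{rem:indexing-convention-incidence}).

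The third step is compatibility. The added components $\mathcal F_\Sigma$ and $I_\Sigma$ must not alter the state layer. Proposition~\ref{prop:support-of-incidence-matrix} and Remark~\ref{rem:bulk-vertex-in-I-not-in-V} show that the localized rows and columns of $I_\Sigma$ are indexed by the same $V_\Sigma$ as the first component. Likewise, $\mathcal F_\Sigma$ is indexed by the same nodes, by Remark~\ref{rem:partII-same-node-indexing}. So the embedding respects the common node indexing.

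The main obstacle is interpretive rather than computational. ``Embeds canonically'' has to be given a precise meaning. I would fix it as the existence of the section $\iota_\Sigma$ and retraction $\mathrm{pr}_{\le 3}$ described above, natural with respect to relabellings of $\Sigma$. The one point needing care is checking that permuting the nodes acts compatibly on $V_\Sigma$, on the basis $\{e_k\}$ of $E_\Sigma$, and on $c_\Sigma$. I expect this to follow directly from the nodewise nature of \eqref{eq:partII-ESigma-decomposition} and \eqref{eq:partII-cSigma-expansion}.
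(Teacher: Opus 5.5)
Your proposal is correct, and its decisive step is exactly the paper's proof: by Definition~\ref{def:finite-node-quiver-theoretic-package}, the first three slots of $\mathfrak Q_\Sigma$ are literally $V_\Sigma$, $E_\Sigma$, $c_\Sigma$, so the statement holds by construction. Your further canonicity, compatibility, and relabelling-naturality checks, together with the section/retraction framing, go beyond what the paper uses (it simply inherits the canonicity of $A_\Sigma$ from the earlier work), but they are harmless elaborations rather than needed steps.
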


\begin{proof}
This is immediate from Definition~\ref{def:finite-node-quiver-theoretic-package}. The first three components of \(\mathfrak Q_\Sigma\) are exactly the components of \(A_\Sigma\).
\end{proof}

\begin{proposition}[interaction-data embedding]
\label{prop:interaction-data-embedding}
The interaction layer extracted in the present work embeds canonically into the quiver-theoretic package \(\mathfrak Q_\Sigma\) through the last two components
\[
\mathcal F_\Sigma
\qquad\text{and}\qquad
I_\Sigma.
\]
\end{proposition}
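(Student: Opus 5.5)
The plan mirrors the proof of Proposition~\ref{prop:state-data-embedding}, with one extra point: the interaction layer consists of \emph{two} objects of different nature, and the embedding must be canonical for each of them. I would first fix precisely what "the interaction layer extracted in the present work" means. It is the pair consisting of the functorial coupling datum $\mathcal F_\Sigma=\{(\Phi_k,\Psi_k)\}_{k=1}^r$ of Definition~\ref{def:partII-functorial-coupling-datum} and the decategorified incidence package $\mathcal I_\Sigma=(V_\Sigma^{\mathrm{ext}},I_\Sigma)$ of Definition~\ref{def:decategorified-incidence-package}. From that, the functorial incidence package $\mathfrak I_\Sigma$ is recovered through its characteristic function $\chi_\Sigma$.

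\emph{Step 1.} By Definition~\ref{def:finite-node-quiver-theoretic-package}, the fourth and fifth components of $\mathfrak Q_\Sigma$ are literally $\mathcal F_\Sigma$ and $I_\Sigma$. Hence the assignment sending the interaction data to these slots is the identity on each component, and no choice is made in the functorial part.

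\emph{Step 2.} I would check that nothing is lost by recording only $I_\Sigma$ rather than the whole pair $(V_\Sigma^{\mathrm{ext}},I_\Sigma)$. Since $V_\Sigma$ is the first component of $\mathfrak Q_\Sigma$, the set $V_\Sigma^{\mathrm{ext}}=V_\Sigma\sqcup\{v_{\mathrm{bulk}}\}$ is recovered from $\mathfrak Q_\Sigma$, with the bulk vertex indexed by the $(r+1)$-st row and column (Remark~\ref{rem:bulk-vertex-in-I-not-in-V}). The relation $\rightsquigarrow_\Sigma$ is then recovered as the support of $I_\Sigma$ via Definition~\ref{def:binary-incidence-function}. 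So the map from interaction data to the last two components of $\mathfrak Q_\Sigma$ is injective: it is an embedding and not merely a projection.

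\emph{Step 3.} Canonicity follows from Proposition~\ref{prop:canonical-decategorified-incidence-package} and Theorem~\ref{thm:canonical-functorial-incidence-package}. Both $\mathcal F_\Sigma$ and $I_\Sigma$ are determined by $S_\Sigma$. The only potential non-canonical ingredient is the ordering convention of Definition~\ref{def:binary-incidence-matrix}.

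The main obstacle is this ordering dependence of $I_\Sigma$. I would handle it as in Remark~\ref{rem:indexing-convention-incidence}. The ordering of $V_\Sigma$ is already fixed by the node indexing of $\Sigma$ (Remark~\ref{rem:partII-same-node-indexing}), and $v_{\mathrm{bulk}}$ is placed last. Any relabelling of the nodes acts simultaneously on $V_\Sigma$, $\mathcal F_\Sigma$, and $I_\Sigma$, the last by simultaneous row-and-column permutation. The embedding is therefore equivariant, and hence canonical up to the same relabelling that already affects the state-data embedding of Proposition~\ref{prop:state-data-embedding}.
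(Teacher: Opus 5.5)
Your Step~1 is exactly the paper's proof: the claim is immediate from Definition~\ref{def:finite-node-quiver-theoretic-package}, since $\mathcal F_\Sigma$ and $I_\Sigma$ are literally the fourth and fifth entries of $\mathfrak Q_\Sigma$, and your Steps~2--3 are correct extra verification that the paper does not carry out. The ordering issue you flag is actually vacuous here: because $\rightsquigarrow_{\mathrm{med}}=V_\Sigma\times V_\Sigma$, the matrix $I_\Sigma$ is all ones except for a $0$ in the bulk--bulk entry, so it is unchanged by any permutation of the node indices that keeps $v_{\mathrm{bulk}}$ last.
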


\begin{proof}
Again, this is immediate from Definition~\ref{def:finite-node-quiver-theoretic-package}. The functorial coupling datum \(\mathcal F_\Sigma\) and the binary incidence matrix \(I_\Sigma\) were defined in Sections~\ref{sec:finite-node-input} and \ref{sec:decategorified-incidence-data}, respectively, and appear explicitly as the last two entries of \(\mathfrak Q_\Sigma\).
\end{proof}

\begin{remark}
\label{rem:package-as-state-plus-interaction}
The package \(\mathfrak Q_\Sigma\) should therefore be read as the direct sum of two layers:
\[
\mathfrak Q_\Sigma
=
\underbrace{\bigl(V_\Sigma,E_\Sigma,c_\Sigma\bigr)}_{\text{state layer}}
\;\cup\;
\underbrace{\bigl(\mathcal F_\Sigma,I_\Sigma\bigr)}_{\text{interaction layer}}.
\]
This decomposition is conceptual rather than categorical, but it is the correct way to view the role of the present paper in the sequence. \cite{RahmanQuiverDataI} extracted the state layer; the present paper extracts the interaction layer and assembles both into \(\mathfrak Q_\Sigma\).
\end{remark}

\subsection{Canonical determination by the finite-node architecture}

We now show that the package \(\mathfrak Q_\Sigma\) is canonically determined by the finite-node architecture.

\begin{theorem}[canonical quiver-theoretic package]
\label{thm:canonical-quiver-theoretic-package}
Let \(\pi:X\to\Delta\) be a finite-node conifold degeneration. Then the corrected finite-node perverse extension, its mixed-Hodge-module refinement, and its finite-node schober realization canonically determine the finite quiver-theoretic package
\[
\mathfrak Q_\Sigma:=\bigl(V_\Sigma,E_\Sigma,c_\Sigma,\mathcal F_\Sigma,I_\Sigma\bigr).
\]
\end{theorem}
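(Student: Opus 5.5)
The plan is to verify canonical determination componentwise, tracing each of the five entries of $\mathfrak Q_\Sigma$ back to one of the three input structures: the corrected perverse extension, its mixed-Hodge-module refinement, and the schober realization. No new construction is needed. The proof consists of assembling results already established, either in \cite{RahmanQuiverDataI} or in the earlier sections of the present paper, and checking that the identifications used in each step are the canonical ones, so that no choices enter.

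First, the state layer. The corrected extension $0\to IC_{X_0}\to\mathcal P\to Q_\Sigma\to 0$ fixes the node set $\Sigma$, and hence $V_\Sigma$, through the summands of $Q_\Sigma$. It also fixes $E_\Sigma=\Ext^1_{\Perv(X_0;\Q)}(Q_\Sigma,IC_{X_0})$. The nodewise splitting \eqref{eq:partII-ESigma-decomposition} of \cite{RahmanQuiverDataI,RahmanMixedHodgeModules} comes from the direct sum decomposition of $Q_\Sigma$, so it is canonical. The class $[\mathcal P]_{\mathrm{perv}}$ then yields $c_\Sigma$ via \eqref{eq:partII-cSigma-expansion}. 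The mixed-Hodge-module refinement enters only as a consistency check. Its localized quotient $Q_\Sigma^H$ has the same indexing by Remark~\ref{rem:partII-same-node-indexing}, and $\rat$ sends the Hodge extension to $\mathcal P$. So the Hodge side determines the same $(V_\Sigma,E_\Sigma,c_\Sigma)$ and imposes no competing choice.

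Second, the interaction layer. The schober realization $S_\Sigma$ supplies $\mathcal F_\Sigma=\{(\Phi_k,\Psi_k)\}$ directly by Definition~\ref{def:partII-functorial-coupling-datum}. It also supplies $I_\Sigma$, up to simultaneous permutation of rows and columns, by Theorem~\ref{thm:decategorified-incidence-theorem} and Proposition~\ref{prop:canonical-decategorified-incidence-package}. To tie this layer to the first, I will use the shadow identification $Sh(S_\Sigma)\cong\mathcal P$ from \eqref{eq:partII-shadow-is-P}. Together with the common indexing of Remark~\ref{rem:partII-same-node-indexing}, this identifies the node index $k$ of $\mathcal C_{p_k}$ with the index of $e_k$ and of $v_k$. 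The ordering used for $I_\Sigma$ is then the one induced from $V_\Sigma$, with $v_{\mathrm{bulk}}$ placed last as in Definition~\ref{def:binary-incidence-matrix}.

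The main obstacle is exactly this compatibility of indexing between the schober side and the perverse side. Each component is canonical on its own, but the tuple $\mathfrak Q_\Sigma$ is canonical only if the labels $k$ agree across all layers. If the labels could be permuted independently on one side, $c_\Sigma$ and the rows of $I_\Sigma$ would be relabelled separately and the tuple would not be well defined. I will handle this by appealing to \cite[Theorem~6.1]{RahmanMultiNodeSchoberPaper}: the localized sectors are attached to the points $p_k$ themselves, and the shadow isomorphism carries the $k$-th localized sector to the summand $i_{k*}\Q_{\{p_k\}}$. Hence every component is indexed by $\Sigma$ itself, not by an auxiliary ordering. Any reordering of $\Sigma$ acts simultaneously on $V_\Sigma$, on the basis $\{e_k\}$, on $c_\Sigma$, on $\mathcal F_\Sigma$, and by conjugation on $I_\Sigma$. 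This shows $\mathfrak Q_\Sigma$ is canonical up to relabelling of $\Sigma$, which completes the argument.
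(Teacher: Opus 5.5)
Your proposal is correct and follows essentially the same componentwise route as the paper, which cites \cite{RahmanQuiverDataI} for $(V_\Sigma,E_\Sigma,c_\Sigma)$, Definition~\ref{def:partII-functorial-coupling-datum} for $\mathcal F_\Sigma$, and Theorem~\ref{thm:decategorified-incidence-theorem} for $I_\Sigma$. Your extra check that the node labels agree across layers via $Sh(S_\Sigma)\cong\mathcal P$ is not part of the paper's proof but is consistent with Remark~\ref{rem:partII-same-node-indexing}; note, however, that the direct-sum decomposition of $Q_\Sigma$ only makes the lines $\Q e_k$ canonical, so the normalization of each $e_k$, and hence of $c_\Sigma$, must come from the distinguished local corrected extensions of \cite{RahmanQuiverDataI} rather than from the splitting alone.
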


\begin{proof}
By \cite{RahmanQuiverDataI} and the earlier papers, the finite-node corrected extension package determines the algebraic state-data package
\[
A_\Sigma:=\bigl(V_\Sigma,E_\Sigma,c_\Sigma\bigr).
\]
By Definition~\ref{def:partII-functorial-coupling-datum}, the finite-node schober package determines the functorial coupling datum
\[
\mathcal F_\Sigma:=\{(\Phi_k,\Psi_k)\}_{k=1}^r.
\]
By Theorem~\ref{thm:decategorified-incidence-theorem}, the same schober package determines the decategorified incidence package
\[
\mathcal I_\Sigma:=\bigl(V_\Sigma^{\mathrm{ext}},I_\Sigma\bigr),
\]
and hence the matrix \(I_\Sigma\). Therefore all five components of
\[
\mathfrak Q_\Sigma:=\bigl(V_\Sigma,E_\Sigma,c_\Sigma,\mathcal F_\Sigma,I_\Sigma\bigr)
\]
are canonically determined by the finite-node architecture. This proves the claim.
\end{proof}

\begin{corollary}[first assembled interaction package]
\label{cor:first-assembled-interaction-package}
The package \(\mathfrak Q_\Sigma\) is the first fully assembled finite algebraic package attached to the finite-node degeneration that contains both the state variables of \cite{RahmanQuiverDataI} and the interaction/incidence data extracted here.
\end{corollary}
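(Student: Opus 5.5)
The corollary has two parts. First, $\mathfrak Q_\Sigma$ is a finite algebraic package containing both the state variables $A_\Sigma$ and the interaction/incidence data. Second, it is the \emph{first} package attached to the degeneration with this property. I would prove the two parts separately, each from results already recorded above.

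For the containment part, I would use Theorem~\ref{thm:canonical-quiver-theoretic-package}. It says that $\mathfrak Q_\Sigma$ is canonically determined by the finite-node architecture, so it is attached to the degeneration. Proposition~\ref{prop:finiteness-quiver-theoretic-package} then gives finiteness: each of the five components is a finite set, a finite-dimensional space, a vector in $\Q^r$, a family of $r$ pairs of functors, or an $(r+1)\times(r+1)$ binary matrix. Proposition~\ref{prop:state-data-embedding} shows that $A_\Sigma=(V_\Sigma,E_\Sigma,c_\Sigma)$ sits inside $\mathfrak Q_\Sigma$ as its first three entries. Proposition~\ref{prop:interaction-data-embedding} shows that $(\mathcal F_\Sigma,I_\Sigma)$ sits inside as its last two entries. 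The incidence package $\mathcal I_\Sigma=(V_\Sigma^{\mathrm{ext}},I_\Sigma)$ is also recovered. By Remark~\ref{rem:bulk-vertex-in-I-not-in-V}, $V_\Sigma^{\mathrm{ext}}=V_\Sigma\sqcup\{v_{\mathrm{bulk}}\}$ is determined by $V_\Sigma$ together with the size $r+1$ of $I_\Sigma$ and the ordering convention of Definition~\ref{def:binary-incidence-matrix}. So the whole incidence layer is encoded in $\mathfrak Q_\Sigma$.

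For the ``first'' part, I would read the claim as a statement about the order of construction in the paper and check every earlier package in turn:
\begin{enumerate}
\item $A_\Sigma$ contains no functorial or incidence data.
\item $S_\Sigma$ and $\mathcal F_\Sigma$ are categorical rather than finite algebraic, and do not contain $E_\Sigma$ or $c_\Sigma$.
\item $(V_\Sigma^{\mathrm{ext}},v_{\mathrm{bulk}})$ carries only vertex data.
\item $\mathfrak I_\Sigma$ and $\mathcal I_\Sigma$ contain incidence data but neither the coupling space $E_\Sigma$ nor the coefficient vector $c_\Sigma$.
\end{enumerate}
Hence $\mathfrak Q_\Sigma$, introduced in Definition~\ref{def:finite-node-quiver-theoretic-package}, is the first package that contains both layers.

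The main obstacle is that ``first'' has no formal content beyond this ordering. The honest plan is to state explicitly that minimality is meant relative to the packages built in this paper and in \cite{RahmanQuiverDataI}, and then to rely on the component-by-component inspection above. Everything else is a direct assembly of the cited propositions.
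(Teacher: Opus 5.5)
Your proposal is correct and follows the paper's route: the paper's proof just says the corollary is immediate from Theorem~\ref{thm:canonical-quiver-theoretic-package} and Definition~\ref{def:finite-node-quiver-theoretic-package}, and you unpack this via the embedding propositions, adding the explicit ``first'' check by construction order that the paper leaves implicit. One small fix: in your item (2), exclude $S_\Sigma$ and $\mathcal F_\Sigma$ because they lack $E_\Sigma$ and $c_\Sigma$, not because they are ``categorical'', since $\mathfrak Q_\Sigma$ itself contains $\mathcal F_\Sigma$.
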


\begin{proof}
This is immediate from Theorem~\ref{thm:canonical-quiver-theoretic-package} and Definition~\ref{def:finite-node-quiver-theoretic-package}.
\end{proof}

\subsection{What is proved and what is not}

We now state explicitly the scope of the present construction.

\begin{remark}
\label{rem:what-is-proved}
Theorem~\ref{thm:canonical-quiver-theoretic-package} proves that the finite-node degeneration canonically determines the package
\[
\mathfrak Q_\Sigma:=\bigl(V_\Sigma,E_\Sigma,c_\Sigma,\mathcal F_\Sigma,I_\Sigma\bigr).
\]
This package contains:
\begin{enumerate}
\item the finite localized vertices;
\item the finite nodewise coupling space;
\item the coefficient vector of the corrected global class;
\item the functorial bulk/localized coupling datum;
\item the binary incidence matrix recording the support of the coupling relation.
\end{enumerate}
Thus the paper proves the existence of a canonical finite quiver-theoretic interaction package.
\end{remark}

\begin{remark}
\label{rem:what-is-not-proved}
The present paper does \emph{not} prove the existence of a fully weighted quiver
\[
Q=(Q_0,Q_1,s,t)
\]
in the classical representation-theoretic sense. More precisely, it does not yet impose:
\begin{enumerate}
\item a numerical arrow multiplicity law stronger than the binary incidence matrix \(I_\Sigma\);
\item a skew-symmetric or Euler-type pairing extracted from the categorical data;
\item a stability structure;
\item BPS indices;
\item a wall-crossing formalism.
\end{enumerate}
Those belong to later stages of the theory.
\end{remark}

\begin{remark}
\label{rem:why-package-not-full-quiver}
The use of the term \emph{quiver-theoretic package} is therefore precise. The data constructed in the present paper have exactly the form needed for the next stages of the theory: they provide a finite vertex set, an interaction-support matrix, a coupling space, and a coefficient package. This is enough to support the later assembly of stability and BPS data, but it does not yet claim more than has been proved.
\end{remark}

\subsection{Relation to later papers}

The package \(\mathfrak Q_\Sigma\) is the output of this paper and the input to later work. In the future, \(\mathfrak Q_\Sigma\) will be equipped with stability data and used to define BPS sectors, BPS indices, and wall-crossing structure. In later applications papers, those dynamical outputs will be used to study halo structures, Fock-space organization, recursive laws, and consistency relations. The role of the present section is therefore to complete the interaction layer and to isolate the finite quiver-theoretic object that those later developments will consume.

The next section proves that the package \(\mathfrak Q_\Sigma\) is compatible with the state-data layer of \cite{RahmanQuiverDataI}, the mixed-Hodge-module refinement, and the corrected perverse shadow.

\section{Compatibility with data in previous work}\label{sec:compatibility-with-partI-data}

The purpose of this section is to show that the incidence and quiver-theoretic packages constructed in the present work are compatible with the state-data layer extracted in \cite{RahmanQuiverDataI}. More precisely, we show that the interaction package of the present paper does not replace the objects
\[
E_\Sigma\cong \bigoplus_{k=1}^r \Q e_k,
\qquad
c_\Sigma=(c_1,\dots,c_r)\in\Q^r,
\]
but is built on top of them. We also show that this compatibility persists under the mixed-Hodge-module refinement and under passage to the corrected perverse shadow. In this way the present section identifies the precise sense in which the quiver-theoretic package
\[
\mathfrak Q_\Sigma:=\bigl(V_\Sigma,E_\Sigma,c_\Sigma,\mathcal F_\Sigma,I_\Sigma\bigr)
\]
is an extension of the algebraic state-data package
\[
A_\Sigma:=\bigl(V_\Sigma,E_\Sigma,c_\Sigma\bigr)
\]
of \cite{RahmanQuiverDataI}.

\subsection{Compatibility with the coefficient vector}

We begin with the compatibility of the interaction package with the coefficient vector extracted in \cite{RahmanQuiverDataI}.

\begin{proposition}[compatibility with the coefficient vector]
\label{prop:compatibility-with-coefficient-vector}
Let
\[
A_\Sigma:=\bigl(V_\Sigma,E_\Sigma,c_\Sigma\bigr)
\]
be the algebraic state-data package of \cite{RahmanQuiverDataI}, and let
\[
\mathfrak Q_\Sigma:=\bigl(V_\Sigma,E_\Sigma,c_\Sigma,\mathcal F_\Sigma,I_\Sigma\bigr)
\]
be the finite-node quiver-theoretic package of Definition~\ref{def:finite-node-quiver-theoretic-package}. Then the coefficient vector
\[
c_\Sigma=(c_1,\dots,c_r)\in\Q^r
\]
appears in \(\mathfrak Q_\Sigma\) unchanged as its third component.
\end{proposition}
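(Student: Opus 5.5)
The claim is essentially definitional, so the plan is to unwind Definition~\ref{def:finite-node-quiver-theoretic-package} and check that nothing in the construction of $\mathfrak Q_\Sigma$ alters the third slot.

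First, I recall that the coefficient vector is fixed upstream. By \eqref{eq:partII-cSigma-expansion} and \eqref{eq:partII-def-cSigma}, $c_\Sigma=(c_1,\dots,c_r)$ is the coordinate vector of $[\mathcal P]_{\mathrm{perv}}$ in the nodewise basis $\{e_k\}$ of $E_\Sigma\cong\bigoplus_{k=1}^r\Q e_k$. Both the class and the basis come from \cite{RahmanQuiverDataI}. So $c_\Sigma$ depends only on the corrected extension $0\to IC_{X_0}\to\mathcal P\to Q_\Sigma\to 0$ and the nodewise decomposition.

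Second, I check that the interaction-layer constructions never touch $c_\Sigma$. Walking through them in order:
\begin{enumerate}
\item the extended vertex set of Definition~\ref{def:extended-vertex-set};
\item the relations $\rightsquigarrow_{\mathrm{bulk}}$, $\rightsquigarrow_{\mathrm{med}}$ and $\rightsquigarrow_\Sigma$ of Definitions~\ref{def:basic-functorial-coupling-relation}, \ref{def:mediated-coupling-relation} and \ref{def:total-functorial-coupling-relation};
\item the function $\chi_\Sigma$ and the matrix $I_\Sigma$ of Definitions~\ref{def:binary-incidence-function} and \ref{def:binary-incidence-matrix}.
\end{enumerate}
Each of these uses only the schober data $\mathcal C_{\mathrm{bulk}}$, $\mathcal C_{p_k}$, $\Phi_k$ and $\Psi_k$ together with the vertex set $V_\Sigma$. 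None of them involves $E_\Sigma$ or $c_\Sigma$, so no rescaling or basis change of $E_\Sigma$ is introduced along the way.

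Third, I apply Proposition~\ref{prop:state-data-embedding}: the first three components of $\mathfrak Q_\Sigma$ are literally $(V_\Sigma,E_\Sigma,c_\Sigma)$ from $A_\Sigma$. Hence the third component is exactly $c_\Sigma$, with the same basis $\{e_k\}$, and it is unchanged.

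The only point needing care is the meaning of "unchanged" when $E_\Sigma$ is identified with $\bigoplus\Q e_k$ only up to canonical isomorphism. A change of identification could in principle rescale the $c_k$. I would handle this by noting that $\mathfrak Q_\Sigma$ carries $E_\Sigma$ itself, together with the same nodewise basis fixed in \cite{RahmanQuiverDataI}. Since no new identification is made anywhere in Sections~\ref{sec:extended-vertex-data}--\ref{sec:quiver-theoretic-package}, the coordinates $c_k$ are preserved.
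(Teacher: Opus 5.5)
Your proposal is correct and follows the paper's own argument: the paper reads off the third slot of $\mathfrak Q_\Sigma$ directly from Definition~\ref{def:finite-node-quiver-theoretic-package} and notes that no operation in the construction acts on $c_\Sigma$. Your walk through the interaction-layer definitions, and your remark about the fixed nodewise basis, make that observation explicit.
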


\begin{proof}
By Definition~\ref{def:finite-node-quiver-theoretic-package}, the third component of \(\mathfrak Q_\Sigma\) is precisely the coefficient vector \(c_\Sigma\) extracted in \cite{RahmanQuiverDataI}. No new operation is applied to this vector in the construction of \(\mathfrak Q_\Sigma\). Therefore the coefficient vector is preserved unchanged.
\end{proof}

\begin{remark}
\label{rem:coefficient-vector-persists}
Proposition~\ref{prop:compatibility-with-coefficient-vector} expresses the most basic form of compatibility. The interaction layer extracted in the present work does not redefine the global corrected class or its coordinates. Rather, it treats the vector \(c_\Sigma\) as fixed finite state data and adds to it the coupling and incidence structure determined by the schober functoriality.
\end{remark}

\subsection{Compatibility with the nodewise basis}

We next relate the interaction package to the distinguished nodewise basis \(\{e_1,\dots,e_r\}\) of the coupling space \(E_\Sigma\).

\begin{proposition}[compatibility with the nodewise basis]
\label{prop:compatibility-with-nodewise-basis}
Let
\[
E_\Sigma\cong \bigoplus_{k=1}^r \Q e_k
\]
be the distinguished nodewise decomposition of the coupling space from \cite{RahmanQuiverDataI,RahmanMixedHodgeModules}. Then the quiver-theoretic package
\[
\mathfrak Q_\Sigma:=\bigl(V_\Sigma,E_\Sigma,c_\Sigma,\mathcal F_\Sigma,I_\Sigma\bigr)
\]
is compatible with this decomposition in the sense that:
\begin{enumerate}
\item the vertex set
\[
V_\Sigma:=\{v_1,\dots,v_r\}
\]
is indexed by the same node set \(\Sigma=\{p_1,\dots,p_r\}\) as the basis \(\{e_1,\dots,e_r\}\);
\item the coefficient vector
\[
c_\Sigma=(c_1,\dots,c_r)
\]
is the coordinate vector of the corrected global class with respect to that same basis;
\item the incidence matrix \(I_\Sigma\) is indexed by the extended vertex set
\[
V_\Sigma^{\mathrm{ext}}:=V_\Sigma\sqcup\{v_{\mathrm{bulk}}\},
\]
whose localized part is exactly the vertex set indexed by the basis labels.
\end{enumerate}
\end{proposition}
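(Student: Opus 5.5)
The plan is to verify the three clauses separately. Each one reduces to the common node indexing recorded in Remark~\ref{rem:partII-same-node-indexing} together with the definitions of the components of \(\mathfrak Q_\Sigma\). No new input beyond \cite{RahmanQuiverDataI,RahmanMixedHodgeModules} and the definitions of the present paper is needed. The whole argument is bookkeeping of indices, so the main task is to state exactly which bijection with \(\Sigma\) is used at each step.

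For clause (1), recall from \eqref{eq:partII-def-VSigma} that \(v_k\) is attached to \(p_k\). The decomposition \eqref{eq:partII-ESigma-decomposition} is induced by the splitting \(Q_\Sigma=\bigoplus_k i_{k*}\Q_{\{p_k\}}\). Additivity of \(\Ext^1_{\Perv(X_0;\Q)}(-,IC_{X_0})\) in the first variable gives
\[
E_\Sigma\cong\bigoplus_{k=1}^r \Ext^1(i_{k*}\Q_{\{p_k\}},IC_{X_0}),
\]
and by \cite{RahmanQuiverDataI} each summand is the line \(\Q e_k\). Both \(k\mapsto v_k\) and \(k\mapsto e_k\) therefore factor through the same bijection \(\{1,\dots,r\}\cong\Sigma\), and this is the assertion of (1).

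For clause (2), Proposition~\ref{prop:compatibility-with-coefficient-vector} shows that \(c_\Sigma\) enters \(\mathfrak Q_\Sigma\) unchanged. By \eqref{eq:partII-cSigma-expansion}, its entries are the coordinates of \([\mathcal P]_{\mathrm{perv}}\) in the basis \(\{e_k\}\). It therefore suffices to note that \(\{e_k\}\) is the same basis as in (1), with no reindexing between the two.

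For clause (3), Definition~\ref{def:binary-incidence-matrix} indexes \(I_\Sigma\) by \(V_\Sigma^{\mathrm{ext}}\) with the ordering \(v_1,\dots,v_r,v_{\mathrm{bulk}}\). Proposition~\ref{prop:extended-vertex-decomposition} identifies the localized part of this set with \(V_\Sigma\). Combining this with (1), the localized block of \(I_\Sigma\) is indexed by exactly the labels of \(\{e_k\}\).

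The only step that is not purely formal is in clause (1). One must make sure the indexing of the localized categorical sectors \(\mathcal C_{p_k}\) used to define \(\rightsquigarrow_\Sigma\) is the same indexing as the one on the perverse side, and not merely some bijection with it. I would settle this by citing the canonical indexing of sectors from \cite[Theorem~6.1]{RahmanMultiNodeSchoberPaper}, which was already invoked in Lemma~\ref{lem:well-defined-basic-coupling}. I would also use the shadow identification \eqref{eq:partII-shadow-is-P}, which matches the sector at \(p_k\) with the summand \(i_{k*}\Q_{\{p_k\}}\) of \(Q_\Sigma\).
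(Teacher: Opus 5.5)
Your proposal is correct and follows essentially the same route as the paper, which also treats all three clauses as index bookkeeping: the $e_k$ are indexed by $\Sigma$ via \cite{RahmanQuiverDataI,RahmanMixedHodgeModules} and the $v_k$ via \eqref{eq:partII-def-VSigma}, so (1) and (2) follow from the construction of $A_\Sigma$, and (3) follows from the definitions of $V_\Sigma^{\mathrm{ext}}$ and $I_\Sigma$. Your additivity-of-$\Ext^1$ justification is a harmless refinement, and your extra check that the sectors $\mathcal C_{p_k}$ are indexed compatibly is not actually needed, since none of the three clauses refers to the categorical sectors.
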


\begin{proof}
By \cite{RahmanQuiverDataI}, the basis vectors \(e_k\) are canonically indexed by the node set \(\Sigma=\{p_1,\dots,p_r\}\) through the distinguished local corrected ODP extensions; see \cite{RahmanQuiverDataI,RahmanMixedHodgeModules}. By Definition~\eqref{eq:partII-def-VSigma}, the vertices \(v_k\in V_\Sigma\) are indexed by the same node set. Hence statements (1) and (2) follow directly from the construction of \(A_\Sigma:=\bigl(V_\Sigma,E_\Sigma,c_\Sigma\bigr)\) in \cite{RahmanQuiverDataI}. Statement (3) follows from the definition of the extended vertex set
\[
V_\Sigma^{\mathrm{ext}}:=V_\Sigma\sqcup\{v_{\mathrm{bulk}}\}
\]
and the definition of the incidence matrix \(I_\Sigma\), whose localized rows and columns are indexed by the same vertices \(v_k\).
\end{proof}

\begin{remark}
\label{rem:nodewise-basis-and-vertices}
The point of Proposition~\ref{prop:compatibility-with-nodewise-basis} is that the node labels are used consistently across all layers of the theory:
\[
p_k\ \longleftrightarrow\ v_k\ \longleftrightarrow\ e_k\ \longleftrightarrow\ c_k.
\]
Thus the incidence layer of the present work does not disturb the nodewise coordinate system extracted in \cite{RahmanQuiverDataI}. It is built on that coordinate system.
\end{remark}

\subsection{Compatibility with the mixed-Hodge refinement}

We now verify compatibility with the mixed-Hodge-module refinement.

Recall from \cite{RahmanMixedHodgeModules} that the corrected perverse extension
\[
0\to IC_{X_0}\to \mathcal P\to Q_\Sigma\to 0
\]
admits a mixed-Hodge-module lift
\[
0\to IC^H_{X_0}\to \mathcal P^H\to Q_\Sigma^H\to 0,
\qquad
Q_\Sigma^H:=\bigoplus_{k=1}^r i_{k*}\Q^H_{\{p_k\}}(-1),
\]
with
\[
\rat(\mathcal P^H)\cong \mathcal P
\qquad\text{and}\qquad
\rat(Q_\Sigma^H)\cong Q_\Sigma.
\]

\begin{proposition}[compatibility with the mixed-Hodge refinement]
\label{prop:compatibility-with-mixed-hodge-refinement}
The finite-node quiver-theoretic package
\[
\mathfrak Q_\Sigma:=\bigl(V_\Sigma,E_\Sigma,c_\Sigma,\mathcal F_\Sigma,I_\Sigma\bigr)
\]
is compatible with the mixed-Hodge-module refinement in the following sense:
\begin{enumerate}
\item the same finite node set
\[
\Sigma=\{p_1,\dots,p_r\}
\]
indexes the localized Hodge quotient
\[
Q_\Sigma^H:=\bigoplus_{k=1}^r i_{k*}\Q^H_{\{p_k\}}(-1),
\]
the localized perverse quotient
\[
Q_\Sigma:=\bigoplus_{k=1}^r i_{k*}\Q_{\{p_k\}},
\]
and the localized vertex set
\[
V_\Sigma:=\{v_1,\dots,v_r\};
\]
\item the state-data portion
\[
\bigl(V_\Sigma,E_\Sigma,c_\Sigma\bigr)
\]
of \(\mathfrak Q_\Sigma\) is the realized algebraic shadow of the mixed-Hodge extension package;
\item the incidence layer
\[
\bigl(\mathcal F_\Sigma,I_\Sigma\bigr)
\]
is defined on the same node-indexed architecture and therefore respects the mixed-Hodge refinement already established in \cite{RahmanQuiverDataI}.
\end{enumerate}
\end{proposition}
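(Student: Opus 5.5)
The plan is to verify the three clauses of Proposition~\ref{prop:compatibility-with-mixed-hodge-refinement} in order, using the mixed-Hodge lift recalled above together with Remark~\ref{rem:partII-same-node-indexing} and Proposition~\ref{prop:compatibility-with-nodewise-basis}. No new homological input is needed; the argument is a bookkeeping of indices and realization functors.

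\textbf{Clause (1).} I will read off the summands of $Q_\Sigma^H$ and of $Q_\Sigma$. Both are indexed by $k=1,\dots,r$ through the same closed embeddings $i_k:\{p_k\}\hookrightarrow X_0$. By the recalled identity $\rat(Q_\Sigma^H)\cong Q_\Sigma$, the functor $\rat$ carries the $k$-th Hodge summand $i_{k*}\Q^H_{\{p_k\}}(-1)$ to the $k$-th perverse summand $i_{k*}\Q_{\{p_k\}}$. This holds because $\rat$ commutes with $i_{k*}$, and the Tate twist is invisible after realization. Combining this with \eqref{eq:partII-def-VSigma}, where $v_k$ corresponds to $p_k$, gives the common indexing $p_k \leftrightarrow i_{k*}\Q^H_{\{p_k\}}(-1) \leftrightarrow i_{k*}\Q_{\{p_k\}} \leftrightarrow v_k$. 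This is exactly Remark~\ref{rem:partII-same-node-indexing}.

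\textbf{Clause (2).} I will invoke the result of \cite{RahmanQuiverDataI,RahmanMixedHodgeModules}: the Hodge extension class $[\mathcal P^H]$ maps under $\rat$ to $[\mathcal P]_{\mathrm{perv}}\in E_\Sigma$. The realization map on $\Ext^1$ is compatible with the nodewise decompositions, sending the $k$-th local Hodge line to $\Q e_k$. Consequently:
\begin{itemize}
\item $E_\Sigma$ is the target of this realization;
\item $c_\Sigma$ is the coordinate vector of $\rat[\mathcal P^H]=[\mathcal P]_{\mathrm{perv}}$ in the basis $\{e_k\}$, by \eqref{eq:partII-cSigma-expansion};
\item $V_\Sigma$ is the index set from clause (1).
\end{itemize}
Proposition~\ref{prop:state-data-embedding} then identifies these as the first three components of $\mathfrak Q_\Sigma$. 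I expect this step to be the main obstacle. It relies on the compatibility of $\rat$ with the nodewise splitting of the Ext group, which is an input from the earlier papers rather than something proved here. I will cite it as such, and check that the twist $(-1)$ does not rescale the basis lines in a way that would change $c_\Sigma$. If needed, I will fix the normalization by declaring $e_k$ to be the realization of the distinguished local Hodge class.

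\textbf{Clause (3).} I will observe that $\mathcal F_\Sigma$ and $I_\Sigma$ depend only on the extended vertex set $V_\Sigma^{\mathrm{ext}}=V_\Sigma\sqcup\{v_{\mathrm{bulk}}\}$ and on the attachment functors. This follows from Definitions~\ref{def:basic-functorial-coupling-relation}, \ref{def:mediated-coupling-relation}, \ref{def:binary-incidence-function} and \ref{def:binary-incidence-matrix}. The localized rows and columns of $I_\Sigma$ are indexed by the same $v_k$ as in clause (1). Through the shadow identification \eqref{eq:partII-shadow-is-P} and $\rat(\mathcal P^H)\cong\mathcal P$, the schober sector $\mathcal C_{p_k}$ sits over the same node $p_k$ as the $k$-th Hodge summand. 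Hence the incidence layer introduces no relabeling of nodes, and it is defined on the node-indexed architecture shared by the Hodge, perverse and categorical sides.
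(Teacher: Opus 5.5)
Your proposal is correct and follows essentially the same route as the paper. Clause (1) reduces to the common node indexing of Remark~\ref{rem:partII-same-node-indexing}. Clause (2) reduces to the mixed-Hodge compatibility theorem of \cite{RahmanQuiverDataI} together with Proposition~\ref{prop:state-data-embedding}. Clause (3) reduces to the fact that $\mathcal F_\Sigma$ and $I_\Sigma$ are built on $V_\Sigma^{\mathrm{ext}}$ from the same node set. Your extra bookkeeping with $\rat$ and $i_{k*}$ only makes explicit what the paper cites.

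The normalization issue you flag in clause (2) is not a real obstacle. By definition, $c_\Sigma$ is the coordinate vector of $[\mathcal P]_{\mathrm{perv}}=\rat[\mathcal P^H]$ in the basis $\{e_k\}$ already fixed in \cite{RahmanQuiverDataI}, so the twist cannot change it. You should drop the fallback of redefining $e_k$: rescaling the basis would alter $c_\Sigma$ rather than confirm it.
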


\begin{proof}
Statement (1) is exactly the common indexing statement proved in \cite{RahmanQuiverDataI} and recalled in Section~\ref{sec:finite-node-input}. Statement (2) is the content of the mixed-Hodge compatibility theorem of \cite{RahmanQuiverDataI}, together with the fact that the first three components of \(\mathfrak Q_\Sigma\) are the state-data package \(A_\Sigma\) itself. Statement (3) follows because the functorial coupling datum \(\mathcal F_\Sigma\) and the incidence matrix \(I_\Sigma\) are defined using the same finite node set and the same extended vertex set obtained by adjoining the bulk vertex. Thus the incidence layer is imposed on the same finite-node architecture already refined on the mixed-Hodge side.
\end{proof}

\begin{remark}
\label{rem:mixed-hodge-compatibility-meaning}
Proposition~\ref{prop:compatibility-with-mixed-hodge-refinement} does not claim that \(I_\Sigma\) itself is a Hodge-theoretic invariant. Rather, it states that the quiver-theoretic package is built on a finite-node architecture that is already compatible with the mixed-Hodge-module refinement. In this sense, the interaction layer is not independent of the Hodge layer, even though it is not itself defined internally in \(MHM(X_0)\).
\end{remark}

\subsection{Compatibility with the corrected perverse shadow}

We next compare the quiver-theoretic package with the corrected perverse shadow of the schober datum.

\begin{proposition}[compatibility with the corrected perverse shadow]
\label{prop:compatibility-with-corrected-perverse-shadow}
Let
\[
S_\Sigma:=\Bigl(\mathcal C_{\mathrm{bulk}},\{\mathcal C_{p_k}\}_{k=1}^r,\{\Phi_k,\Psi_k\}_{k=1}^r,Sh(S_\Sigma)\Bigr)
\]
be the finite-node schober package. Under the shadow identification
\[
Sh(S_\Sigma)\cong \mathcal P,
\]
the quiver-theoretic package
\[
\mathfrak Q_\Sigma:=\bigl(V_\Sigma,E_\Sigma,c_\Sigma,\mathcal F_\Sigma,I_\Sigma\bigr)
\]
is compatible with the corrected finite-node perverse extension in the following sense:
\begin{enumerate}
\item the state-data package
\[
\bigl(V_\Sigma,E_\Sigma,c_\Sigma\bigr)
\]
is extracted from the corrected perverse extension shadow;
\item the coupling datum
\[
\mathcal F_\Sigma:=\{(\Phi_k,\Psi_k)\}_{k=1}^r
\]
is carried by the schober package whose shadow is \(\mathcal P\);
\item the incidence matrix \(I_\Sigma\) is the binary decategorified image of the coupling relation extracted from that same schober package.
\end{enumerate}
\end{proposition}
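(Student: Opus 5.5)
I will verify the three clauses in order, each by tracing the relevant component of \(\mathfrak Q_\Sigma\) back to its construction and then to the shadow identification \(Sh(S_\Sigma)\cong\mathcal P\) of \eqref{eq:partII-shadow-is-P}. No new construction is needed. The argument is a bookkeeping check that every layer of \(\mathfrak Q_\Sigma\) is anchored either in the extension \(0\to IC_{X_0}\to\mathcal P\to Q_\Sigma\to 0\) or in the schober package whose shadow is that same \(\mathcal P\).

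For clause (1), I will use Proposition~\ref{prop:state-data-embedding}: the first three components of \(\mathfrak Q_\Sigma\) are exactly \(A_\Sigma\). By \eqref{eq:partII-def-ESigma}--\eqref{eq:partII-def-cSigma}, \(V_\Sigma\) and \(E_\Sigma\) are determined by the terms \(IC_{X_0}\) and \(Q_\Sigma\) of the corrected extension, and \(c_\Sigma\) is the coordinate vector of \([\mathcal P]_{\mathrm{perv}}\) in the nodewise basis. Transporting \(\mathcal P\) along \(Sh(S_\Sigma)\cong\mathcal P\) shows that these data are read off from the shadow of \(S_\Sigma\). The one point to check is that the extension class is unchanged under this isomorphism. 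I would argue that the isomorphism is one of perverse sheaves fitting into the same extension, so it preserves the class \([\mathcal P]_{\mathrm{perv}}\in E_\Sigma\) up to the identification fixed in \cite{RahmanMultiNodeSchoberPaper}. Hence \(c_\Sigma\) is unaffected, consistent with Proposition~\ref{prop:compatibility-with-coefficient-vector}.

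For clause (2), the statement is immediate from Definition~\ref{def:partII-functorial-coupling-datum}. \(\mathcal F_\Sigma\) consists of the attachment functors listed in \(S_\Sigma\), and \(S_\Sigma\) is by definition the package whose fourth entry is \(Sh(S_\Sigma)\cong\mathcal P\). For clause (3), I will cite Theorem~\ref{thm:decategorified-incidence-theorem} and Proposition~\ref{prop:canonical-decategorified-incidence-package}. \(I_\Sigma\) is the characteristic matrix of \(\rightsquigarrow_\Sigma\), and by Theorem~\ref{thm:functorial-coupling-theorem} that relation is built solely from the \(\Phi_k,\Psi_k\) of the same \(S_\Sigma\). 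I will also invoke Remark~\ref{rem:indexing-convention-incidence} so that the ordering choice does not interfere.

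The main obstacle is clause (1). Clauses (2) and (3) only use the fact that all data come from one and the same \(S_\Sigma\). Clause (1), by contrast, requires that the node indexing of the schober sectors \(\mathcal C_{p_k}\) match the node indexing of the summands of \(Q_\Sigma\) under the shadow isomorphism. I would handle this using Remark~\ref{rem:partII-same-node-indexing}, the common indexing established in \cite{RahmanQuiverDataI}, together with Proposition~\ref{prop:compatibility-with-nodewise-basis}. Together these ensure that \(v_k\), \(e_k\), \(c_k\), \(\mathcal C_{p_k}\) and \((\Phi_k,\Psi_k)\) all refer to the same node \(p_k\), so that \(\mathcal F_\Sigma\) and \(I_\Sigma\) sit on the same indexing as the state-data layer.
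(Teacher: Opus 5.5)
Your proposal is correct and follows the paper's proof, which is exactly this citation chain: $Sh(S_\Sigma)\cong\mathcal P$ from \cite{RahmanMultiNodeSchoberPaper}, the extraction of $A_\Sigma$ from $\mathcal P$ in \cite{RahmanQuiverDataI}, Definition~\ref{def:partII-functorial-coupling-datum} for $\mathcal F_\Sigma$, and Theorems~\ref{thm:canonical-functorial-incidence-package} and~\ref{thm:decategorified-incidence-theorem} for $\rightsquigarrow_\Sigma$ and $I_\Sigma$. Your extra checks on node indexing and on transporting $[\mathcal P]_{\mathrm{perv}}$ across the shadow isomorphism are not needed, since $A_\Sigma$ is defined directly from $\mathcal P$; also, an abstract isomorphism of perverse sheaves preserves that class only if it is an isomorphism of extensions, so that step should not be justified the way you phrased it.
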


\begin{proof}
By \cite{RahmanMultiNodeSchoberPaper}, one has
\[
Sh(S_\Sigma)\cong \mathcal P.
\]
By \cite{RahmanQuiverDataI}, the corrected perverse object \(\mathcal P\) determines the state-data package
\[
A_\Sigma:=\bigl(V_\Sigma,E_\Sigma,c_\Sigma\bigr).
\]
By Definition~\ref{def:partII-functorial-coupling-datum}, the schober package determines the coupling datum \(\mathcal F_\Sigma\). By Theorem~\ref{thm:canonical-functorial-incidence-package}, the same schober package determines the incidence relation \(\rightsquigarrow_\Sigma\), and by Theorem~\ref{thm:decategorified-incidence-theorem} this determines the incidence matrix \(I_\Sigma\). Hence all components of \(\mathfrak Q_\Sigma\) are compatible with the corrected perverse shadow.
\end{proof}

\begin{remark}
\label{rem:shadow-compatibility-principle}
The point of Proposition~\ref{prop:compatibility-with-corrected-perverse-shadow} is that the interaction layer of the present work is not added externally to the corrected perverse extension. It is extracted from the schober realization whose shadow is exactly that corrected perverse object. Thus the package \(\mathfrak Q_\Sigma\) remains anchored in the corrected extension architecture throughout.
\end{remark}

\subsection{Summary compatibility theorem}

We now assemble the preceding results into a single theorem.

\begin{theorem}[compatibility with data in \cite{RahmanQuiverDataI}]
\label{thm:compatibility-with-partI-data}
Let
\[
A_\Sigma:=\bigl(V_\Sigma,E_\Sigma,c_\Sigma\bigr)
\]
be the algebraic state-data package extracted in \cite{RahmanQuiverDataI}, and let
\[
\mathfrak Q_\Sigma:=\bigl(V_\Sigma,E_\Sigma,c_\Sigma,\mathcal F_\Sigma,I_\Sigma\bigr)
\]
be the finite-node quiver-theoretic package of Definition~\ref{def:finite-node-quiver-theoretic-package}. Then \(\mathfrak Q_\Sigma\) is compatible with the \cite{RahmanQuiverDataI} data in the following sense:
\begin{enumerate}
\item its first three components are exactly the state-data package \(A_\Sigma\);
\item its coordinate system is indexed by the same node set \(\Sigma=\{p_1,\dots,p_r\}\) that indexes the distinguished nodewise basis \(\{e_1,\dots,e_r\}\) and the coefficient vector \(c_\Sigma\);
\item it is compatible with the mixed-Hodge-module refinement of the corrected extension;
\item it is compatible with the corrected perverse shadow of the finite-node schober package.
\end{enumerate}
\end{theorem}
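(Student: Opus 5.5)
The plan is to prove Theorem~\ref{thm:compatibility-with-partI-data} by assembling results already established in this section, treating its four clauses in turn. No new construction is needed: each clause reduces to an earlier proposition together with the definition of \(\mathfrak Q_\Sigma\).

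\textbf{Clause (1).} This will follow directly from Definition~\ref{def:finite-node-quiver-theoretic-package}, in the form recorded in Proposition~\ref{prop:state-data-embedding}. The first three entries of the tuple \(\mathfrak Q_\Sigma=(V_\Sigma,E_\Sigma,c_\Sigma,\mathcal F_\Sigma,I_\Sigma)\) are literally the entries of \(A_\Sigma\). Proposition~\ref{prop:compatibility-with-coefficient-vector} adds that the construction applies no operation to \(c_\Sigma\), so \(c_\Sigma\) appears unchanged.

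\textbf{Clause (2).} I will cite Proposition~\ref{prop:compatibility-with-nodewise-basis}. Its items (1) and (2) say that the vertices \(v_k\), the basis vectors \(e_k\) and the coefficients \(c_k\) are all indexed by the same node \(p_k\). Its item (3) says that the localized rows and columns of \(I_\Sigma\) carry the same labels. To cover the last component, I will also note that \(\mathcal F_\Sigma\) is indexed by \(k\) through the sectors \(\mathcal C_{p_k}\), via the common indexing of Remark~\ref{rem:partII-same-node-indexing}. This gives the full chain \(p_k\leftrightarrow v_k\leftrightarrow e_k\leftrightarrow c_k\leftrightarrow(\Phi_k,\Psi_k)\).

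\textbf{Clauses (3) and (4).} These are exactly Proposition~\ref{prop:compatibility-with-mixed-hodge-refinement} and Proposition~\ref{prop:compatibility-with-corrected-perverse-shadow}. I will state explicitly that \emph{compatible} is meant in the precise senses enumerated in those propositions. For (3), this is the common indexing of \(Q_\Sigma^H\), \(Q_\Sigma\) and \(V_\Sigma\), together with the identification of \(A_\Sigma\) as the realized shadow of the mixed-Hodge extension via \(\rat(\mathcal P^H)\cong\mathcal P\). For (4), this is the shadow identification \(Sh(S_\Sigma)\cong\mathcal P\) of \eqref{eq:partII-shadow-is-P}, combined with Theorems~\ref{thm:canonical-functorial-incidence-package} and \ref{thm:decategorified-incidence-theorem}.

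\textbf{Main obstacle.} The only real difficulty is making sure the informal word \emph{compatible} in (3) and (4) is pinned to these enumerated senses. In particular, as Remark~\ref{rem:mixed-hodge-compatibility-meaning} stresses, no Hodge-theoretic status is claimed for \(I_\Sigma\). With that convention fixed, the proof is a bookkeeping assembly of the four cited results.
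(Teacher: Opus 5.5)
Your proposal is correct and matches the paper's proof, which also discharges clauses (1)--(4) by citing Propositions~\ref{prop:state-data-embedding}, \ref{prop:compatibility-with-nodewise-basis}, \ref{prop:compatibility-with-mixed-hodge-refinement}, and \ref{prop:compatibility-with-corrected-perverse-shadow} in turn. Your additions, namely invoking Proposition~\ref{prop:compatibility-with-coefficient-vector} and extending the index chain to \((\Phi_k,\Psi_k)\), are harmless refinements of the same bookkeeping argument.
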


\begin{proof}
Statement (1) is Proposition~\ref{prop:state-data-embedding}. Statement (2) is Proposition~\ref{prop:compatibility-with-nodewise-basis}. Statement (3) is Proposition~\ref{prop:compatibility-with-mixed-hodge-refinement}. Statement (4) is Proposition~\ref{prop:compatibility-with-corrected-perverse-shadow}. Combining these gives the result.
\end{proof}

\begin{corollary}[present work as an extension of \cite{RahmanQuiverDataI}]
\label{cor:partII-extension-of-partI}
The quiver-theoretic package \(\mathfrak Q_\Sigma\) is an extension of the algebraic state-data package \(A_\Sigma\) by the interaction/incidence layer
\[
\bigl(\mathcal F_\Sigma,I_\Sigma\bigr).
\]
\end{corollary}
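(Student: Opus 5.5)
The corollary follows directly from Theorem~\ref{thm:compatibility-with-partI-data}, together with the structural decomposition already recorded in Propositions~\ref{prop:state-data-embedding} and \ref{prop:interaction-data-embedding}. The plan is to make precise what ``extension of $A_\Sigma$ by $(\mathcal F_\Sigma,I_\Sigma)$'' means at the level of tuples. We then verify the two defining features: the original package is recovered unchanged, and the added components are determined by data compatible with it.

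First, we would exhibit the forgetful projection
\[
\mathfrak Q_\Sigma=\bigl(V_\Sigma,E_\Sigma,c_\Sigma,\mathcal F_\Sigma,I_\Sigma\bigr)\longmapsto \bigl(V_\Sigma,E_\Sigma,c_\Sigma\bigr)=A_\Sigma ,
\]
which discards the last two components. By Theorem~\ref{thm:compatibility-with-partI-data}(1), equivalently Proposition~\ref{prop:state-data-embedding}, this projection returns exactly $A_\Sigma$. By Proposition~\ref{prop:compatibility-with-coefficient-vector}, no component of $A_\Sigma$ is modified in passing to $\mathfrak Q_\Sigma$. This gives the ``$A_\Sigma$ is a sub-package'' half of the extension statement.

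Second, we would identify the complementary data, that is, the fibre of this projection, with the pair $(\mathcal F_\Sigma,I_\Sigma)$. By Proposition~\ref{prop:interaction-data-embedding}, these are precisely the last two entries. By Theorem~\ref{thm:decategorified-incidence-theorem} and Definition~\ref{def:partII-functorial-coupling-datum}, both are canonically determined by $S_\Sigma$ alone. The extending data must live on the same index set as $A_\Sigma$. This is supplied by Theorem~\ref{thm:compatibility-with-partI-data}(2)--(4): $\mathcal F_\Sigma$ is indexed by the same $k$ labelling $e_k$ and $c_k$, and $I_\Sigma$ is indexed by $V_\Sigma^{\mathrm{ext}}$, whose localized part is $V_\Sigma$. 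The shadow identification $Sh(S_\Sigma)\cong\mathcal P$ ensures that the interaction data come from a realization of the same object from which $A_\Sigma$ was extracted.

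The only real obstacle is definitional. The paper has not fixed a categorical notion of ``extension'' of packages; Remark~\ref{rem:package-as-state-plus-interaction} calls the decomposition conceptual. I would therefore state explicitly, within the proof, that ``extension'' means a tuple whose projection to the first three components is $A_\Sigma$, and whose remaining components are canonically determined data indexed compatibly with $\Sigma$. With that convention fixed, the corollary reduces to citing the items above, and no further computation is needed.
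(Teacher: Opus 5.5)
Your proposal is correct and is essentially the paper's argument: the paper reads the claim directly off Definition~\ref{def:finite-node-quiver-theoretic-package}, observing that $\mathfrak Q_\Sigma=(V_\Sigma,E_\Sigma,c_\Sigma,\mathcal F_\Sigma,I_\Sigma)$ consists of $A_\Sigma$ followed by the two added components $(\mathcal F_\Sigma,I_\Sigma)$. Your further appeals to the summary compatibility theorem (common indexing, the shadow identification, canonical determination by $S_\Sigma$) only spell out your chosen meaning of ``extension'' and are not needed for the paper's version of the statement.
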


\begin{proof}
By Definition~\ref{def:finite-node-quiver-theoretic-package},
\[
\mathfrak Q_\Sigma:=\bigl(V_\Sigma,E_\Sigma,c_\Sigma,\mathcal F_\Sigma,I_\Sigma\bigr),
\]
so the package consists precisely of the \cite{RahmanQuiverDataI} state-data package together with the additional interaction/incidence components \(\mathcal F_\Sigma\) and \(I_\Sigma\).
\end{proof}

\begin{remark}
\label{rem:partII-extension-principle}
The preceding corollary is the conceptual summary of the present section. \cite{RahmanQuiverDataI} extracted the intrinsic finite algebraic state variables. The present work keeps those variables fixed and adds the interaction/incidence data extracted from schober functoriality. Thus the quiver-theoretic package \(\mathfrak Q_\Sigma\) is not a replacement for the state-data package \(A_\Sigma\); it is its first interaction-theoretic extension.
\end{remark}

The next section proves that the package \(\mathfrak Q_\Sigma\) is intrinsic under the appropriate notion of equivalence of finite-node schober realizations.

\section{Invariance under equivalence}\label{sec:invariance-under-equivalence}

The purpose of this section is to show that the interaction and quiver-theoretic packages extracted in the present paper are intrinsic under the correct notion of equivalence. The relevant equivalence notion is the one already built into the finite-node schober formalism: two finite-node schober realizations should be regarded as equivalent when they have equivalent bulk categories, equivalent localized sectors at each node, compatible attachment functors up to natural isomorphism, and the same corrected perverse shadow; see \cite{RahmanMultiNodeSchoberPaper}. The main result of the present section is that equivalent finite-node schober realizations determine the same functorial incidence package and hence the same quiver-theoretic package, up to the canonical identifications induced by the common node set.

\subsection{Equivalence of finite-node schober realizations}

We first record the notion of equivalence used below.

\begin{definition}[equivalent finite-node schober realizations]
\label{def:equivalent-finite-node-schober-realizations}
Let
\[
S_\Sigma:=\Bigl(\mathcal C_{\mathrm{bulk}},\{\mathcal C_{p_k}\}_{k=1}^r,\{\Phi_k,\Psi_k\}_{k=1}^r,Sh(S_\Sigma)\Bigr)
\]
and
\[
T_\Sigma:=\Bigl(\mathcal D_{\mathrm{bulk}},\{\mathcal D_{p_k}\}_{k=1}^r,\{\Phi_k',\Psi_k'\}_{k=1}^r,Sh(T_\Sigma)\Bigr)
\]
be finite-node schober packages indexed by the same finite node set
\[
\Sigma=\{p_1,\dots,p_r\}.
\]
We say that \(S_\Sigma\) and \(T_\Sigma\) are \emph{equivalent finite-node schober realizations} if they are equivalent as finite-node schober data in the sense of \cite[Definition~3.5]{RahmanMultiNodeSchoberPaper}.
\end{definition}

\begin{remark}
\label{rem:equivalent-schober-realizations-unpacked}
Concretely, the equivalence of Definition~\ref{def:equivalent-finite-node-schober-realizations} requires:
\begin{enumerate}
\item an equivalence between the bulk categories
\[
\mathcal C_{\mathrm{bulk}}\simeq \mathcal D_{\mathrm{bulk}};
\]
\item for each node \(p_k\in\Sigma\), an equivalence between the localized categories
\[
\mathcal C_{p_k}\simeq \mathcal D_{p_k};
\]
\item compatibility of the attachment functors \(\Phi_k,\Psi_k\) and \(\Phi_k',\Psi_k'\) up to natural isomorphism under these equivalences;
\item agreement of the corrected perverse shadows
\[
Sh(S_\Sigma)\cong Sh(T_\Sigma).
\]
\end{enumerate}
This is precisely the rigidity/equivalence framework established in \cite{RahmanMultiNodeSchoberPaper}.
\end{remark}

\begin{remark}
\label{rem:why-schober-equivalence-is-right}
The equivalence notion of Definition~\ref{def:equivalent-finite-node-schober-realizations} is the correct one for the present paper because the new interaction layer is extracted from schober functoriality. In particular, the incidence relation and its decategorified image are defined in terms of the attachment functors and their composites. Therefore intrinsicity must be formulated relative to equivalence of finite-node schober realizations rather than relative to a weaker purely set-theoretic identification of vertices.
\end{remark}

\subsection{Invariance of the functorial incidence package}

We now show that the functorial incidence package is invariant under equivalence.

\begin{proposition}[invariance of the direct bulk/localized relation]
\label{prop:invariance-bulk-relation}
Let \(S_\Sigma\) and \(T_\Sigma\) be equivalent finite-node schober realizations. Then their basic bulk/localized coupling relations
\[
\rightsquigarrow_{\mathrm{bulk}}^{\,S}
\qquad\text{and}\qquad
\rightsquigarrow_{\mathrm{bulk}}^{\,T}
\]
agree under the canonical identification of the common extended vertex set
\[
V_\Sigma^{\mathrm{ext}}:=V_\Sigma\sqcup\{v_{\mathrm{bulk}}\}.
\]
\end{proposition}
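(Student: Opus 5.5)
The plan is to reduce the statement to Lemma~\ref{lem:finiteness-basic-coupling}, applied separately to \(S_\Sigma\) and to \(T_\Sigma\). First I will fix the identification of vertex sets. Both packages are indexed by the same node set \(\Sigma=\{p_1,\dots,p_r\}\), so the localized vertices \(v_k\) are attached to \(\mathcal C_{p_k}\) on one side and to \(\mathcal D_{p_k}\) on the other. The bulk vertex is the formal symbol of Definition~\ref{def:bulk-vertex}, attached to \(\mathcal C_{\mathrm{bulk}}\) and \(\mathcal D_{\mathrm{bulk}}\) respectively. The equivalences \(\mathcal C_{p_k}\simeq\mathcal D_{p_k}\) and \(\mathcal C_{\mathrm{bulk}}\simeq\mathcal D_{\mathrm{bulk}}\) in items (1) and (2) of Remark~\ref{rem:equivalent-schober-realizations-unpacked} match vertex to vertex, \(v_k\mapsto v_k\) and \(v_{\mathrm{bulk}}\mapsto v_{\mathrm{bulk}}\). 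This gives the canonical identification of \(V_\Sigma^{\mathrm{ext}}\) for \(S_\Sigma\) with that for \(T_\Sigma\), and it is compatible with the decomposition of Proposition~\ref{prop:extended-vertex-decomposition}.

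Second, I will compute both relations explicitly. By Lemma~\ref{lem:finiteness-basic-coupling} applied to \(S_\Sigma\), the relation \(\rightsquigarrow_{\mathrm{bulk}}^{\,S}\) is exactly \(\{(v_k,v_{\mathrm{bulk}}),(v_{\mathrm{bulk}},v_k):1\le k\le r\}\). For \(T_\Sigma\), item (3) of Remark~\ref{rem:equivalent-schober-realizations-unpacked} guarantees that \(\Phi_k'\) and \(\Psi_k'\) exist for every \(k\), with source and target \(\mathcal D_{p_k}\) and \(\mathcal D_{\mathrm{bulk}}\), and that they correspond to \(\Phi_k\) and \(\Psi_k\) up to natural isomorphism. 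The same lemma then gives \(\rightsquigarrow_{\mathrm{bulk}}^{\,T}\) as the same set of \(2r\) pairs. Comparing the two sets under the identification from the first step proves the proposition.

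The main obstacle is checking that the definition of \(\rightsquigarrow_{\mathrm{bulk}}\) depends only on the source/target pattern of the attachment functors, so that replacing a functor by a naturally isomorphic one transported along an equivalence does not change whether a pair lies in the relation. I would settle this directly from Definition~\ref{def:basic-functorial-coupling-relation}. Each clause asks only that a functor \(\mathcal C_{p_k}\to\mathcal C_{\mathrm{bulk}}\), or \(\mathcal C_{\mathrm{bulk}}\to\mathcal C_{p_k}\), belong to the package. This is a property of the node index \(k\) and the direction of the functor, and equivalences of categories and natural isomorphisms of functors preserve both. The membership condition is therefore invariant, with no finer categorical input required.
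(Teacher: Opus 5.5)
Your proposal is correct and follows essentially the same route as the paper. The paper also argues that \(\rightsquigarrow_{\mathrm{bulk}}\) depends only on the existence of the attachment functors \(\Phi_k,\Psi_k\) (respectively \(\Phi_k',\Psi_k'\)), which the equivalence matches up to natural isomorphism, so the same \(2r\) pairs \((v_k,v_{\mathrm{bulk}})\), \((v_{\mathrm{bulk}},v_k)\) occur on both sides; your explicit appeal to Lemma~\ref{lem:finiteness-basic-coupling} for each package just makes visible that both relations are the full star on the commonly indexed vertex set, so the equivalence hypothesis is barely used beyond that common node indexing.
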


\begin{proof}
By Definition~\ref{def:basic-functorial-coupling-relation}, the relation \(\rightsquigarrow_{\mathrm{bulk}}\) is determined entirely by the existence of the attachment functors
\[
\Phi_k:\mathcal C_{p_k}\to \mathcal C_{\mathrm{bulk}},
\qquad
\Psi_k:\mathcal C_{\mathrm{bulk}}\to \mathcal C_{p_k},
\]
and similarly for the primed schober datum. Since \(S_\Sigma\) and \(T_\Sigma\) are equivalent finite-node schober realizations, their corresponding bulk and localized categories are equivalent and the attachment functors are compatible up to natural isomorphism; see \cite[Definition~3.5, Lemma~6.2, and Proposition~6.3]{RahmanMultiNodeSchoberPaper}. Therefore the same ordered pairs
\[
(v_k,v_{\mathrm{bulk}})
\qquad\text{and}\qquad
(v_{\mathrm{bulk}},v_k)
\]
occur in both bulk/localized relations. Hence the relations agree under the canonical identification of the vertex set.
\end{proof}

\begin{proposition}[invariance of the mediated relation]
\label{prop:invariance-mediated-relation}
Let \(S_\Sigma\) and \(T_\Sigma\) be equivalent finite-node schober realizations. Then their mediated coupling relations
\[
\rightsquigarrow_{\mathrm{med}}^{\,S}
\qquad\text{and}\qquad
\rightsquigarrow_{\mathrm{med}}^{\,T}
\]
agree under the canonical identification of the common localized vertex set
\[
V_\Sigma=\{v_1,\dots,v_r\}.
\]
\end{proposition}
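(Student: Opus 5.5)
The plan is to argue exactly parallel to Proposition~\ref{prop:invariance-bulk-relation}, using the explicit form of the mediated relation rather than any finer categorical input. By Lemma~\ref{lem:explicit-form-mediated-relation}, applied separately to each realization, one has
\[
\rightsquigarrow_{\mathrm{med}}^{\,S}=V_\Sigma\times V_\Sigma
\qquad\text{and}\qquad
\rightsquigarrow_{\mathrm{med}}^{\,T}=V_\Sigma\times V_\Sigma .
\]
Both $S_\Sigma$ and $T_\Sigma$ are indexed by the same node set $\Sigma=\{p_1,\dots,p_r\}$. Hence the localized vertex sets are literally the same set $V_\Sigma=\{v_1,\dots,v_r\}$, with $v_k$ attached to $\mathcal C_{p_k}$, respectively $\mathcal D_{p_k}$. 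The two relations therefore coincide as subsets of $V_\Sigma\times V_\Sigma$, and this already gives the statement.

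Since this argument uses only the fullness of the mediated relation, I would add a second, more structural step. Its purpose is to show that the identification is compatible with the functorial meaning of the pairs, so that the proof does not merely rest on a numerical coincidence. Fix the equivalences $F_{\mathrm{bulk}}:\mathcal C_{\mathrm{bulk}}\xrightarrow{\sim}\mathcal D_{\mathrm{bulk}}$ and $F_k:\mathcal C_{p_k}\xrightarrow{\sim}\mathcal D_{p_k}$ from Remark~\ref{rem:equivalent-schober-realizations-unpacked}, together with the natural isomorphisms
\[
F_{\mathrm{bulk}}\circ\Phi_i\cong\Phi_i'\circ F_i
\qquad\text{and}\qquad
F_j\circ\Psi_j\cong\Psi_j'\circ F_{\mathrm{bulk}} .
\]
Whiskering and pasting these gives, for each pair $(i,j)$, a natural isomorphism
\[
F_j\circ\Psi_j\circ\Phi_i\;\cong\;\Psi_j'\circ F_{\mathrm{bulk}}\circ\Phi_i\;\cong\;\Psi_j'\circ\Phi_i'\circ F_i .
\]
Consequently the composable channel $\Psi_j\circ\Phi_i$ of $S_\Sigma$ is transported to the composable channel $\Psi_j'\circ\Phi_i'$ of $T_\Sigma$, and it is attached to the same ordered pair $(v_i,v_j)$. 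This yields a bijection between mediated channels that respects the vertex labels. I would cite \cite[Definition~3.5, Lemma~6.2, and Proposition~6.3]{RahmanMultiNodeSchoberPaper} for the existence of these compatibility isomorphisms, exactly as in the previous proof.

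The only point requiring care is the meaning of ``canonical identification''. It must be the identity on node labels $v_k\mapsto v_k$, induced by the common node set. It must not be some permutation coming from an abstract equivalence of categories that might exchange sectors. I would handle this by noting that the equivalence of Definition~\ref{def:equivalent-finite-node-schober-realizations} is required nodewise, with $\mathcal C_{p_k}\simeq\mathcal D_{p_k}$ for each $k$. The induced map on vertices is therefore the identity, and the equality of the two full relations transfers without any reindexing. I do not expect any further obstacle.
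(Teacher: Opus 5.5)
Your proof is correct, but your main argument differs from the paper's. The paper never invokes Lemma~\ref{lem:explicit-form-mediated-relation} here. It argues only by transport: under the equivalence, the bulk and localized categories are replaced by equivalent ones, and $\Phi_i,\Psi_j$ are matched with $\Phi_i',\Psi_j'$ up to natural isomorphism. Hence ``the existence of the mediated composite'' from sector $i$ to sector $j$ is preserved, giving $v_i\rightsquigarrow_{\mathrm{med}}^{\,S}v_j\Leftrightarrow v_i\rightsquigarrow_{\mathrm{med}}^{\,T}v_j$.

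Your first step instead observes that both relations equal the full relation $V_\Sigma\times V_\Sigma$. This is shorter, and it shows that the equivalence hypothesis is not actually needed. Under the present support-level convention, any two finite-node schober packages on the same node set have the same mediated relation.

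Your second step is essentially the paper's argument made precise. You write down the pasted isomorphism $F_j\circ\Psi_j\circ\Phi_i\cong\Psi_j'\circ\Phi_i'\circ F_i$, whereas the paper only asserts that existence of composites is preserved, which is automatic anyway. This transport step is what would survive a later selective or weighted definition of the mediated relation, for instance a nonvanishing condition on $\Psi_j\circ\Phi_i$ or an invariant $\mathcal J$ as in Appendix~\ref{app:binary-decategorification}. In that setting the fullness argument no longer applies. Your pasted isomorphism would still show that any property of the composite invariant under natural isomorphism and under conjugation by equivalences is preserved. The vertex labels stay fixed by the nodewise equivalences $\mathcal C_{p_k}\simeq\mathcal D_{p_k}$, exactly as you noted.
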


\begin{proof}
By Definition~\ref{def:mediated-coupling-relation}, the relation \(\rightsquigarrow_{\mathrm{med}}\) is defined using the composites
\[
\Psi_j\circ\Phi_i:\mathcal C_{p_i}\to\mathcal C_{p_j}.
\]
Under an equivalence of finite-node schober realizations, the localized categories and bulk categories are replaced by equivalent categories, and the functors \(\Phi_i,\Psi_j\) are identified with the corresponding primed functors up to natural isomorphism; see \cite[Definition~3.5 and Proposition~6.3]{RahmanMultiNodeSchoberPaper}. Hence the existence of the mediated composite from the \(i\)-th localized sector to the \(j\)-th localized sector is preserved. Therefore
\[
v_i\rightsquigarrow_{\mathrm{med}}^{\,S} v_j
\quad\Longleftrightarrow\quad
v_i\rightsquigarrow_{\mathrm{med}}^{\,T} v_j,
\]
and the mediated relations agree under the canonical identification of \(V_\Sigma\).
\end{proof}

\begin{theorem}[invariance of the functorial incidence package]
\label{thm:invariance-functorial-incidence-package}
Let \(S_\Sigma\) and \(T_\Sigma\) be equivalent finite-node schober realizations. Then they determine the same functorial incidence package
\[
\mathfrak{I}_\Sigma:=\bigl(V_\Sigma^{\mathrm{ext}},\rightsquigarrow_\Sigma\bigr)
\]
up to the canonical identification induced by the common node set \(\Sigma\).
\end{theorem}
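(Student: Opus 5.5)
The plan is to reduce the theorem to the two componentwise invariance statements already proved, and then to check that the vertex components match. The functorial incidence package has two components: the extended vertex set \(V_\Sigma^{\mathrm{ext}}\) and the relation \(\rightsquigarrow_\Sigma\). I will treat each in turn.

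\textbf{Step 1: the vertex component.} Both \(S_\Sigma\) and \(T_\Sigma\) are indexed by the same node set \(\Sigma=\{p_1,\dots,p_r\}\). By Definition~\ref{def:extended-vertex-set} and Proposition~\ref{prop:extended-vertex-package-canonical}, each realization produces \(V_\Sigma=\{v_1,\dots,v_r\}\) with \(v_k\) attached to the \(k\)-th localized sector, together with one formal bulk vertex attached to its bulk category. The canonical identification sends \(v_k^S\mapsto v_k^T\) and \(v_{\mathrm{bulk}}^S\mapsto v_{\mathrm{bulk}}^T\). This is a bijection compatible with the decomposition of Proposition~\ref{prop:extended-vertex-decomposition}. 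The only input it uses is the common indexing by \(\Sigma\) and the bulk equivalence \(\mathcal C_{\mathrm{bulk}}\simeq\mathcal D_{\mathrm{bulk}}\) of Remark~\ref{rem:equivalent-schober-realizations-unpacked}.

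\textbf{Step 2: the relation component.} By Definition~\ref{def:total-functorial-coupling-relation} we have \(\rightsquigarrow_\Sigma=\rightsquigarrow_{\mathrm{bulk}}\cup\rightsquigarrow_{\mathrm{med}}\). Proposition~\ref{prop:invariance-bulk-relation} shows that the bulk parts agree under the identification of Step 1. Proposition~\ref{prop:invariance-mediated-relation} shows that the mediated parts agree under its restriction to \(V_\Sigma\). The natural embedding \(V_\Sigma\times V_\Sigma\hookrightarrow V_\Sigma^{\mathrm{ext}}\times V_\Sigma^{\mathrm{ext}}\) commutes with the identification of Step 1, because that identification preserves the localized part and fixes the bulk vertex. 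The image of a union is the union of the images, so \(\rightsquigarrow_\Sigma^S\) is carried exactly onto \(\rightsquigarrow_\Sigma^T\). As a consistency check, Lemmas~\ref{lem:finiteness-basic-coupling} and \ref{lem:explicit-form-mediated-relation} show that both relations equal the same explicit set: the \(2r\) star pairs together with \(V_\Sigma\times V_\Sigma\). Their agreement can therefore also be read off directly.

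\textbf{Main obstacle.} The step requiring the most care is that the identification used in the two invariance propositions is \emph{the same} bijection of \(V_\Sigma^{\mathrm{ext}}\). In particular, the equivalences \(\mathcal C_{p_k}\simeq\mathcal D_{p_k}\) must respect the node labels rather than permute them. I will address this by invoking the canonical node-indexing of localized sectors cited in Lemma~\ref{lem:well-defined-basic-coupling}: equivalence in the sense of Definition~\ref{def:equivalent-finite-node-schober-realizations} is taken nodewise for each \(p_k\). With that settled, the two relation components are transported along one fixed bijection, and the theorem follows.
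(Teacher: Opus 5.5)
Your proposal is correct and follows essentially the same route as the paper: decompose $\rightsquigarrow_\Sigma=\rightsquigarrow_{\mathrm{bulk}}\cup\rightsquigarrow_{\mathrm{med}}$, invoke Propositions~\ref{prop:invariance-bulk-relation} and \ref{prop:invariance-mediated-relation}, and conclude that the union agrees under the canonical identification of $V_\Sigma^{\mathrm{ext}}$. Your explicit vertex bijection, the nodewise-equivalence remark, and the observation that both relations equal the same explicit set (the $2r$ star pairs together with $V_\Sigma\times V_\Sigma$) only make precise what the paper's proof leaves implicit.
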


\begin{proof}
By Definition~\ref{def:total-functorial-coupling-relation},
\[
\rightsquigarrow_\Sigma\ :=\ \rightsquigarrow_{\mathrm{bulk}}\cup\rightsquigarrow_{\mathrm{med}}.
\]
By Proposition~\ref{prop:invariance-bulk-relation}, the bulk/localized relation is invariant under equivalence. By Proposition~\ref{prop:invariance-mediated-relation}, the mediated relation is invariant under equivalence. Therefore their union is invariant under equivalence, and the corresponding functorial incidence package
\[
\mathfrak{I}_\Sigma:=\bigl(V_\Sigma^{\mathrm{ext}},\rightsquigarrow_\Sigma\bigr)
\]
is the same for \(S_\Sigma\) and \(T_\Sigma\) up to the canonical identification of the vertex set.
\end{proof}

\subsection{Invariance of the decategorified incidence package}

We next pass from the relation object to its decategorified image.

\begin{proposition}[invariance of the binary incidence matrix]
\label{prop:invariance-binary-incidence-matrix}
Let \(S_\Sigma\) and \(T_\Sigma\) be equivalent finite-node schober realizations. Then the corresponding binary incidence matrices
\[
I_\Sigma^{S}
\qquad\text{and}\qquad
I_\Sigma^{T}
\]
agree up to simultaneous permutation of rows and columns induced by the common ordering convention on the extended vertex set.
\end{proposition}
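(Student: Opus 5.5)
The plan is to reduce the statement to Theorem~\ref{thm:invariance-functorial-incidence-package} and then pass through the characteristic-function construction of Definitions~\ref{def:binary-incidence-function} and \ref{def:binary-incidence-matrix}. First I will fix the canonical identification of extended vertex sets. Both realizations are indexed by the same node set $\Sigma=\{p_1,\dots,p_r\}$, so $v_k^S\leftrightarrow v_k^T$ for each $k$ and $v_{\mathrm{bulk}}^S\leftrightarrow v_{\mathrm{bulk}}^T$. This gives a bijection $\iota:V_\Sigma^{\mathrm{ext},S}\to V_\Sigma^{\mathrm{ext},T}$ that preserves the decomposition of Proposition~\ref{prop:extended-vertex-decomposition}.

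Next, by Theorem~\ref{thm:invariance-functorial-incidence-package}, $\iota$ carries $\rightsquigarrow_\Sigma^S$ onto $\rightsquigarrow_\Sigma^T$; that is, $u\rightsquigarrow_\Sigma^S v$ if and only if $\iota(u)\rightsquigarrow_\Sigma^T\iota(v)$. Since $\chi_\Sigma$ is the characteristic function of the relation (Definition~\ref{def:binary-incidence-function}, well defined by Lemma~\ref{lem:well-defined-binary-incidence-function}), this yields
\[
\chi_\Sigma^S(u,v)=\chi_\Sigma^T(\iota(u),\iota(v))
\qquad\text{for all }u,v\in V_\Sigma^{\mathrm{ext},S}.
\]

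Then I will compare orderings. Suppose the extended vertex sets are ordered by orderings $o_S$ and $o_T$ of $\{1,\dots,r+1\}$. Let $\sigma$ be the permutation relating $o_T\circ\iota$ to $o_S$, and $P_\sigma$ its permutation matrix. Entrywise, the identity above becomes $I_\Sigma^T=P_\sigma I_\Sigma^S P_\sigma^{-1}$, which is a simultaneous permutation of rows and columns. If both sides use the convention of Definition~\ref{def:binary-incidence-matrix}, with $v_{r+1}=v_{\mathrm{bulk}}$ and nodes ordered by index, then $\sigma$ is the identity and the two matrices are literally equal. This matches Remark~\ref{rem:indexing-convention-incidence}.

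As an independent check, Lemmas~\ref{lem:finiteness-basic-coupling} and \ref{lem:explicit-form-mediated-relation} give the matrices explicitly for either realization. The $V_\Sigma\times V_\Sigma$ block is all ones, the bulk row and bulk column are ones off the diagonal, and the $(\mathrm{bulk},\mathrm{bulk})$ entry is $0$. This form depends only on $r$, so the conclusion also follows directly.

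The main obstacle is justifying that the identification $\iota$ is canonical. It must send each localized sector $\mathcal C_{p_k}$ to $\mathcal D_{p_k}$ rather than to some other $\mathcal D_{p_j}$. I would get this from the nodewise form of the equivalence in Remark~\ref{rem:equivalent-schober-realizations-unpacked}(2), which gives $\mathcal C_{p_k}\simeq\mathcal D_{p_k}$ for each $k$, and from the common indexing of Remark~\ref{rem:partII-same-node-indexing}.
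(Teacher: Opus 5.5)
Your proposal is correct and follows the paper's route. You invoke Theorem~\ref{thm:invariance-functorial-incidence-package} to identify the two coupling relations, transfer this to the characteristic functions $\chi_\Sigma^S,\chi_\Sigma^T$, and absorb any difference in orderings into a simultaneous row--column permutation, which you make more explicit than the paper via $P_\sigma$. Your additional explicit-form check (every entry equal to $1$ except the $(v_{\mathrm{bulk}},v_{\mathrm{bulk}})$ entry) is valid but not used in the paper; it shows that, under the paper's definitions, the conclusion holds for any two realizations on the same node set without using the equivalence hypothesis at all.
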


\begin{proof}
By Definition~\ref{def:binary-incidence-function}, the binary incidence function is the characteristic function of the total coupling relation. By Theorem~\ref{thm:invariance-functorial-incidence-package}, the underlying relation is invariant under equivalence. Therefore the corresponding characteristic functions agree under the same identification. Passing to matrix form via Definition~\ref{def:binary-incidence-matrix} yields the same matrix up to simultaneous row-column permutation if different orderings of the extended vertex set are chosen.
\end{proof}

\begin{corollary}[invariance of the decategorified incidence package]
\label{cor:invariance-decategorified-incidence-package}
Let \(S_\Sigma\) and \(T_\Sigma\) be equivalent finite-node schober realizations. Then they determine the same decategorified incidence package
\[
\mathcal I_\Sigma:=\bigl(V_\Sigma^{\mathrm{ext}},I_\Sigma\bigr)
\]
up to the canonical identification induced by the common node set.
\end{corollary}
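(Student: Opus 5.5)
The plan is to deduce the corollary directly from Proposition~\ref{prop:invariance-binary-incidence-matrix} together with the observation that the vertex component of the package is the same for both realizations. Since \(\mathcal I_\Sigma=\bigl(V_\Sigma^{\mathrm{ext}},I_\Sigma\bigr)\) has exactly two components, I will check invariance of each separately and then combine them.

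\emph{Step 1 (vertex component).} Both \(S_\Sigma\) and \(T_\Sigma\) are indexed by the same node set \(\Sigma=\{p_1,\dots,p_r\}\), so they share the localized vertex set \(V_\Sigma=\{v_1,\dots,v_r\}\). Under the equivalence of Definition~\ref{def:equivalent-finite-node-schober-realizations}, the bulk categories \(\mathcal C_{\mathrm{bulk}}\simeq\mathcal D_{\mathrm{bulk}}\) are identified. Hence the formal bulk vertex \(v_{\mathrm{bulk}}\) of Definition~\ref{def:bulk-vertex} corresponds on both sides.

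By Proposition~\ref{prop:extended-vertex-package-canonical}, the extended vertex set \(V_\Sigma^{\mathrm{ext}}=V_\Sigma\sqcup\{v_{\mathrm{bulk}}\}\) is therefore canonically the same for \(S_\Sigma\) and \(T_\Sigma\). The identification sends \(v_k\mapsto v_k\) and \(v_{\mathrm{bulk}}\mapsto v_{\mathrm{bulk}}\). This is the canonical identification induced by the common node set referred to in the statement.

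\emph{Step 2 (matrix component).} Fix the same ordering \(\{v_1,\dots,v_r,v_{\mathrm{bulk}}\}\) of \(V_\Sigma^{\mathrm{ext}}\) on both sides, as in Definition~\ref{def:binary-incidence-matrix}. Theorem~\ref{thm:invariance-functorial-incidence-package} shows that \(\rightsquigarrow_\Sigma^{S}\) and \(\rightsquigarrow_\Sigma^{T}\) coincide under this identification. Their characteristic functions \(\chi_\Sigma\) therefore agree entrywise, so \(I_\Sigma^S=I_\Sigma^T\) exactly, which is Proposition~\ref{prop:invariance-binary-incidence-matrix} with trivial permutation. If different orderings were chosen, the two matrices would differ only by a simultaneous row–column permutation. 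By Remark~\ref{rem:indexing-convention-incidence}, such a permutation does not change the underlying package.

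\emph{Step 3 (conclusion).} Combining Steps 1 and 2, the pairs \(\bigl(V_\Sigma^{\mathrm{ext}},I_\Sigma^S\bigr)\) and \(\bigl(V_\Sigma^{\mathrm{ext}},I_\Sigma^T\bigr)\) agree under the canonical identification, which proves the corollary.

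The only point requiring care is Step 1: making precise that the bulk vertex is matched canonically. This rests on \(v_{\mathrm{bulk}}\) being a formal symbol attached to the bulk sector, not to a chosen bulk category. I would handle it by appealing to item (1) of Remark~\ref{rem:equivalent-schober-realizations-unpacked} and to Proposition~\ref{prop:extended-vertex-decomposition}, which fixes the decomposition \(V_\Sigma\sqcup\{v_{\mathrm{bulk}}\}\) independently of the particular realization.
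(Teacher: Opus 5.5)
Your proposal is correct and takes the paper's route: the paper derives the corollary immediately from Proposition~\ref{prop:invariance-binary-incidence-matrix}. Your Steps 1--3 add only an explicit check that the vertex component \(V_\Sigma^{\mathrm{ext}}\) is identified canonically, which the paper leaves implicit.
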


\begin{proof}
This follows immediately from Proposition~\ref{prop:invariance-binary-incidence-matrix}.
\end{proof}

\subsection{Invariance of the quiver-theoretic package}

We now combine the invariance results of the present section with the state-data intrinsicity established in \cite{RahmanQuiverDataI}.

Recall that \cite{RahmanQuiverDataI} proved the intrinsicity of the algebraic state-data package
\[
A_\Sigma:=\bigl(V_\Sigma,E_\Sigma,c_\Sigma\bigr)
\]
under the appropriate equivalence notion for the corrected finite-node extension package and its compatible realizations. The present paper added to \(A_\Sigma\) the interaction layer
\[
\bigl(\mathcal F_\Sigma,I_\Sigma\bigr),
\]
thereby forming the quiver-theoretic package
\[
\mathfrak Q_\Sigma:=\bigl(V_\Sigma,E_\Sigma,c_\Sigma,\mathcal F_\Sigma,I_\Sigma\bigr).
\]

\begin{theorem}[intrinsicity theorem]
\label{thm:intrinsicity-theorem-partII}
Let \(S_\Sigma\) and \(T_\Sigma\) be equivalent finite-node schober realizations. Then they determine the same finite-node quiver-theoretic package
\[
\mathfrak Q_\Sigma:=\bigl(V_\Sigma,E_\Sigma,c_\Sigma,\mathcal F_\Sigma,I_\Sigma\bigr)
\]
up to the canonical identifications induced by the common node set \(\Sigma\).
\end{theorem}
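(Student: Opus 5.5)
The proof will run componentwise through the five entries of $\mathfrak Q_\Sigma=(V_\Sigma,E_\Sigma,c_\Sigma,\mathcal F_\Sigma,I_\Sigma)$, splitting them into the state layer $(V_\Sigma,E_\Sigma,c_\Sigma)$ and the interaction layer $(\mathcal F_\Sigma,I_\Sigma)$.

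For the state layer, we use clause (4) of Remark~\ref{rem:equivalent-schober-realizations-unpacked}: equivalent realizations have $Sh(S_\Sigma)\cong Sh(T_\Sigma)$. Both shadows are identified with the corrected perverse object $\mathcal P$ by \eqref{eq:partII-shadow-is-P}. By Proposition~\ref{prop:compatibility-with-corrected-perverse-shadow} and the intrinsicity of $A_\Sigma$ established in \cite{RahmanQuiverDataI}, the triple $(V_\Sigma,E_\Sigma,c_\Sigma)$ depends only on $\mathcal P$ and the node set $\Sigma$. Hence the two realizations produce the same state data. The nodewise basis $\{e_k\}$ and the coordinates $c_k$ match through the common labelling $p_k\leftrightarrow v_k\leftrightarrow e_k\leftrightarrow c_k$ (Remark~\ref{rem:nodewise-basis-and-vertices}).

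For the incidence matrix, we invoke Corollary~\ref{cor:invariance-decategorified-incidence-package} directly. This gives $I_\Sigma^S=I_\Sigma^T$ once the same ordering $\{v_1,\dots,v_r,v_{\mathrm{bulk}}\}$ of $V_\Sigma^{\mathrm{ext}}$ is used. That ordering is fixed canonically by $\Sigma$ and the bulk vertex, so no permutation ambiguity remains.

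The main obstacle is the functorial component $\mathcal F_\Sigma$. Unlike the other entries, it consists of actual functors between different categories, so it cannot literally coincide for $S_\Sigma$ and $T_\Sigma$. I would therefore make precise what ``the same up to canonical identification'' means here. Given the equivalences $\beta:\mathcal C_{\mathrm{bulk}}\simeq\mathcal D_{\mathrm{bulk}}$ and $\alpha_k:\mathcal C_{p_k}\simeq\mathcal D_{p_k}$ from Remark~\ref{rem:equivalent-schober-realizations-unpacked}, clause (3) supplies natural isomorphisms
\[
\beta\circ\Phi_k\cong\Phi_k'\circ\alpha_k,\qquad \alpha_k\circ\Psi_k\cong\Psi_k'\circ\beta .
\]
I would declare $\mathcal F_\Sigma^S$ and $\mathcal F_\Sigma^T$ identified when such equivalences and isomorphisms exist, indexed node by node by the same $p_k$. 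This is exactly the equivalence relation of \cite[Definition~3.5]{RahmanMultiNodeSchoberPaper} restricted to the attachment data, so it holds by hypothesis. I would also note that the composites $\Psi_j\circ\Phi_i$ are then identified up to natural isomorphism, as in Proposition~\ref{prop:invariance-mediated-relation}. This keeps the identification of $\mathcal F_\Sigma$ consistent with the one already used for $I_\Sigma$.

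Assembling the three parts shows that all five components agree under the single identification induced by $\Sigma$.
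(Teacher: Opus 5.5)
Your proposal is correct and has the same componentwise structure as the paper's proof. The state layer \((V_\Sigma,E_\Sigma,c_\Sigma)\) comes from the intrinsicity theorem of the state-data paper, \(I_\Sigma\) from the invariance corollary for the decategorified incidence package, and \(\mathcal F_\Sigma\) from the definition of equivalence of schober data itself. Your extra steps are harmless elaborations: spelling out the natural isomorphisms \(\beta\circ\Phi_k\cong\Phi_k'\circ\alpha_k\), \(\alpha_k\circ\Psi_k\cong\Psi_k'\circ\beta\), and routing the state layer through \(Sh(S_\Sigma)\cong Sh(T_\Sigma)\). The shadow detour is unnecessary, though, because \(A_\Sigma\) is attached to the degeneration rather than to the chosen realization. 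On its own, an isomorphism of shadows would not fix the coordinates \(c_k\) without the extension structure.
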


\begin{proof}
By the intrinsicity theorem of \cite{RahmanQuiverDataI}, the state-data package
\[
A_\Sigma:=\bigl(V_\Sigma,E_\Sigma,c_\Sigma\bigr)
\]
is determined intrinsically once the finite-node corrected extension package and its compatible realizations have been fixed. By Corollary~\ref{cor:invariance-decategorified-incidence-package}, the decategorified incidence package
\[
\mathcal I_\Sigma:=\bigl(V_\Sigma^{\mathrm{ext}},I_\Sigma\bigr)
\]
is invariant under equivalence of finite-node schober realizations. The functorial coupling datum
\[
\mathcal F_\Sigma:=\{(\Phi_k,\Psi_k)\}_{k=1}^r
\]
is likewise invariant under the same equivalence notion by the very definition of equivalence of schober data. Therefore all five components of
\[
\mathfrak Q_\Sigma:=\bigl(V_\Sigma,E_\Sigma,c_\Sigma,\mathcal F_\Sigma,I_\Sigma\bigr)
\]
are invariant, and hence the entire package is intrinsic up to the canonical identifications induced by \(\Sigma\).
\end{proof}

\begin{corollary}[present output is intrinsic]
\label{cor:partII-output-intrinsic}
The finite-node quiver-theoretic package
\[
\mathfrak Q_\Sigma:=\bigl(V_\Sigma,E_\Sigma,c_\Sigma,\mathcal F_\Sigma,I_\Sigma\bigr)
\]
is intrinsic to the finite-node corrected extension package together with its compatible mixed-Hodge and schober realizations.
\end{corollary}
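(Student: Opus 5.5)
The corollary should follow by unpacking ``intrinsic'' as ``determined by, and invariant under equivalence of, the input data,'' and then checking both halves component by component.

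First I will establish determination. Theorem~\ref{thm:canonical-quiver-theoretic-package} already says that the corrected finite-node perverse extension, its mixed-Hodge-module refinement and its finite-node schober realization canonically determine
\[
\mathfrak Q_\Sigma=\bigl(V_\Sigma,E_\Sigma,c_\Sigma,\mathcal F_\Sigma,I_\Sigma\bigr).
\]
I will also cite Theorem~\ref{thm:compatibility-with-partI-data}. It records that the first three components are exactly $A_\Sigma$ and are indexed by the same node set. It also records that the package is compatible with the mixed-Hodge lift and with the shadow identification $Sh(S_\Sigma)\cong\mathcal P$. Hence every component is a function of the stated input, and no auxiliary choice enters. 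The ordering of $V_\Sigma^{\mathrm{ext}}$ is the one possible exception, but it only affects $I_\Sigma$ up to simultaneous row--column permutation (Remark~\ref{rem:indexing-convention-incidence}).

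Second, I will establish independence of the choice of realization. Suppose two compatible schober realizations $S_\Sigma$ and $T_\Sigma$ are attached to the same corrected extension package. By the rigidity framework of \cite{RahmanMultiNodeSchoberPaper} they are equivalent in the sense of Definition~\ref{def:equivalent-finite-node-schober-realizations}. Theorem~\ref{thm:intrinsicity-theorem-partII} then gives the same $\mathfrak Q_\Sigma$ up to the canonical identifications induced by $\Sigma$. On the state layer, the intrinsicity theorem of \cite{RahmanQuiverDataI} gives independence from the choice of mixed-Hodge lift within its equivalence class. Combining the two yields the corollary.

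The main obstacle is the second step: showing that any two compatible schober realizations of the same corrected package really are equivalent in the sense of Definition~\ref{def:equivalent-finite-node-schober-realizations}. Agreement of shadows alone need not force equivalence of the attachment functors. I would first try to quote the uniqueness/rigidity result of \cite{RahmanMultiNodeSchoberPaper} directly. If that is unavailable, I would weaken the conclusion so that it holds without it. The binary relation $\rightsquigarrow_\Sigma$, and hence $I_\Sigma$, is independent of the functors anyway, since it is always the star relation together with $V_\Sigma\times V_\Sigma$ (Lemmas~\ref{lem:finiteness-basic-coupling} and~\ref{lem:explicit-form-mediated-relation}). So $\mathcal F_\Sigma$ would then be intrinsic only up to the equivalence of realizations, while $(V_\Sigma,E_\Sigma,c_\Sigma,I_\Sigma)$ would be strictly intrinsic.
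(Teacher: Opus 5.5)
Your proposal is correct, and its core is the paper's argument. The paper's proof is the single sentence that the corollary follows immediately from Theorem~\ref{thm:intrinsicity-theorem-partII}. Remark~\ref{rem:intrinsicity-meaning-partII} then fixes what ``intrinsic'' means: once the corrected extension and its mixed-Hodge and schober realizations are fixed up to schober equivalence, $\mathfrak Q_\Sigma$ is determined. Your extra ``determination'' half, via Theorems~\ref{thm:canonical-quiver-theoretic-package} and~\ref{thm:compatibility-with-partI-data}, only makes explicit what that theorem already presupposes.

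Where you differ is that you read the corollary more strongly, as independence from the \emph{choice} of compatible schober realization. That reading needs a rigidity statement: any two compatible realizations with shadow $\mathcal P$ are equivalent in the sense of Definition~\ref{def:equivalent-finite-node-schober-realizations}. This paper never proves that. It only alludes to a rigidity/equivalence framework in Remark~\ref{rem:equivalent-schober-realizations-unpacked}, and you are right that agreement of shadows alone does not obviously force it. Under the paper's reading the realization is part of the input, so your ``main obstacle'' does not arise, and your fallback conclusion is exactly the paper's statement.

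The fallback does contain a sharper observation than the paper uses. By Lemmas~\ref{lem:finiteness-basic-coupling} and~\ref{lem:explicit-form-mediated-relation}, $I_\Sigma$ is always the framed matrix \eqref{eq:general-framed-incidence-matrix} with every $\chi_{ij}=1$. So $I_\Sigma$ depends only on $r$, and its invariance needs no appeal to natural isomorphisms. The paper instead routes it through Propositions~\ref{prop:invariance-bulk-relation} and~\ref{prop:invariance-mediated-relation}.

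Your route therefore shows that $(V_\Sigma,E_\Sigma,c_\Sigma,I_\Sigma)$ is choice-free up to ordering. It also isolates $\mathcal F_\Sigma$ as the only component whose intrinsicity is genuinely ``up to equivalence,'' which the paper's one-line proof leaves implicit.
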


\begin{proof}
This follows immediately from Theorem~\ref{thm:intrinsicity-theorem-partII}.
\end{proof}

\begin{remark}
\label{rem:intrinsicity-meaning-partII}
Corollary~\ref{cor:partII-output-intrinsic} should be read in the same precise sense as the intrinsicity statement of \cite{RahmanQuiverDataI}. The claim is not that the package \(\mathfrak Q_\Sigma\) exists independently of the corrected extension and its realizations. Rather, it says that once the finite-node architecture has been fixed up to the correct notion of equivalence, the extracted interaction/incidence package is determined.
\end{remark}

\subsection{Consequences for future work}

The role of the present section is to show that the interaction layer extracted in the present work is canonical and stable under the correct equivalence notion. This is the last structural result needed before passing to the later dynamical theory.

\begin{remark}
\label{rem:why-invariance-matters-for-partIII}
The intrinsicity theorem of the present work is essential for the next paper in the sequence. Stability data, BPS sectors, and wall-crossing formulas should depend only on the intrinsic finite-node interaction package and not on auxiliary choices of categorical presentation. Theorem~\ref{thm:intrinsicity-theorem-partII} ensures that the quiver-theoretic package
\[
\mathfrak Q_\Sigma:=\bigl(V_\Sigma,E_\Sigma,c_\Sigma,\mathcal F_\Sigma,I_\Sigma\bigr)
\]
is the correct intrinsic input for that later theory.
\end{remark}

\section{Examples}\label{sec:examples}

The purpose of this section is to illustrate the interaction layer constructed in the present paper in the first nontrivial finite-node cases. In each example the input consists of the state-data package
\[
A_\Sigma:=\bigl(V_\Sigma,E_\Sigma,c_\Sigma\bigr)
\]
from \cite{RahmanQuiverDataI}, the finite-node schober package
\[
S_\Sigma:=\Bigl(\mathcal C_{\mathrm{bulk}},\{\mathcal C_{p_k}\}_{k=1}^r,\{\Phi_k,\Psi_k\}_{k=1}^r,Sh(S_\Sigma)\Bigr),
\]
and the functorial coupling datum
\[
\mathcal F_\Sigma:=\{(\Phi_k,\Psi_k)\}_{k=1}^r.
\]
The output is the functorial incidence package
\[
\mathfrak I_\Sigma:=\bigl(V_\Sigma^{\mathrm{ext}},\rightsquigarrow_\Sigma\bigr),
\]
the decategorified incidence package
\[
\mathcal I_\Sigma:=\bigl(V_\Sigma^{\mathrm{ext}},I_\Sigma\bigr),
\]
and the quiver-theoretic package
\[
\mathfrak Q_\Sigma:=\bigl(V_\Sigma,E_\Sigma,c_\Sigma,\mathcal F_\Sigma,I_\Sigma\bigr).
\]

The examples below should be read as explicit instances of the passage
\[
A_\Sigma
\;\longrightarrow\;
\mathcal F_\Sigma
\;\longrightarrow\;
\mathfrak I_\Sigma
\;\longrightarrow\;
\mathcal I_\Sigma
\;\longrightarrow\;
\mathfrak Q_\Sigma.
\]

\begin{remark}
\label{rem:examples-binary-choice}
The examples in this section are computed at the level of the binary incidence formalism of Section~\ref{sec:decategorified-incidence-data}. Accordingly, the localized-localized block of the incidence matrix records only whether a bulk-mediated coupling channel is present, not any weighted multiplicity or stronger numerical interaction invariant. In particular, under the mediated-relation convention adopted in Section~\ref{sec:functorial-coupling-relation}, the resulting localized-localized block is the support matrix of that relation. For a technical discussion of this choice and of the conditions required for a later weighted refinement, see Appendix~\ref{app:binary-decategorification}.
\end{remark}

As in \cite{RahmanQuiverDataI}, the purpose of the examples is not to invent numerical interaction weights that have not been proved, but to make the extraction rule completely explicit in the smallest finite-node cases.

\subsection{Single-node case: the framed star package}

We begin with the case $r=1$. Let
\[
\Sigma=\{p\}
\]
be a single ODP on the central fiber. Then the state-data package of \cite{RahmanQuiverDataI} has the form
\[
A_\Sigma=\bigl(\{v\},\Q e,(c)\bigr).
\]
Here the localized quotient is
\[
Q_\Sigma=i_*\Q_{\{p\}},
\]
the nodewise coupling space is
\[
E_\Sigma=\Ext^1_{\Perv(X_0;\Q)}(i_*\Q_{\{p\}},IC_{X_0})\cong \Q e,
\]
and the corrected global class satisfies
\[
[\mathcal P]_{\mathrm{perv}}=c\,e
\]
for a unique $c\in\Q$; see \cite{RahmanQuiverDataI,RahmanMixedHodgeModules}.

On the schober side, the finite-node schober package reduces to
\[
S_\Sigma=\bigl(\mathcal C_{\mathrm{bulk}},\mathcal C_p,\Phi,\Psi,Sh(S_\Sigma)\bigr),
\qquad
Sh(S_\Sigma)\cong \mathcal P;
\]
see \cite{RahmanMultiNodeSchoberPaper}. The extended vertex set is therefore
\[
V_\Sigma^{\mathrm{ext}}:=\{v,v_{\mathrm{bulk}}\}.
\]
The functorial coupling datum is
\[
\mathcal F_\Sigma:=\{(\Phi,\Psi)\},
\]
with
\[
\Phi:\mathcal C_p\to \mathcal C_{\mathrm{bulk}},
\qquad
\Psi:\mathcal C_{\mathrm{bulk}}\to \mathcal C_p.
\]

By Definition~\ref{def:basic-functorial-coupling-relation}, the direct bulk/localized coupling relation is
\[
v\rightsquigarrow_{\mathrm{bulk}} v_{\mathrm{bulk}},
\qquad
v_{\mathrm{bulk}}\rightsquigarrow_{\mathrm{bulk}} v.
\]
By Definition~\ref{def:mediated-coupling-relation}, the mediated relation on $V_\Sigma=\{v\}$ is determined by the composite
\[
\Psi\circ \Phi:\mathcal C_p\to \mathcal C_p.
\]
Thus there is also the localized self-coupling relation
\[
v\rightsquigarrow_{\mathrm{med}} v.
\]
Accordingly, the total functorial coupling relation is
\[
\rightsquigarrow_\Sigma
=
\{
(v,v_{\mathrm{bulk}}),
(v_{\mathrm{bulk}},v),
(v,v)
\}.
\]

The corresponding binary incidence matrix depends on the ordering convention. If one orders the extended vertex set as
\[
V_\Sigma^{\mathrm{ext}}:=\{v,v_{\mathrm{bulk}}\},
\]
then the binary incidence matrix is
\begin{equation}\label{eq:single-node-incidence-matrix}
I_\Sigma=
\begin{pmatrix}
1 & 1\\
1 & 0
\end{pmatrix}.
\end{equation}
Indeed:
\begin{enumerate}
\item the $(1,1)$-entry is $1$ because $v\rightsquigarrow_{\mathrm{med}} v$;
\item the $(1,2)$-entry is $1$ because $v\rightsquigarrow_{\mathrm{bulk}} v_{\mathrm{bulk}}$;
\item the $(2,1)$-entry is $1$ because $v_{\mathrm{bulk}}\rightsquigarrow_{\mathrm{bulk}} v$;
\item the $(2,2)$-entry is $0$ because the construction does not impose a bulk self-coupling relation.
\end{enumerate}

Thus the decategorified incidence package is
\[
\mathcal I_\Sigma=
\left(
\{v,v_{\mathrm{bulk}}\},
\begin{pmatrix}
1 & 1\\
1 & 0
\end{pmatrix}
\right),
\]
and the corresponding quiver-theoretic package is
\begin{equation}\label{eq:single-node-quiver-package}
\mathfrak Q_\Sigma=
\Bigl(
\{v\},
\Q e,
(c),
\{(\Phi,\Psi)\},
\begin{pmatrix}
1 & 1\\
1 & 0
\end{pmatrix}
\Bigr).
\end{equation}

\begin{proposition}\label{prop:single-node-incidence-example}
Let $\pi:X\to\Delta$ be a finite-node conifold degeneration with a single node $\Sigma=\{p\}$. Then the finite-node schober package determines a functorial incidence package
\[
\mathfrak I_\Sigma=\bigl(\{v,v_{\mathrm{bulk}}\},\rightsquigarrow_\Sigma\bigr)
\]
whose relation consists exactly of the three ordered pairs
\[
(v,v_{\mathrm{bulk}}),\qquad
(v_{\mathrm{bulk}},v),\qquad
(v,v).
\]
Consequently, after ordering the extended vertex set as $\{v,v_{\mathrm{bulk}}\}$, the binary incidence matrix is
\[
I_\Sigma=
\begin{pmatrix}
1 & 1\\
1 & 0
\end{pmatrix},
\]
and the quiver-theoretic package is given by \eqref{eq:single-node-quiver-package}.
\end{proposition}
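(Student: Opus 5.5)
The plan is to specialize the general results of Sections~\ref{sec:functorial-coupling-relation}--\ref{sec:quiver-theoretic-package} to $r=1$ and read off each component directly. First, by Lemma~\ref{lem:finiteness-extended-vertex-set} and Definition~\ref{def:extended-vertex-set}, the extended vertex set is $V_\Sigma^{\mathrm{ext}}=\{v,v_{\mathrm{bulk}}\}$, with two elements. Next, Lemma~\ref{lem:finiteness-basic-coupling} with $r=1$ gives $\rightsquigarrow_{\mathrm{bulk}}=\{(v,v_{\mathrm{bulk}}),(v_{\mathrm{bulk}},v)\}$, exactly $2r=2$ pairs. Lemma~\ref{lem:explicit-form-mediated-relation} gives $\rightsquigarrow_{\mathrm{med}}=V_\Sigma\times V_\Sigma=\{(v,v)\}$, realized by the composite $\Psi\circ\Phi:\mathcal C_p\to\mathcal C_p$. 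Taking the union as in Definition~\ref{def:total-functorial-coupling-relation} yields the three-element relation asserted in the statement.

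The one point that needs an actual argument is that no further pairs occur; in particular, $(v_{\mathrm{bulk}},v_{\mathrm{bulk}})\notin\rightsquigarrow_\Sigma$. I would obtain this from Proposition~\ref{prop:decomposition-incidence-package}. That result says the bulk part consists only of pairs mixing $v$ and $v_{\mathrm{bulk}}$, by the explicit list in Lemma~\ref{lem:finiteness-basic-coupling}. It also says the mediated part is supported on $V_\Sigma\times V_\Sigma$. Hence the pair $(v_{\mathrm{bulk}},v_{\mathrm{bulk}})$ lies in neither component. The composite $\Phi\circ\Psi$ on $\mathcal C_{\mathrm{bulk}}$ is not one of the defining clauses, so it contributes nothing. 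Since $(V_\Sigma^{\mathrm{ext}})^2$ has only four pairs, the relation is exactly the three listed. This exclusion is the main obstacle: it has to be argued from the definitions rather than from the existence of functors.

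Finally, I would apply Definitions~\ref{def:binary-incidence-function} and~\ref{def:binary-incidence-matrix} with the ordering $v_1=v$, $v_2=v_{\mathrm{bulk}}$. This gives $\chi_{11}=\chi_{12}=\chi_{21}=1$ and $\chi_{22}=0$, which is the displayed matrix. Substituting it, together with $A_\Sigma=(\{v\},\Q e,(c))$ from \cite{RahmanQuiverDataI} and $\mathcal F_\Sigma=\{(\Phi,\Psi)\}$, into Definition~\ref{def:finite-node-quiver-theoretic-package} gives \eqref{eq:single-node-quiver-package}. Theorem~\ref{thm:canonical-quiver-theoretic-package} then guarantees that this package is canonical.
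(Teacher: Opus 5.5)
Your proposal is correct and follows essentially the same route as the paper: the two bulk pairs come from $\Phi$ and $\Psi$, the self-pair $(v,v)$ comes from the composite $\Psi\circ\Phi$, the pair $(v_{\mathrm{bulk}},v_{\mathrm{bulk}})$ is excluded, and the matrix is the characteristic function of the resulting relation. Your exclusion argument uses the explicit $2r$-pair list of Lemma~\ref{lem:finiteness-basic-coupling}, which is needed because ``supported on pairs involving $v_{\mathrm{bulk}}$'' alone would not rule out the bulk self-pair; this is simply a more careful version of the paper's remark that the construction imposes no bulk self-coupling.
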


\begin{proof}
The direct bulk/localized pairs are forced by the attachment functors $\Phi$ and $\Psi$. The mediated self-coupling is forced by the composite $\Psi\circ\Phi:\mathcal C_p\to \mathcal C_p$. No other ordered pairs can occur, since the extended vertex set has only two vertices and the construction imposes no bulk self-coupling. The stated matrix follows by taking the characteristic function of this finite relation.
\end{proof}

\begin{remark}
\label{rem:single-node-framed-star}
The single-node case is the smallest framed incidence configuration. It already shows the basic structural features of the present work: the bulk vertex is essential, the localized sector couples directly to the bulk, and the bulk mediates a self-interaction on the localized vertex.
\end{remark}

\subsection{Two-node case: first mediated coupling pattern}

We now pass to the first genuinely nontrivial multi-node case. Let
\[
\Sigma=\{p_1,p_2\}.
\]
Then the state-data package is
\[
A_\Sigma=
\bigl(
\{v_1,v_2\},
\Q e_1\oplus\Q e_2,
(c_1,c_2)
\bigr);
\]
see \cite{RahmanQuiverDataI}. On the schober side, the finite-node package has the form
\[
S_\Sigma=
\Bigl(
\mathcal C_{\mathrm{bulk}},
\{\mathcal C_{p_1},\mathcal C_{p_2}\},
\{(\Phi_1,\Psi_1),(\Phi_2,\Psi_2)\},
Sh(S_\Sigma)
\Bigr),
\qquad
Sh(S_\Sigma)\cong \mathcal P,
\]
with
\[
\Phi_i:\mathcal C_{p_i}\to \mathcal C_{\mathrm{bulk}},
\qquad
\Psi_i:\mathcal C_{\mathrm{bulk}}\to \mathcal C_{p_i},
\qquad
i=1,2.
\]

The extended vertex set is
\[
V_\Sigma^{\mathrm{ext}}:=\{v_1,v_2,v_{\mathrm{bulk}}\}.
\]
The direct bulk/localized relation therefore contains the four pairs
\[
(v_1,v_{\mathrm{bulk}}),\qquad
(v_{\mathrm{bulk}},v_1),\qquad
(v_2,v_{\mathrm{bulk}}),\qquad
(v_{\mathrm{bulk}},v_2).
\]

The mediated relation is determined by the four composites
\[
\Psi_1\circ\Phi_1:\mathcal C_{p_1}\to \mathcal C_{p_1},
\qquad
\Psi_2\circ\Phi_1:\mathcal C_{p_1}\to \mathcal C_{p_2},
\]
\[
\Psi_1\circ\Phi_2:\mathcal C_{p_2}\to \mathcal C_{p_1},
\qquad
\Psi_2\circ\Phi_2:\mathcal C_{p_2}\to \mathcal C_{p_2}.
\]
Hence the mediated relation on $V_\Sigma=\{v_1,v_2\}$ is determined by the ordered pairs
\[
(v_i,v_j),
\qquad
1\le i,j\le 2,
\]
for which the corresponding composite \(\Psi_j\circ\Phi_i\) is part of the schober-induced coupling structure. Under the mediated-relation convention of Section~\ref{sec:functorial-coupling-relation}, all four such pairs occur.

Accordingly, the total coupling relation is
\[
\rightsquigarrow_\Sigma
=
\rightsquigarrow_{\mathrm{bulk}}\cup\rightsquigarrow_{\mathrm{med}}
\subseteq
\{v_1,v_2,v_{\mathrm{bulk}}\}\times \{v_1,v_2,v_{\mathrm{bulk}}\}.
\]
If one orders the extended vertex set as
\[
\{v_1,v_2,v_{\mathrm{bulk}}\},
\]
then the binary incidence matrix is
\begin{equation}\label{eq:two-node-incidence-matrix-general}
I_\Sigma=
\begin{pmatrix}
1 & 1 & 1\\
1 & 1 & 1\\
1 & 1 & 0
\end{pmatrix}.
\end{equation}
The last column and last row encode the direct bulk couplings, while the upper-left $2\times 2$ block encodes the mediated node-to-node interaction pattern.

Thus the decategorified incidence package is
\[
\mathcal I_\Sigma=
\left(
\{v_1,v_2,v_{\mathrm{bulk}}\},
\begin{pmatrix}
1 & 1 & 1\\
1 & 1 & 1\\
1 & 1 & 0
\end{pmatrix}
\right),
\]
and the quiver-theoretic package is
\begin{equation}\label{eq:two-node-quiver-package}
\mathfrak Q_\Sigma=
\left(
\{v_1,v_2\},
\Q e_1\oplus\Q e_2,
(c_1,c_2),
\{(\Phi_1,\Psi_1),(\Phi_2,\Psi_2)\},
\begin{pmatrix}
1 & 1 & 1\\
1 & 1 & 1\\
1 & 1 & 0
\end{pmatrix}
\right).
\end{equation}

\begin{proposition}\label{prop:two-node-incidence-example}
Let $\pi:X\to\Delta$ be a finite-node conifold degeneration with node set $\Sigma=\{p_1,p_2\}$. Then the associated functorial incidence package has extended vertex set
\[
V_\Sigma^{\mathrm{ext}}=\{v_1,v_2,v_{\mathrm{bulk}}\}
\]
and binary incidence matrix of the form \eqref{eq:two-node-incidence-matrix-general}. In particular:
\begin{enumerate}
\item the bulk row and bulk column are fixed by the attachment functors and contain the direct coupling pattern
\[
(v_1,v_{\mathrm{bulk}}),\quad
(v_2,v_{\mathrm{bulk}}),\quad
(v_{\mathrm{bulk}},v_1),\quad
(v_{\mathrm{bulk}},v_2);
\]
\item the upper-left $2\times 2$ block records the mediated node-to-node coupling structure induced by the composites \(\Psi_j\circ\Phi_i\).
\end{enumerate}
\end{proposition}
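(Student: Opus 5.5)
The plan is to specialize the general constructions of Sections~\ref{sec:extended-vertex-data}--\ref{sec:decategorified-incidence-data} to $r=2$ and read off the matrix entry by entry, in the same way as Proposition~\ref{prop:single-node-incidence-example}.

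\textbf{Vertex set.} By Definition~\ref{def:extended-vertex-set} and Lemma~\ref{lem:finiteness-extended-vertex-set}, the extended vertex set is $V_\Sigma^{\mathrm{ext}}=\{v_1,v_2,v_{\mathrm{bulk}}\}$, of cardinality $3$. Fixing the ordering with $v_3:=v_{\mathrm{bulk}}$ as in Definition~\ref{def:binary-incidence-matrix} makes $I_\Sigma$ a $3\times 3$ matrix.

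\textbf{Bulk row and column.} Lemma~\ref{lem:finiteness-basic-coupling} shows that $\rightsquigarrow_{\mathrm{bulk}}$ consists of exactly the $2r=4$ pairs listed in item (1). Lemma~\ref{lem:bulk-localized-restriction-incidence} shows that these are precisely the pairs of $\rightsquigarrow_\Sigma$ involving $v_{\mathrm{bulk}}$. This has three consequences:
\begin{enumerate}
\item the entries $(1,3)$, $(2,3)$, $(3,1)$ and $(3,2)$ equal $1$;
\item the entry $(3,3)$ equals $0$, because $(v_{\mathrm{bulk}},v_{\mathrm{bulk}})$ belongs neither to $\rightsquigarrow_{\mathrm{bulk}}$ nor to $\rightsquigarrow_{\mathrm{med}}\subseteq V_\Sigma\times V_\Sigma$;
\item item (1) of the statement follows.
\end{enumerate}

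\textbf{Localized block.} Lemma~\ref{lem:localized-restriction-incidence} identifies the restriction of $\rightsquigarrow_\Sigma$ to $V_\Sigma\times V_\Sigma$ with $\rightsquigarrow_{\mathrm{med}}$. Lemma~\ref{lem:explicit-form-mediated-relation} identifies $\rightsquigarrow_{\mathrm{med}}$ with the full relation $V_\Sigma\times V_\Sigma$. Each pair $(v_i,v_j)$ is realized by the composite $\Psi_j\circ\Phi_i$, so all four entries of the upper-left $2\times 2$ block equal $1$. Proposition~\ref{prop:support-of-incidence-matrix} then gives item (2).

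\textbf{Main obstacle.} The only delicate point is interpretive rather than technical. Item (2) should be read as recording support via composability, consistent with Remark~\ref{rem:mediated-relation-maximal-support}, and not as a nonvanishing criterion on the composite functors. I will state explicitly that under the convention of Definition~\ref{def:mediated-coupling-relation} the block is forced to be all ones.

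Finally, Remark~\ref{rem:indexing-convention-incidence} shows that the matrix depends on the ordering only up to simultaneous permutation, so the claimed form \eqref{eq:two-node-incidence-matrix-general} holds for the stated ordering.
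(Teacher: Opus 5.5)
Your proposal is correct and follows essentially the same route as the paper: specialize the definitions of $\rightsquigarrow_{\mathrm{bulk}}$, $\rightsquigarrow_{\mathrm{med}}$ and the characteristic matrix to $r=2$, then read off the entries. Your version is somewhat more complete than the paper's three-line argument, since you explicitly justify the $0$ in the $(v_{\mathrm{bulk}},v_{\mathrm{bulk}})$ slot and invoke Lemma~\ref{lem:explicit-form-mediated-relation} to force the all-ones localized block, both of which the paper's proof leaves implicit.
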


\begin{proof}
The direct bulk/localized entries follow from the definition of the basic functorial coupling relation. The localized-localized entries are by definition the characteristic values of the mediated relation. Since the extended vertex set has three elements, the matrix has the stated $3\times 3$ block form.
\end{proof}

\begin{remark}
\label{rem:two-node-first-nontrivial}
The two-node case is the first genuinely interaction-theoretic example. In the single-node case there is only bulk coupling and mediated self-coupling. In the two-node case one can, for the first time, distinguish self-couplings from mediated off-diagonal couplings between distinct localized sectors.
\end{remark}

\subsection{Three-node case: finite incidence pattern}

We now consider the case
\[
\Sigma=\{p_1,p_2,p_3\}.
\]
Then the state-data package is
\[
A_\Sigma=
\bigl(
\{v_1,v_2,v_3\},
\Q e_1\oplus\Q e_2\oplus\Q e_3,
(c_1,c_2,c_3)
\bigr),
\]
and the finite-node schober package is
\[
S_\Sigma=
\Bigl(
\mathcal C_{\mathrm{bulk}},
\{\mathcal C_{p_1},\mathcal C_{p_2},\mathcal C_{p_3}\},
\{(\Phi_1,\Psi_1),(\Phi_2,\Psi_2),(\Phi_3,\Psi_3)\},
Sh(S_\Sigma)
\Bigr),
\qquad
Sh(S_\Sigma)\cong \mathcal P.
\]

The extended vertex set is
\[
V_\Sigma^{\mathrm{ext}}:=\{v_1,v_2,v_3,v_{\mathrm{bulk}}\}.
\]
The direct bulk/localized coupling relation contributes the six ordered pairs
\[
(v_i,v_{\mathrm{bulk}}),
\qquad
(v_{\mathrm{bulk}},v_i),
\qquad
1\le i\le 3.
\]
The mediated relation is determined by the nine composites
\[
\Psi_j\circ\Phi_i:\mathcal C_{p_i}\to \mathcal C_{p_j},
\qquad
1\le i,j\le 3.
\]
Accordingly, after ordering the extended vertex set as
\[
\{v_1,v_2,v_3,v_{\mathrm{bulk}}\},
\]
the binary incidence matrix takes the form
\begin{equation}\label{eq:three-node-incidence-matrix-general}
I_\Sigma=
\begin{pmatrix}
1 & 1 & 1 & 1\\
1 & 1 & 1 & 1\\
1 & 1 & 1 & 1\\
1 & 1 & 1 & 0
\end{pmatrix}.
\end{equation}
The upper-left $3\times 3$ block is the binary decategorified image of the mediated node-to-node coupling structure, while the last row and last column encode the framing by the bulk sector.

Thus the decategorified incidence package is
\[
\mathcal I_\Sigma=
\left(
\{v_1,v_2,v_3,v_{\mathrm{bulk}}\},
\begin{pmatrix}
1 & 1 & 1 & 1\\
1 & 1 & 1 & 1\\
1 & 1 & 1 & 1\\
1 & 1 & 1 & 0
\end{pmatrix}
\right),
\]
and the quiver-theoretic package is
\begin{equation}\label{eq:three-node-quiver-package}
\mathfrak Q_\Sigma=
\left(
\{v_1,v_2,v_3\},
\Q e_1\oplus\Q e_2\oplus\Q e_3,
(c_1,c_2,c_3),
\{(\Phi_1,\Psi_1),(\Phi_2,\Psi_2),(\Phi_3,\Psi_3)\},
\begin{pmatrix}
1 & 1 & 1 & 1\\
1 & 1 & 1 & 1\\
1 & 1 & 1 & 1\\
1 & 1 & 1 & 0
\end{pmatrix}
\right).
\end{equation}

\begin{proposition}\label{prop:three-node-incidence-example}
Let $\pi:X\to\Delta$ be a finite-node conifold degeneration with node set $\Sigma=\{p_1,p_2,p_3\}$. Then the associated functorial incidence package has extended vertex set
\[
V_\Sigma^{\mathrm{ext}}=\{v_1,v_2,v_3,v_{\mathrm{bulk}}\}
\]
and binary incidence matrix of the form \eqref{eq:three-node-incidence-matrix-general}. In particular:
\begin{enumerate}
\item the last row and last column encode the direct bulk/localized coupling pattern;
\item the upper-left $3\times 3$ block records the full mediated binary coupling pattern among the three localized sectors.
\end{enumerate}
\end{proposition}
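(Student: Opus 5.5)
The plan is to compute the relation $\rightsquigarrow_\Sigma$ directly for $r=3$ and then read off its characteristic matrix, exactly as in the proofs of Propositions~\ref{prop:single-node-incidence-example} and \ref{prop:two-node-incidence-example}. First, by Definition~\ref{def:extended-vertex-set} and Lemma~\ref{lem:finiteness-extended-vertex-set}, the extended vertex set is $V_\Sigma^{\mathrm{ext}}=\{v_1,v_2,v_3,v_{\mathrm{bulk}}\}$ with four elements, so $I_\Sigma$ is a $4\times 4$ matrix once the ordering $\{v_1,v_2,v_3,v_{\mathrm{bulk}}\}$ of Definition~\ref{def:binary-incidence-matrix} is fixed.

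Next I would treat the two layers separately, using the decomposition of Proposition~\ref{prop:decomposition-incidence-package}. For the bulk layer, Lemma~\ref{lem:finiteness-basic-coupling} with $r=3$ gives exactly the six pairs $(v_i,v_{\mathrm{bulk}})$ and $(v_{\mathrm{bulk}},v_i)$ for $1\le i\le 3$. By Lemma~\ref{lem:bulk-localized-restriction-incidence}, these are the only pairs of $\rightsquigarrow_\Sigma$ involving $v_{\mathrm{bulk}}$. In particular, $(v_{\mathrm{bulk}},v_{\mathrm{bulk}})\notin\rightsquigarrow_\Sigma$. This fills the last row and last column with $1$'s except for a $0$ in the $(4,4)$ entry, which proves item (1).

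For the mediated layer, Lemma~\ref{lem:explicit-form-mediated-relation} gives $\rightsquigarrow_{\mathrm{med}}=V_\Sigma\times V_\Sigma$, witnessed by the nine composites $\Psi_j\circ\Phi_i$. Lemma~\ref{lem:localized-restriction-incidence} identifies this with the restriction of $\rightsquigarrow_\Sigma$ to $V_\Sigma\times V_\Sigma$. Hence the upper-left $3\times 3$ block consists entirely of $1$'s, and Proposition~\ref{prop:support-of-incidence-matrix} identifies this block as the decategorified mediated pattern, which is item (2).

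Finally, taking the characteristic function (Definition~\ref{def:binary-incidence-function}) yields exactly \eqref{eq:three-node-incidence-matrix-general}. By Remark~\ref{rem:indexing-convention-incidence}, any other ordering changes the matrix only by a simultaneous permutation of rows and columns. The only step requiring care is the vanishing of the bulk self-entry: it must be justified by noting that neither defining clause of Definition~\ref{def:basic-functorial-coupling-relation} produces the pair $(v_{\mathrm{bulk}},v_{\mathrm{bulk}})$, and that $\rightsquigarrow_{\mathrm{med}}$ is supported on $V_\Sigma\times V_\Sigma$. Everything else is bookkeeping.
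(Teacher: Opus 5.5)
Your proposal is correct and follows essentially the same route as the paper: the bulk row and column come from the three pairs $(\Phi_i,\Psi_i)$, the all-ones upper-left block comes from the nine composites $\Psi_j\circ\Phi_i$, and the result is the characteristic matrix on the four-element extended vertex set. Your explicit justification of the $0$ in the $(4,4)$ entry is a welcome detail that the paper's short proof leaves implicit: neither clause of the basic relation produces $(v_{\mathrm{bulk}},v_{\mathrm{bulk}})$, and the mediated relation lives on $V_\Sigma\times V_\Sigma$.
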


\begin{proof}
The direct bulk/localized entries are forced by the three pairs of attachment functors $(\Phi_i,\Psi_i)$, while the localized-localized entries are exactly the characteristic values of the nine possible mediated composites $\Psi_j\circ\Phi_i$. Since the extended vertex set has four elements, the incidence matrix is $4\times 4$ and has the claimed block form.
\end{proof}

\begin{remark}
\label{rem:three-node-pattern}
The three-node case makes the general pattern completely transparent. The binary incidence matrix naturally decomposes into a localized-localized mediated block and a bulk framing row and column. This is the finite interaction pattern that later papers will refine by introducing stability and BPS data.
\end{remark}

\subsection{General finite-node pattern}

The preceding examples display the general shape of the current work output. Let
\[
\Sigma=\{p_1,\dots,p_r\}
\]
be the finite node set. Then:
\begin{enumerate}
\item the state-data package is
\[
A_\Sigma=
\bigl(
\{v_1,\dots,v_r\},
\bigoplus_{k=1}^r \Q e_k,
(c_1,\dots,c_r)
\bigr);
\]
\item the extended vertex set is
\[
V_\Sigma^{\mathrm{ext}}:=\{v_1,\dots,v_r,v_{\mathrm{bulk}}\};
\]
\item the direct bulk/localized coupling relation contributes the ordered pairs
\[
(v_i,v_{\mathrm{bulk}}),
\qquad
(v_{\mathrm{bulk}},v_i),
\qquad
1\le i\le r;
\]
\item the mediated relation is determined by the composites
\[
\Psi_j\circ\Phi_i:\mathcal C_{p_i}\to \mathcal C_{p_j},
\qquad
1\le i,j\le r;
\]
\item after choosing an ordering of the extended vertex set, the incidence matrix takes the framed block form
\begin{equation}\label{eq:general-framed-incidence-matrix}
I_\Sigma=
\begin{pmatrix}
\chi_{11} & \cdots & \chi_{1r} & 1\\
\vdots & \ddots & \vdots & \vdots\\
\chi_{r1} & \cdots & \chi_{rr} & 1\\
1 & \cdots & 1 & 0
\end{pmatrix},
\end{equation}
where each entry \(\chi_{ij}\in\{0,1\}\) records the presence or absence of the mediated coupling \(v_i\rightsquigarrow_{\mathrm{med}}v_j\). Under the present mediated-relation convention, one has \(\chi_{ij}=1\) for all \(1\le i,j\le r\).
\end{enumerate}

Thus the general quiver-theoretic package has the form
\[
\mathfrak Q_\Sigma=
\bigl(
V_\Sigma,
E_\Sigma,
c_\Sigma,
\mathcal F_\Sigma,
I_\Sigma
\bigr),
\]
with \(I_\Sigma\) of framed binary block type as in \eqref{eq:general-framed-incidence-matrix}.

\begin{proposition}\label{prop:general-framed-pattern}
For every finite-node conifold degeneration with \(|\Sigma|=r\), the binary incidence matrix \(I_\Sigma\) is of framed block type: an \(r\times r\) mediated block, together with a distinguished bulk row and bulk column encoding the direct bulk/localized couplings.
\end{proposition}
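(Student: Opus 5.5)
The plan is to read off the block structure of \(I_\Sigma\) directly from the decomposition of the total coupling relation, with no new input beyond the results already established. First, fix the ordering \(\{v_1,\dots,v_r,v_{\mathrm{bulk}}\}\) of Definition~\ref{def:binary-incidence-matrix}, with \(v_{r+1}:=v_{\mathrm{bulk}}\). By Lemma~\ref{lem:finiteness-extended-vertex-set}, \(I_\Sigma\) is an \((r+1)\times(r+1)\) matrix. It therefore splits as a set-theoretic partition of index pairs into three parts:
\begin{enumerate}
\item the localized block \(\{1,\dots,r\}^2\);
\item the bulk row and column \(\{(\alpha,r+1),(r+1,\beta):\alpha,\beta\le r\}\);
\item the corner \((r+1,r+1)\).
\end{enumerate}
By Remark~\ref{rem:indexing-convention-incidence}, any other ordering changes \(I_\Sigma\) only by a simultaneous permutation of rows and columns. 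Such a permutation preserves the notion of a distinguished bulk row and column, so the block-type statement is ordering-independent.

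Next I will identify each block using Proposition~\ref{prop:decomposition-incidence-package} and Proposition~\ref{prop:support-of-incidence-matrix}.
\begin{enumerate}
\item \emph{Localized block.} By Lemma~\ref{lem:localized-restriction-incidence}, the restriction of \(\rightsquigarrow_\Sigma\) to \(V_\Sigma\times V_\Sigma\) is \(\rightsquigarrow_{\mathrm{med}}\). So the upper-left \(r\times r\) block is the characteristic matrix \((\chi_{ij})\) of the mediated relation. Lemma~\ref{lem:explicit-form-mediated-relation} moreover gives \(\chi_{ij}=1\) for all \(i,j\).
\item \emph{Bulk row and column.} By Lemma~\ref{lem:bulk-localized-restriction-incidence}, the pairs involving \(v_{\mathrm{bulk}}\) are exactly those of \(\rightsquigarrow_{\mathrm{bulk}}\). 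Lemma~\ref{lem:finiteness-basic-coupling} lists these as precisely \((v_k,v_{\mathrm{bulk}})\) and \((v_{\mathrm{bulk}},v_k)\) for \(1\le k\le r\). Hence every off-corner entry of the last row and column equals \(1\). This is the direct bulk/localized coupling encoded by \(\Phi_k,\Psi_k\).
\end{enumerate}

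The one step needing care is the corner entry \(\chi_{r+1,r+1}=0\). Here I will argue that \((v_{\mathrm{bulk}},v_{\mathrm{bulk}})\notin\rightsquigarrow_\Sigma\), as follows:
\begin{enumerate}
\item \(\rightsquigarrow_{\mathrm{med}}\subseteq V_\Sigma\times V_\Sigma\), and \(v_{\mathrm{bulk}}\notin V_\Sigma\) by the disjointness in Proposition~\ref{prop:extended-vertex-decomposition}.
\item The explicit list of Lemma~\ref{lem:finiteness-basic-coupling} shows that every pair in \(\rightsquigarrow_{\mathrm{bulk}}\) has exactly one localized endpoint, so the diagonal bulk pair is excluded.
\end{enumerate}
Assembling the three blocks yields exactly the form \eqref{eq:general-framed-incidence-matrix}, which is the claimed framed block type. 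This holds uniformly for every \(r\), and is consistent with the cases \(r=1,2,3\) computed above.
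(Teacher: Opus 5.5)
Your proposal is correct and follows the paper's route. The paper's proof just says the framed block form is immediate from the construction of \(\rightsquigarrow_{\mathrm{bulk}}\) and \(\rightsquigarrow_{\mathrm{med}}\) together with the definition of \(I_\Sigma\) as the characteristic matrix of their union, and your argument unpacks exactly this through the restriction lemmas; your explicit checks of the corner entry \((v_{\mathrm{bulk}},v_{\mathrm{bulk}})\) and of ordering independence are details the paper leaves implicit, not a different approach.
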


\begin{proof}
This follows immediately from the construction of the direct bulk/localized coupling relation and the mediated coupling relation, together with the definition of the binary incidence matrix as the characteristic matrix of their union.
\end{proof}

\begin{remark}
\label{rem:examples-role-partII}
The examples above do not yet assign weighted arrow multiplicities. Their role is to make completely explicit the finite binary interaction layer extracted in the present work. This is precisely the level of structure proved in the present paper and exactly the input needed for the later stability, BPS-spectrum, and wall-crossing developments.
\end{remark}

\section{Conclusion}

The purpose of the present paper was to extract, from the finite-node schober realization of the degeneration, the interaction and incidence layer that lies between the algebraic state-data package of \cite{RahmanQuiverDataI} and the later stability/BPS formalism. Starting from the finite-node input data
\[
A_\Sigma:=\bigl(V_\Sigma,E_\Sigma,c_\Sigma\bigr),
\qquad
S_\Sigma:=\Bigl(\mathcal C_{\mathrm{bulk}},\{\mathcal C_{p_k}\}_{k=1}^r,\{\Phi_k,\Psi_k\}_{k=1}^r,Sh(S_\Sigma)\Bigr),
\]
we constructed the functorial coupling relation, the functorial incidence package
\[
\mathfrak{I}_\Sigma:=\bigl(V_\Sigma^{\mathrm{ext}},\rightsquigarrow_\Sigma\bigr),
\]
the decategorified incidence package
\[
\mathcal I_\Sigma:=\bigl(V_\Sigma^{\mathrm{ext}},I_\Sigma\bigr),
\]
and the resulting finite quiver-theoretic package
\[
\mathfrak Q_\Sigma:=\bigl(V_\Sigma,E_\Sigma,c_\Sigma,\mathcal F_\Sigma,I_\Sigma\bigr),
\qquad
\mathcal F_\Sigma:=\{(\Phi_k,\Psi_k)\}_{k=1}^r.
\]
We proved that this package is canonically determined by the finite-node schober datum, compatible with the corrected perverse extension and its mixed-Hodge-module refinement, and intrinsic under the appropriate notion of equivalence of finite-node schober realizations.

Thus the present paper provides the interaction and incidence layer required for the next stage of the sequence. Later work will refine the interaction layer of \(\mathfrak Q_\Sigma\) and equip it with stability data, BPS sectors, BPS indices, and wall-crossing structure.

%
%
\appendix

\section{On binary decategorification and weighted refinements}
\label{app:binary-decategorification}

The purpose of this appendix is to justify the binary decategorification adopted in Section~\ref{sec:decategorified-incidence-data}, to state precisely what information it retains and what information it discards, and to formulate the mathematical conditions under which a later weighted refinement could replace it. The point is not merely expository. The passage
\[
\mathfrak{I}_\Sigma:=\bigl(V_\Sigma^{\mathrm{ext}},\rightsquigarrow_\Sigma\bigr)
\quad\leadsto\quad
I_\Sigma\in M_{r+1}(\{0,1\})
\]
is a mathematically consequential choice: it yields a canonical and canonical incidence object, but it deliberately stops short of assigning weighted interaction numbers. The present appendix explains why this is the correct choice at the level of the theorem package established in the body of the paper.

\subsection{The binary decategorification}

Recall from Section~\ref{sec:functorial-incidence-package} that the finite-node schober package determines the functorial incidence package
\[
\mathfrak{I}_\Sigma:=\bigl(V_\Sigma^{\mathrm{ext}},\rightsquigarrow_\Sigma\bigr),
\qquad
V_\Sigma^{\mathrm{ext}}:=V_\Sigma\sqcup\{v_{\mathrm{bulk}}\}.
\]
The decategorification adopted in Section~\ref{sec:decategorified-incidence-data} is the characteristic function of the relation \(\rightsquigarrow_\Sigma\).

\begin{definition}[binary decategorification]
\label{def:binary-decategorification-appendix}
The \emph{binary decategorification} of the functorial incidence package
\[
\mathfrak{I}_\Sigma:=\bigl(V_\Sigma^{\mathrm{ext}},\rightsquigarrow_\Sigma\bigr)
\]
is the characteristic function
\[
\chi_\Sigma:V_\Sigma^{\mathrm{ext}}\times V_\Sigma^{\mathrm{ext}}\to \{0,1\}
\]
defined by
\begin{equation}\label{eq:appendix-binary-characteristic}
\chi_\Sigma(u,v):=
\begin{cases}
1,&\text{if }u\rightsquigarrow_\Sigma v,\\[2mm]
0,&\text{if }u\not\rightsquigarrow_\Sigma v.
\end{cases}
\end{equation}
After choosing an ordering of \(V_\Sigma^{\mathrm{ext}}\), the associated matrix
\[
I_\Sigma:=\bigl(\chi_\Sigma(u_\alpha,u_\beta)\bigr)
\]
is the \emph{binary incidence matrix}.
\end{definition}

\begin{remark}
\label{rem:binary-means-support-only}
Definition~\ref{def:binary-decategorification-appendix} should be read literally. The binary incidence matrix records only the support of the coupling pattern:
\[
\text{present}
\qquad\text{or}\qquad
\text{absent}.
\]
It does not yet record weighted interaction numbers, numerical arrow multiplicities, Euler characteristics, ranks on \(K_0\), or skew pairings.
\end{remark}

\begin{proposition}[canonicality of the binary decategorification]
\label{prop:canonicality-binary-decategorification}
The binary decategorification of Definition~\ref{def:binary-decategorification-appendix} is canonically determined by the finite-node schober package. In particular, it involves no auxiliary choices beyond the ordering convention used to display the matrix \(I_\Sigma\).
\end{proposition}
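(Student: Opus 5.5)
The plan is to reduce the statement to Theorem~\ref{thm:canonical-functorial-incidence-package} and to the elementary fact that a subset of a finite set determines a unique characteristic function. First, I will recall that $S_\Sigma$ canonically determines $\mathfrak I_\Sigma=(V_\Sigma^{\mathrm{ext}},\rightsquigarrow_\Sigma)$. The vertex set is fixed by Proposition~\ref{prop:extended-vertex-package-canonical}. The relation is the union $\rightsquigarrow_{\mathrm{bulk}}\cup\rightsquigarrow_{\mathrm{med}}$. By Lemma~\ref{lem:finiteness-basic-coupling}, the first piece consists of exactly the $2r$ pairs $(v_k,v_{\mathrm{bulk}})$ and $(v_{\mathrm{bulk}},v_k)$. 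By Lemma~\ref{lem:explicit-form-mediated-relation}, the second piece is all of $V_\Sigma\times V_\Sigma$. Neither step involves any choice: both depend only on which attachment functors occur in $S_\Sigma$ and on their source–target pattern.

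Second, I will observe that the assignment sending a relation $R\subseteq V\times V$ to its indicator function $\mathbf 1_R$ is a bijection between subsets of $V\times V$ and $\{0,1\}$-valued functions on it. It requires no extra data. Lemma~\ref{lem:well-defined-binary-incidence-function} then shows that $\chi_\Sigma$ is uniquely defined from $\rightsquigarrow_\Sigma$. By contrast, a weighted refinement would need extra input (ranks, dimensions, Euler characteristics), and none of that enters \eqref{eq:appendix-binary-characteristic}.

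Third, I will handle the matrix presentation. An ordering of $V_\Sigma^{\mathrm{ext}}$ is a bijection $\sigma:\{1,\dots,r+1\}\to V_\Sigma^{\mathrm{ext}}$. Two orderings differ by a permutation $\tau$, and then $I_\Sigma^{\sigma\tau}=P_\tau^{\mathsf T}I_\Sigma^{\sigma}P_\tau$. So the ordering affects only the display, as noted in Remark~\ref{rem:indexing-convention-incidence}. The convention $v_{r+1}=v_{\mathrm{bulk}}$ of Definition~\ref{def:binary-incidence-matrix} even gives a preferred ordering up to permuting the node labels. That residual freedom is exactly the relabelling of $\Sigma$.

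The main obstacle is making ``canonical'' precise enough to check. I would do this by showing naturality under equivalence. If $S_\Sigma\simeq T_\Sigma$, Theorem~\ref{thm:invariance-functorial-incidence-package} identifies the two relations. The indicator construction commutes with that identification of vertex sets, so Proposition~\ref{prop:invariance-binary-incidence-matrix} gives equality of the two $\chi_\Sigma$. This naturality statement is what upgrades ``defined without choices'' to ``canonical''.
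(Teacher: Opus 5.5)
Your proposal is correct and follows essentially the same route as the paper: the relation $\rightsquigarrow_\Sigma$ is canonical by Theorem~\ref{thm:canonical-functorial-incidence-package}, the characteristic function of a given relation is uniquely determined, and reordering $V_\Sigma^{\mathrm{ext}}$ only conjugates $I_\Sigma$ by a permutation matrix. Your final naturality-under-equivalence step is not needed for this statement, since the paper proves it separately as Proposition~\ref{prop:appendix-binary-invariance}, though your argument for it via Theorem~\ref{thm:invariance-functorial-incidence-package} is sound and non-circular.
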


\begin{proof}
By Theorem~\ref{thm:canonical-functorial-incidence-package}, the finite-node schober package canonically determines the relation
\[
\rightsquigarrow_\Sigma\subseteq V_\Sigma^{\mathrm{ext}}\times V_\Sigma^{\mathrm{ext}}.
\]
The characteristic function of a specified relation is uniquely determined. Hence the function \(\chi_\Sigma\) is canonical. The only non-canonical step is the choice of ordering of the finite set \(V_\Sigma^{\mathrm{ext}}\) when one writes \(\chi_\Sigma\) in matrix form. Different orderings change \(I_\Sigma\) only by simultaneous permutation of rows and columns.
\end{proof}

\begin{proposition}[invariance under equivalence]
\label{prop:appendix-binary-invariance}
Let \(S_\Sigma\) and \(T_\Sigma\) be equivalent finite-node schober realizations in the sense of Section~\ref{sec:invariance-under-equivalence}. Then the corresponding binary incidence matrices agree up to simultaneous permutation of rows and columns. Equivalently, the binary decategorification is intrinsic to the finite-node schober equivalence class.
\end{proposition}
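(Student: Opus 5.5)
The plan is to reduce the statement to the invariance results already proved in Section~\ref{sec:invariance-under-equivalence} and to the canonicality of the characteristic-function construction in Proposition~\ref{prop:canonicality-binary-decategorification}. First, fix equivalent finite-node schober realizations \(S_\Sigma\) and \(T_\Sigma\) in the sense of Definition~\ref{def:equivalent-finite-node-schober-realizations}. Both are indexed by the same node set \(\Sigma=\{p_1,\dots,p_r\}\). Hence both produce the same extended vertex set \(V_\Sigma^{\mathrm{ext}}=V_\Sigma\sqcup\{v_{\mathrm{bulk}}\}\), and this identification is canonical: it sends \(v_k\mapsto v_k\), and it sends \(v_{\mathrm{bulk}}\) to \(v_{\mathrm{bulk}}\) via the bulk equivalence \(\mathcal C_{\mathrm{bulk}}\simeq\mathcal D_{\mathrm{bulk}}\).

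Second, I will invoke Theorem~\ref{thm:invariance-functorial-incidence-package}. Under this identification it gives \(\rightsquigarrow_\Sigma^{S}=\rightsquigarrow_\Sigma^{T}\) as subsets of \(V_\Sigma^{\mathrm{ext}}\times V_\Sigma^{\mathrm{ext}}\). Equality of the relations gives equality of their characteristic functions, so \(\chi_\Sigma^S=\chi_\Sigma^T\) pointwise by Definition~\ref{def:binary-decategorification-appendix}. This step is purely set-theoretic.

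Third, I will pass to matrices. If the same ordering of \(V_\Sigma^{\mathrm{ext}}\) is used for both realizations, then \(I_\Sigma^S=I_\Sigma^T\) on the nose. If different orderings are used, say given by bijections \(\{1,\dots,r+1\}\to V_\Sigma^{\mathrm{ext}}\) differing by a permutation \(\sigma\), then \(I_\Sigma^T=P_\sigma I_\Sigma^S P_\sigma^{-1}\). Here \(P_\sigma\) is the permutation matrix of \(\sigma\), so the two matrices differ only by a simultaneous permutation of rows and columns. This is exactly the content of Proposition~\ref{prop:invariance-binary-incidence-matrix}, and I would simply cite it. The ``equivalently'' clause then follows: the function \(\chi_\Sigma\), and hence the matrix up to simultaneous permutation, depends only on the equivalence class of the schober realization.

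The main point requiring care is the first step: checking that the vertex identification really is canonical, so that \(v_{\mathrm{bulk}}\) cannot be swapped with a node vertex. I would handle this by noting that the equivalence matches bulk category to bulk category and \(\mathcal C_{p_k}\) to \(\mathcal D_{p_k}\) for each \(k\) (Remark~\ref{rem:equivalent-schober-realizations-unpacked}). Consequently the induced bijection on \(V_\Sigma^{\mathrm{ext}}\) is the identity in the labels, and the only residual ambiguity is the display ordering.
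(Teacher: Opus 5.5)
Your proposal is correct and follows essentially the same route as the paper: you invoke Theorem~\ref{thm:invariance-functorial-incidence-package} to identify the two relations under the canonical identification of $V_\Sigma^{\mathrm{ext}}$, pass to characteristic functions, and observe that different display orderings only conjugate the matrix by a permutation. Your extra check that $v_{\mathrm{bulk}}$ cannot be swapped with a node vertex is a harmless elaboration of the ``canonical identification induced by the common node set'' that the paper's proof uses implicitly.
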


\begin{proof}
By Theorem~\ref{thm:invariance-functorial-incidence-package}, equivalent finite-node schober realizations determine the same functorial incidence package
\[
\mathfrak{I}_\Sigma:=\bigl(V_\Sigma^{\mathrm{ext}},\rightsquigarrow_\Sigma\bigr)
\]
up to the canonical identification induced by the common node set. The characteristic function of a relation is invariant under such an identification, and passing to matrix form changes only by simultaneous permutation of rows and columns.
\end{proof}

\subsection{What the binary choice retains}

The binary decategorification retains exactly the following data.

\begin{proposition}[support-level information retained]
\label{prop:binary-retains-support}
The binary incidence matrix \(I_\Sigma\) retains:
\begin{enumerate}
\item the direct bulk/localized coupling pattern determined by the attachment functors
\[
\Phi_k:\mathcal C_{p_k}\to \mathcal C_{\mathrm{bulk}},
\qquad
\Psi_k:\mathcal C_{\mathrm{bulk}}\to \mathcal C_{p_k};
\]
\item the mediated localized-to-localized coupling pattern determined by the composites
\[
\Psi_j\circ\Phi_i:\mathcal C_{p_i}\to \mathcal C_{p_j};
\]
\item the finite support of the interaction law on the extended vertex set
\[
V_\Sigma^{\mathrm{ext}}:=V_\Sigma\sqcup\{v_{\mathrm{bulk}}\}.
\]
\end{enumerate}
\end{proposition}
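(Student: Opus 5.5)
The plan is to read off the three retained pieces of information directly from the definition of $I_\Sigma$ as the characteristic matrix of $\rightsquigarrow_\Sigma$. I will use the support decomposition already established in Proposition~\ref{prop:support-of-incidence-matrix} and Proposition~\ref{prop:decomposition-incidence-package}. The key observation is that a relation on a finite set is recoverable from its characteristic function: for $(u,v)\in V_\Sigma^{\mathrm{ext}}\times V_\Sigma^{\mathrm{ext}}$ one has $u\rightsquigarrow_\Sigma v$ if and only if $\chi_\Sigma(u,v)=1$. Hence $\rightsquigarrow_\Sigma$ is exactly the preimage $\chi_\Sigma^{-1}(1)$, and everything encoded in the relation is retained by $I_\Sigma$, once the fixed ordering of Definition~\ref{def:binary-incidence-matrix} is used to pass between $\chi_\Sigma$ and its matrix.

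For item (1), I will restrict $\chi_\Sigma^{-1}(1)$ to pairs involving $v_{\mathrm{bulk}}$, that is, to the bulk row and bulk column of $I_\Sigma$. By Lemma~\ref{lem:bulk-localized-restriction-incidence} this restriction equals $\rightsquigarrow_{\mathrm{bulk}}$. By Lemma~\ref{lem:finiteness-basic-coupling} and Definition~\ref{def:basic-functorial-coupling-relation}, that relation is precisely the set of source--target pairs of the functors $\Phi_k$ and $\Psi_k$. For item (2), I will restrict to the localized block $V_\Sigma\times V_\Sigma$. Lemma~\ref{lem:localized-restriction-incidence} identifies this restriction with $\rightsquigarrow_{\mathrm{med}}$, which by Definition~\ref{def:mediated-coupling-relation} is indexed by the composites $\Psi_j\circ\Phi_i$. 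For item (3), I will combine the two restrictions. Since $V_\Sigma^{\mathrm{ext}}\times V_\Sigma^{\mathrm{ext}}$ is the disjoint union of the localized block and the pairs involving $v_{\mathrm{bulk}}$, the full support $\chi_\Sigma^{-1}(1)=\rightsquigarrow_\Sigma$ is recovered. This support is finite by Lemma~\ref{lem:finiteness-binary-incidence-function}.

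The only point needing care is to state precisely what ``retains'' means. I will take it to mean that each listed pattern, viewed as a subset of ordered vertex pairs, is a function of $I_\Sigma$ together with the ordering convention. The argument then reduces to the recoverability observation above. I expect no substantial obstacle beyond being explicit that only the support of the pattern is retained, not the functors $\Phi_k$, $\Psi_k$ or the composites $\Psi_j\circ\Phi_i$ themselves. That distinction is consistent with Remark~\ref{rem:binary-means-support-only}.
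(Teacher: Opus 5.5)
Your proposal is correct and matches the paper's argument: since \(I_\Sigma\) is the characteristic matrix of \(\rightsquigarrow_\Sigma\), it records exactly which ordered pairs of \(V_\Sigma^{\mathrm{ext}}\times V_\Sigma^{\mathrm{ext}}\) lie in \(\rightsquigarrow_{\mathrm{bulk}}\) or in \(\rightsquigarrow_{\mathrm{med}}\). Your version spells out, through the two restriction lemmas and the disjointness of the supports, how the bulk row and column and the localized block separate items (1) and (2); the paper's one-line proof leaves this step implicit.
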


\begin{proof}
Each matrix entry of \(I_\Sigma\) is by definition the characteristic value of the corresponding coupling relation. Therefore \(I_\Sigma\) records exactly which ordered pairs in
\[
V_\Sigma^{\mathrm{ext}}\times V_\Sigma^{\mathrm{ext}}
\]
belong to the direct bulk/localized coupling relation or to the mediated relation. This is precisely the support of the interaction pattern.
\end{proof}

\begin{remark}
\label{rem:binary-is-topology-of-coupling}
The mathematical content of Proposition~\ref{prop:binary-retains-support} may be summarized informally as follows:
\[
\text{binary incidence}
=
\text{support of the coupling pattern}.
\]
That is, the binary package is the topology of the interaction law, not yet its full numerical strength.
\end{remark}

\subsection{What the binary choice discards}

The cost of binary decategorification is the loss of quantitative information.

\begin{proposition}[information discarded by binary decategorification]
\label{prop:binary-discards-weights}
The binary incidence matrix \(I_\Sigma\) does not, by itself, determine:
\begin{enumerate}
\item the number of independent interaction channels between two vertices;
\item the rank of an induced map on a Grothendieck group \(K_0\);
\item the dimension of a linearized image attached to a composite functor;
\item the Euler characteristic of a Hom-complex;
\item a skew-symmetric or signed interaction pairing;
\item numerical arrow multiplicities in a weighted quiver.
\end{enumerate}
\end{proposition}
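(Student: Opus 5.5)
The natural strategy is a non-injectivity argument. Each item in Proposition~\ref{prop:binary-discards-weights} is a numerical quantity that could depend on the internal structure of the functors $\Phi_k,\Psi_k$ and the composites $\Psi_j\circ\Phi_i$. So the claim amounts to this: there is no function of $I_\Sigma$ alone that recovers these quantities for all admissible coupling data. I would first show that $I_\Sigma$ is rigid: by Lemma~\ref{lem:finiteness-basic-coupling}, Lemma~\ref{lem:explicit-form-mediated-relation} and Proposition~\ref{prop:general-framed-pattern}, the matrix is completely determined by $r$, namely the framed block matrix \eqref{eq:general-framed-incidence-matrix} with all $\chi_{ij}=1$. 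Hence $I_\Sigma$ factors through the single integer $r=|\Sigma|$, and any quantity determined by $I_\Sigma$ is a function of $r$ only.

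The second step is to show that each listed quantity is not a function of $r$ alone. For this I would exhibit, for fixed $r$ (already $r=1$ suffices), two functorial coupling data with the same source-target pattern but different numerical invariants. I would work at the level of linear algebra, where the decategorified quantities live. Replace the categories by finite-dimensional $\Q$-linear models, say $K_0$-type lattices $\Q^{n_{\mathrm{bulk}}}$ and $\Q^{n_k}$. Let $[\Phi_k]$ and $[\Psi_k]$ be linear maps, so that $[\Psi_j\circ\Phi_i]=[\Psi_j][\Phi_i]$. Choosing $[\Phi_1]$ to be zero in one instance and an isomorphism in another changes the rank of the induced map on $K_0$ (item~2) and the dimension of the linearized image (item~3). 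It does not change the support relation, because by Definition~\ref{def:mediated-coupling-relation} the relation records only composability, not nonvanishing. Similarly, the number of independent channels, Euler characteristics of Hom-complexes, signed pairings and arrow multiplicities (items~1, 4, 5, 6) can be varied by varying dimensions, gradings or orientations of the models, while the binary relation stays fixed.

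The key observation is that composability does not see any of these quantities. Since $\chi_\Sigma$ is by Definition~\ref{def:binary-decategorification-appendix} the characteristic function of a relation fixed by source-target data alone, two data with equal source-target patterns yield equal $I_\Sigma$ but different values of each invariant. This proves that none of the invariants is determined by $I_\Sigma$.

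The main obstacle is legitimacy of the test data. The paper's schober package is specific to the degeneration, not an arbitrary functor family, so one cannot freely vary $\Phi_k$ within a fixed geometry. I would resolve this by reading ``does not, by itself, determine'' as a statement about the map from abstract coupling data to $I_\Sigma$: the binary decategorification is defined for any datum of the shape in Definition~\ref{def:partII-functorial-coupling-datum}, and non-injectivity there suffices. If a geometric statement is demanded, I would fall back on comparing configurations with the same $r$ whose local categorical sectors differ in rank or grading. That comparison requires input from \cite{RahmanMultiNodeSchoberPaper} which is not available here, so I would flag it as the weaker point of the argument.
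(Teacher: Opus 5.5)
Your proposal is correct and follows essentially the same route as the paper: its proof is the same non-injectivity observation, that each entry of $I_\Sigma$ lies in $\{0,1\}$, so categorically distinct situations with the same support pattern give the same matrix and cannot be distinguished by channel counts or numerical decategorifications. Two things you add go beyond it. Your step showing that $I_\Sigma$ is forced to be the framed all-ones matrix determined by $r$ alone, together with your explicit linear-algebra test data, sharpens that argument. The legitimacy caveat you flag applies equally to the paper's proof, which also never constructs such distinct situations inside a fixed geometry.
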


\begin{proof}
By construction, each entry of \(I_\Sigma\) takes only the values \(0\) or \(1\). Therefore two categorically distinct situations with the same support pattern yield the same binary entry. In particular, the matrix does not distinguish between one interaction channel and several, between weak and strong interactions, or between different possible numerical decategorifications of the same relation.
\end{proof}

\begin{remark}
\label{rem:binary-is-coarse}
Proposition~\ref{prop:binary-discards-weights} explains why the binary package is structurally correct but quantitatively coarse. It is the minimal rigorous interaction object available from the theorem package proved in the paper, but it is not yet the full numerical interaction law one would ultimately want for stability, BPS multiplicities, or wall-crossing coefficients.
\end{remark}

\subsection{Weighted refinements: abstract framework}

We now formulate the abstract notion of a weighted refinement of the binary incidence matrix.

\begin{definition}[weighted refinement]
\label{def:weighted-refinement}
A \emph{weighted refinement} of the binary incidence matrix \(I_\Sigma\) consists of a matrix
\[
I_\Sigma^{\mathrm{wt}}:=\bigl(w_{\alpha\beta}\bigr)
\]
with entries in \(\mathbf Z_{\ge 0}\), \(\mathbf Z\), or \(\mathbf Q\), together with a rule assigning to each ordered pair of vertices a weight
\[
w_{\alpha\beta}:=\mathcal J_{\alpha\beta},
\]
such that:
\begin{enumerate}
\item \(w_{\alpha\beta}\) is defined from the finite-node schober interaction data by a canonically specified invariant;
\item \(I_\Sigma^{\mathrm{wt}}\) is invariant under equivalence of finite-node schober realizations;
\item the support of \(I_\Sigma^{\mathrm{wt}}\) agrees with the binary incidence matrix in the sense that
\begin{equation}\label{eq:weighted-support-condition}
w_{\alpha\beta}=0
\quad\Longleftrightarrow\quad
I_\Sigma(\alpha,\beta)=0.
\end{equation}
\end{enumerate}
\end{definition}

\begin{remark}
\label{rem:weighted-refinement-purpose}
Condition \eqref{eq:weighted-support-condition} says that the binary package should be the support of any later weighted refinement. Thus the philosophy is
\[
I_\Sigma^{(0/1)}\subset I_\Sigma^{\mathrm{wt}},
\]
where the first matrix records where interactions are allowed and the second records how strong or how numerous those interactions are.
\end{remark}

\begin{proposition}[support of a weighted refinement]
\label{prop:support-of-weighted-refinement}
Let \(I_\Sigma^{\mathrm{wt}}\) be a weighted refinement in the sense of Definition~\ref{def:weighted-refinement}. Then the binary incidence matrix is recovered from \(I_\Sigma^{\mathrm{wt}}\) by the support rule
\begin{equation}\label{eq:recover-binary-from-weighted}
I_\Sigma(\alpha,\beta)=
\begin{cases}
1,&\text{if }w_{\alpha\beta}\neq 0,\\[2mm]
0,&\text{if }w_{\alpha\beta}=0.
\end{cases}
\end{equation}
\end{proposition}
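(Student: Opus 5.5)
The argument is a direct entrywise comparison of two $\{0,1\}$-valued functions on the same finite index set $V_\Sigma^{\mathrm{ext}}\times V_\Sigma^{\mathrm{ext}}$. I would fix the ordering of $V_\Sigma^{\mathrm{ext}}$ used to display both $I_\Sigma$ (Definition~\ref{def:binary-incidence-matrix}) and $I_\Sigma^{\mathrm{wt}}$. Both matrices are then indexed by the same pairs $(\alpha,\beta)$ with $1\le\alpha,\beta\le r+1$. Let $\sigma_{\alpha\beta}$ denote the right-hand side of \eqref{eq:recover-binary-from-weighted}: it equals $1$ if $w_{\alpha\beta}\neq 0$ and $0$ otherwise. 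The claim is $\sigma_{\alpha\beta}=I_\Sigma(\alpha,\beta)$ for every pair.

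First, $I_\Sigma$ takes values in $\{0,1\}$, since it is the characteristic matrix of $\rightsquigarrow_\Sigma$ (Definitions~\ref{def:binary-incidence-function} and~\ref{def:binary-incidence-matrix}). So for each pair exactly one of $I_\Sigma(\alpha,\beta)=0$ or $I_\Sigma(\alpha,\beta)=1$ holds. Second, I would split into two cases using the support condition \eqref{eq:weighted-support-condition} of Definition~\ref{def:weighted-refinement}.
\begin{enumerate}
\item If $w_{\alpha\beta}=0$, the forward implication of \eqref{eq:weighted-support-condition} gives $I_\Sigma(\alpha,\beta)=0=\sigma_{\alpha\beta}$.
\item If $w_{\alpha\beta}\neq 0$, the contrapositive of the reverse implication gives $I_\Sigma(\alpha,\beta)\neq 0$. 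Since the entry lies in $\{0,1\}$, it equals $1=\sigma_{\alpha\beta}$.
\end{enumerate}
The two cases exhaust all possibilities, so the matrices agree entrywise.

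The only point needing care is that the comparison uses the same ordering for both matrices. If the weighted matrix is presented in a different ordering, I would invoke Remark~\ref{rem:indexing-convention-incidence} and Proposition~\ref{prop:canonicality-binary-decategorification}: a change of ordering acts by the same simultaneous row–column permutation on both matrices, and the support rule commutes with such permutations. This is bookkeeping rather than a real obstacle. The substance is just that a biconditional, together with the fact that $I_\Sigma$ is $\{0,1\}$-valued, determines $I_\Sigma$ from the zero pattern of $w$.
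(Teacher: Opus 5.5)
Your proof is correct and follows the paper's argument: the paper simply calls the support rule a restatement of the biconditional \eqref{eq:weighted-support-condition}. You have spelled out the one step it leaves implicit, namely that $I_\Sigma$ is $\{0,1\}$-valued, so $I_\Sigma(\alpha,\beta)\neq 0$ forces $I_\Sigma(\alpha,\beta)=1$.
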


\begin{proof}
This is just a restatement of the support condition \eqref{eq:weighted-support-condition}.
\end{proof}

\subsection{When a weighted refinement becomes available}

The binary decategorification can be replaced only after a specific invariant has been chosen and justified.

\begin{proposition}[criterion for replacing the binary package]
\label{prop:criterion-for-weighted-replacement}
Suppose there exists a canonically chosen invariant \(\mathcal J\) assigning to each relevant direct or mediated coupling a weight
\[
w_{ij}:=\mathcal J(\Psi_j\circ\Phi_i)
\]
or, in the bulk/localized case, a corresponding weight attached to \(\Phi_i\) or \(\Psi_i\), such that:
\begin{enumerate}
\item \(w_{ij}\) is well-defined;
\item \(w_{ij}\) is invariant under finite-node schober equivalence;
\item \(w_{ij}\) is compatible with the corrected perverse shadow and the state-data package of \cite{RahmanQuiverDataI};
\item the support condition \eqref{eq:weighted-support-condition} holds.
\end{enumerate}
Then the binary incidence matrix \(I_\Sigma\) may be replaced by the weighted incidence matrix
\[
I_\Sigma^{\mathrm{wt}}:=\bigl(w_{\alpha\beta}\bigr),
\]
and the resulting weighted package is a refinement of the binary one.
\end{proposition}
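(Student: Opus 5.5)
The plan is to verify directly the three requirements of a refinement of the binary package: that \(I_\Sigma^{\mathrm{wt}}\) is a well-defined matrix on \(V_\Sigma^{\mathrm{ext}}\), that it has the structural properties the paper demands of incidence data (canonicity and invariance), and that it recovers \(I_\Sigma\).

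First, hypothesis (1) assigns a well-defined weight \(w_{ij}=\mathcal J(\Psi_j\circ\Phi_i)\) to each localized pair and a weight \(w_{i,\mathrm{bulk}}\), \(w_{\mathrm{bulk},i}\) to each bulk/localized pair. For the remaining entry \((v_{\mathrm{bulk}},v_{\mathrm{bulk}})\), which carries no coupling by Lemma~\ref{lem:bulk-localized-restriction-incidence}, we set the weight to \(0\). Ordering \(V_\Sigma^{\mathrm{ext}}\) as in Definition~\ref{def:binary-incidence-matrix} then gives an \((r+1)\times(r+1)\) matrix \(I_\Sigma^{\mathrm{wt}}=(w_{\alpha\beta})\). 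A change of ordering acts by simultaneous row-column permutation, exactly as in Remark~\ref{rem:indexing-convention-incidence}. The weights are defined from the schober interaction data through the chosen invariant \(\mathcal J\), so condition (1) of Definition~\ref{def:weighted-refinement} holds.

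Second, invariance under equivalence, condition (2) of Definition~\ref{def:weighted-refinement}, follows from hypothesis (2). Equivalences of finite-node schober realizations identify the vertex sets canonically via \(\Sigma\), as in Theorem~\ref{thm:invariance-functorial-incidence-package}, and \(\mathcal J\) is unchanged under them. Hypothesis (3) records that the weighted layer is compatible with the shadow \(Sh(S_\Sigma)\cong\mathcal P\) and with \(A_\Sigma\). Hence replacing \(I_\Sigma\) by \(I_\Sigma^{\mathrm{wt}}\) in \(\mathfrak Q_\Sigma\) preserves Theorem~\ref{thm:compatibility-with-partI-data}, whose proof never used the values of the incidence matrix beyond their indexing.

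Third, the refinement property is the support condition. By hypothesis (4), \eqref{eq:weighted-support-condition} holds entrywise, so Proposition~\ref{prop:support-of-weighted-refinement} recovers \(I_\Sigma\) from \(I_\Sigma^{\mathrm{wt}}\) by the support rule \eqref{eq:recover-binary-from-weighted}. The binary package is therefore a function of the weighted one, which is what "refinement" means here.

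The main obstacle is the bulk-bulk entry and its consistency with the support condition. We must check that the convention \(w_{\mathrm{bulk},\mathrm{bulk}}=0\) matches \(I_\Sigma(r+1,r+1)=0\). This holds because \(\rightsquigarrow_\Sigma\) contains no pair \((v_{\mathrm{bulk}},v_{\mathrm{bulk}})\), by Definition~\ref{def:total-functorial-coupling-relation}. The remaining entries are governed by hypothesis (4), so the support condition holds on all of \(V_\Sigma^{\mathrm{ext}}\times V_\Sigma^{\mathrm{ext}}\). Note also that, since \(\rightsquigarrow_{\mathrm{med}}=V_\Sigma\times V_\Sigma\) by Lemma~\ref{lem:explicit-form-mediated-relation}, hypothesis (4) forces every \(w_{ij}\neq0\).
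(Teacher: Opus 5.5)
Your proposal is correct and takes the same approach as the paper, which reads off that \(I_\Sigma^{\mathrm{wt}}\) is well-defined and intrinsic from hypotheses (1)--(2), that it stays compatible with the finite-node architecture from (3), and that \(I_\Sigma\) is recovered as its support from (4). You also make explicit two points the paper leaves implicit: the bulk--bulk entry, which \(\mathcal J\) does not assign and which must be \(0\) because \((v_{\mathrm{bulk}},v_{\mathrm{bulk}})\notin\rightsquigarrow_\Sigma\), and the fact that (4) together with \(\rightsquigarrow_{\mathrm{med}}=V_\Sigma\times V_\Sigma\) forces every \(w_{ij}\neq 0\).
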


\begin{proof}
Under hypotheses (1) and (2), the matrix \(I_\Sigma^{\mathrm{wt}}\) is a well-defined and intrinsic algebraic object. Hypothesis (3) ensures that it remains tied to the finite-node architecture already established in \cite{RahmanQuiverDataI} and the present work. Hypothesis (4) implies that the binary matrix \(I_\Sigma\) is exactly the support of \(I_\Sigma^{\mathrm{wt}}\). Therefore the weighted matrix is a legitimate refinement of the binary package.
\end{proof}

\begin{remark}
\label{rem:moment-of-refinement}
Proposition~\ref{prop:criterion-for-weighted-replacement} identifies the exact mathematical moment at which the binary package can be replaced: it occurs when one has chosen and justified a specific decategorification invariant \(\mathcal J\) satisfying the four conditions above. Before that moment, the binary package is the correct stopping point.
\end{remark}

\subsection{Candidate sources of weighted invariants}

The present paper does not choose a weighted invariant. Nevertheless, it is useful to record the most natural candidates.

\begin{remark}
\label{rem:possible-weighted-invariants}
Among the natural candidates for a later invariant \(\mathcal J\) are:
\begin{enumerate}
\item \emph{Grothendieck-group decategorification:} if the categories involved admit sufficiently controlled Grothendieck groups, one may try to define \(w_{ij}\) from the induced maps
\[
K_0(\mathcal C_{p_i})\to K_0(\mathcal C_{\mathrm{bulk}}),
\qquad
K_0(\mathcal C_{\mathrm{bulk}})\to K_0(\mathcal C_{p_j}),
\]
or from the rank of the resulting composite;
\item \emph{Hom/Euler-type decategorification:} one may try to define
\[
w_{ij}:=\dim \mathcal H_{ij}
\qquad\text{or}\qquad
w_{ij}:=\chi(\mathcal H_{ij})
\]
for a canonically attached Hom-space or Hom-complex associated with the composite \(\Psi_j\circ\Phi_i\);
\item \emph{Geometric/Hodge-theoretic bridge invariants:} one may try to tie the composite \(\Psi_j\circ\Phi_i\) back to the nodewise extension channels, the coefficient vector \(c_\Sigma\), or a Hodge-theoretic quantity attached to the corresponding local-to-global coupling.
\end{enumerate}
Any such choice would require a separate theorem package proving well-definedness, equivalence invariance, and compatibility with the corrected perverse shadow.
\end{remark}

\begin{remark}
\label{rem:binary-not-final}
The binary package should therefore be viewed as the first rigorous floor rather than the final interaction law. Its role is to provide the support of the future weighted interaction theory. Later work may refine
\[
I_\Sigma\in M_{r+1}(\{0,1\})
\]
to a richer matrix
\[
I_\Sigma^{\mathrm{wt}},
\]
but only after the relevant invariant has been mathematically justified.
\end{remark}

\subsection{Binary choice}

We summarize the logic of the binary choice.

\begin{theorem}[status of the binary choice]
\label{thm:status-of-binary-choice}
At the level of the theorem package established in the present paper, the binary incidence matrix
\[
I_\Sigma\in M_{r+1}(\{0,1\})
\]
is the unique canonical decategorified interaction object forced by the finite-node schober package. It is canonical, intrinsic under schober equivalence, and sufficient to support the quiver-theoretic assembly of the present work. Any weighted refinement requires additional structure beyond what has presently been proved.
\end{theorem}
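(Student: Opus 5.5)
The statement bundles four assertions about $I_\Sigma$. It is canonical. It is intrinsic under schober equivalence. It suffices for the quiver-theoretic assembly. And it is the \emph{unique} canonical decategorified interaction object at the present level, in the sense that any weighted refinement needs extra input. I will prove them in that order. The first three are assembled directly from earlier results; the uniqueness claim needs a precise formulation before it can be proved.

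\textbf{Canonicity.} This is Proposition~\ref{prop:canonicality-binary-decategorification}. It rests on Theorem~\ref{thm:canonical-functorial-incidence-package}, which shows that $S_\Sigma$ determines the relation $\rightsquigarrow_\Sigma$, and hence its characteristic function $\chi_\Sigma$. The matrix $I_\Sigma$ is then fixed up to simultaneous permutation of rows and columns (Remark~\ref{rem:indexing-convention-incidence}).

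\textbf{Intrinsicity.} This is Proposition~\ref{prop:appendix-binary-invariance}, which in turn rests on Theorem~\ref{thm:invariance-functorial-incidence-package} and Proposition~\ref{prop:invariance-binary-incidence-matrix}.

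\textbf{Sufficiency.} $I_\Sigma$ is exactly the fifth component of $\mathfrak Q_\Sigma$ in Definition~\ref{def:finite-node-quiver-theoretic-package}. Theorem~\ref{thm:canonical-quiver-theoretic-package} shows that this component is determined once $I_\Sigma$ is, so no further numerical input is needed for the assembly.

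\textbf{Uniqueness.} I would make this precise as follows. Call a decategorified interaction object \emph{forced by the present theorem package} if its value at each pair $(u,v)$ is a function only of the data the package actually supplies at that pair. That data is the relation $\rightsquigarrow_\Sigma$, i.e.\ whether $(u,v)\in\rightsquigarrow_\Sigma$, together with the vertex labels. The claim to prove is then that any such object factors through $\chi_\Sigma$, and that among $\{0,1\}$-valued objects whose support is $\rightsquigarrow_\Sigma$ it coincides with $\chi_\Sigma$.

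The argument has three steps.
\begin{enumerate}
\item Show that the package specifies no invariant of the functors $\Phi_k,\Psi_k,\Psi_j\circ\Phi_i$ beyond their existence. By Definitions~\ref{def:basic-functorial-coupling-relation} and~\ref{def:mediated-coupling-relation}, both relations are defined purely by existence and composability.
\item Conclude that any canonical equivalence-invariant numerical assignment is a function $f(\chi_\Sigma(u,v))$.
\item Normalize. Requiring $f(0)=0$ (absent channels carry no weight) and binary values with the correct support leaves only $f=\mathrm{id}$, so the object is $I_\Sigma$.
\end{enumerate}

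Any genuinely weighted refinement, in the sense of Definition~\ref{def:weighted-refinement}, must assign values $w_{\alpha\beta}$ that are not a function of $\chi_\Sigma$ alone. Such values require a specified invariant $\mathcal J$ as in Proposition~\ref{prop:criterion-for-weighted-replacement}, and no such invariant is part of the present data. Proposition~\ref{prop:binary-discards-weights} confirms that this information is absent from $I_\Sigma$. This gives the final sentence of the theorem.

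\textbf{Main obstacle.} The hardest step is the uniqueness claim. Read literally, it is only as strong as the formalization of ``forced by the theorem package''. I would therefore state that hypothesis explicitly as a factorization condition through $\rightsquigarrow_\Sigma$, and prove uniqueness relative to it rather than as an absolute statement. In particular, I would note that the relabeling $f(1)=c\neq 1$ is excluded only by the $\{0,1\}$ normalization.
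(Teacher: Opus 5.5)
You follow the paper's proof exactly on canonicity, intrinsicity, sufficiency and the final sentence. That proof just cites Propositions~\ref{prop:canonicality-binary-decategorification} and~\ref{prop:appendix-binary-invariance}, Sections~\ref{sec:decategorified-incidence-data} and~\ref{sec:quiver-theoretic-package}, and Proposition~\ref{prop:criterion-for-weighted-replacement}.

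The real difference is that you try to prove the word \emph{unique}, which the paper's proof never addresses. Making the hypothesis explicit is an improvement, but your step~2 does not follow from it. You let the value at $(u,v)$ depend on the vertex labels, and then conclude that it factors through $\chi_\Sigma(u,v)$ alone. Once labels are admitted, $w(u,v)=(1+\delta_{uv})\,\chi_\Sigma(u,v)$ is a counterexample, as is any rule weighting bulk pairs differently from mediated pairs. Such rules are canonical, invariant under schober equivalence, and supported exactly on $\rightsquigarrow_\Sigma$, yet are not of the form $f(\chi_\Sigma(u,v))$. Step~2 is also unnecessary. Once you require $\{0,1\}$ values and support $\rightsquigarrow_\Sigma$, the object equals $\chi_\Sigma$ by definition. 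So uniqueness is a tautology relative to that normalization, and you should state it as such.

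The same examples break your route to the last sentence. They are not functions of $\chi_\Sigma$ alone, yet they require no invariant $\mathcal J$ of the functors. So ``not a function of $\chi_\Sigma$'' does not imply ``needs an invariant $\mathcal J$''. Proposition~\ref{prop:criterion-for-weighted-replacement}, which both you and the paper invoke, is only a sufficiency criterion and cannot supply necessity either. The defensible reading, for you and for the paper, rests on Definitions~\ref{def:basic-functorial-coupling-relation} and~\ref{def:mediated-coupling-relation}. These record only the existence and composability of $\Phi_k$, $\Psi_k$ and $\Psi_j\circ\Phi_i$. Hence any weight that reflects those functors, rather than merely the combinatorial type of the pair, must come from a new invariant.
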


\begin{proof}
The canonicality and intrinsicity statements follow from Propositions~\ref{prop:canonicality-binary-decategorification} and \ref{prop:appendix-binary-invariance}. The fact that \(I_\Sigma\) supports the quiver-theoretic assembly is exactly the content of Sections~\ref{sec:decategorified-incidence-data} and \ref{sec:quiver-theoretic-package}. The final statement follows from Proposition~\ref{prop:criterion-for-weighted-replacement}.
\end{proof}

\begin{remark}
\label{rem:appendix-final-interpretation}
The binary choice is therefore mathematically conservative but conceptually precise. It keeps the interaction layer fully rigorous at the present stage, while also making explicit that the eventual BPS/quiver dynamics will require a later weighted refinement. In this sense, the binary incidence matrix is the support of the future interaction law, not its final form.
\end{remark}

%
%
\printbibliography
\end{document}